\def\thesection{\arabic{section}}
\def\theequation{\thesection.\arabic{equation}}
\numberwithin{equation}{section}
\def\theequation{\@arabic{\c@section}.\@arabic{\c@equation}}
\def\QED{\hfill {$\square$}\goodbreak \medskip}
\newtheorem{Theorem}{Theorem}[section]
\newtheorem{Lemma}[Theorem]{Lemma}
\newtheorem{Remark}{Remark}
\newtheorem{Definition}{Definition}[section]
\def\Xint#1{\mathchoice
	{\XXint\displaystyle\textstyle{#1}}%
	{\XXint\textstyle\scriptstyle{#1}}%
	{\XXint\scriptstyle\scriptscriptstyle{#1}}%
	{\XXint\scriptscriptstyle\scriptscriptstyle{#1}}%
	\!\int}
\def\XXint#1#2#3{{\setbox0=\hbox{$#1{#2#3}{\int}$ }
		\vcenter{\hbox{$#2#3$ }}\kern-.6\wd0}}
\newcommand{\Author}{Sun-Sig Byun, Kyeongbae Kim \and Deepak Kumar}
\begin{document}
\title[Nonlocal double phase with VMO coefficients]
	{Regularity results for a class of nonlocal double phase equations with VMO coefficients}
\address{Sun-Sig Byun: Department of Mathematical Sciences and Research Institute of Mathematics, Seoul National University, Seoul 08826, Korea}
\email{byun@snu.ac.kr}

\address{Kyeongbae Kim: Department of Mathematical Sciences, Seoul National University, Seoul 08826, Korea}
\email{kkba6611@snu.ac.kr}

\address{Deepak Kumar: Research Institute of Mathematics, Seoul National University, Seoul 08826, Korea}
\email{deepak.kr0894@gmail.com}

\thanks{S. Byun was supported by NRF-2022R1A2C1009312, K. Kim was supported by  NRF-2017R1A2B2003877 and D. Kumar was supported by NRF-2021R1A4A1027378.}
\author{\Author}
	
	\date{}
	\maketitle

\begin{abstract}
We study a class of nonlocal double phase problems with discontinuous coefficients. A local self-improving property and a higher H\"older continuity result for weak solutions to such problems are obtained under the assumptions that the associated coefficient functions are of type VMO (vanishing mean oscillation) and that the principal coefficient depends not only on the variables but also on the solution itself. 
 \medskip
		
\noindent \textbf{Key words:} Nonlocal double phase operators, Self-improving property, VMO coefficients,  higher H\"older regularity results.	
\medskip
		
 \noindent \textit{2010 Mathematics Subject Classification:} 35B65, 35J60, 35R11.
\end{abstract}

\section {Introduction}
In this article, we consider the following nonlocal problem:
	\begin{equation*}
		\mathcal{L}_{a(\cdot,u),b} \; u   = f \; \; \text{ in } \Omega, \tag{$\mathcal{P}$}\label{probM}
	\end{equation*}
 where $\Omega\subset\mathbb{R}^N$ is a bounded domain with $N\geq2$, $f\in L^\gamma_{\mathrm{loc}}(\Omega)$ with $\gamma>\max\{1,\frac{N}{ps}\}$ and the nonlocal operator $\mathcal{L}_{a(\cdot,),b}$ is defined as 
	\begin{align*}
		\mathcal{L}_{a(\cdot,u),b} \; u(x):=&2 \; {\mathrm P.V.} \int_{\mathbb R^N} a(x,y,u(x),u(y)) \frac{|u(x)-u(y)|^{p-2}(u(x)-u(y))}{|x-y|^{N+ps}}dy \nonumber \\
		&+2 \; {\mathrm P.V.} \int_{\mathbb R^N} b(x,y) \frac{|u(x)-u(y)|^{q-2}(u(x)-u(y))}{|x-y|^{N+qt}}dy, \quad\mbox{for }x\in\mathbb{R}^N,
	\end{align*}
with $1<p\leq q<\infty$,  $0<s, t<1$ and the kernel coefficients $a(\cdot,\cdot,\cdot,\cdot)$ and $b(\cdot,\cdot)$ are non-negative bounded functions. We will specify structural and regularity assumptions to be imposed on $a$ and $b$ later in the introduction.  
 Specifically, the nonlocal operator in this work is motivated by the double phase equations and quasi-linear equations for local cases, we refer to \cite{zhikov1,marce2,baroni,colombo,colombo2,defilip-Ming} and \cite{Ph,BPS} for each type of problem, respectively.\par 
 The primary objective of the paper is to establish a local self-improving property and a higher H\"older regularity result for weak solutions to a class of nonlocal double phase problems with possibly discontinuous coefficients and a leading kernel coefficient depending not only on the independent variables but also on the solution. Particularly, we assume that the kernel coefficients are of type VMO (Vanishing Mean Oscillation). We establish the self-improving property of local weak solutions that are locally in an appropriate fractional Sobolev space which extends the results of \cite{kuusi2} and \cite{SM}. To the best of our knowledge, there is no result to deal with a self-improving property of local weak solutions with nonzero boundary data. In this regard, our result gives a new path to treat a local weak solution with nonzero boundary data concerning a nonlocal Calder\'on-Zygmund theory as in \cite{Ni,NV}.  We also complement the results of higher H\"older continuity for local weak solutions of \cite{brasco} and \cite{JDSPDPP} to the case of more general kernel coefficients.\par

We now briefly mention some recent regularity results on nonlocal problems. In the case when the kernel coefficient $a(\cdot,\cdot)$ is independent of the solution and $b\equiv 0$; i.e., the fractional $p$-Laplacian type equations, Di Castro, Kuusi and Palatucci \cite{CKPf} proved the local H\"older regularity. Subsequently, for the case when the coefficients $a\equiv 1$ and $b\equiv 0$, Brasco, Lindgren and Schikorra \cite{brasco} obtained a higher H\"older regularity for the weak solutions to the problem \eqref{probM} for the super-quadratic case. For $p=2$, Nowak in \cite{NHH} established a similar result as in \cite{brasco} for problems involving irregular kernel coefficients. For additional regularity results of nonlocal equations, we refer to \cite{Fr,CSP,Cr,KH1,silvestre,caff-silv} and references therein. \par
Concerning the nonlocal double phase type problems, we refer to \cite{filippis} for H\"older regularity results for bounded viscosity solutions to the problem \eqref{probM} for the case when $a$ is independent of the solution and $qt\leq ps$. Later, Fang and Zhang in \cite{zhang}, and Byun, Ok and Song in \cite{BOS} obtained the H\"older continuity results for weak solutions to a similar problem when $tq\leq ps$ and $ps<tq$ (with the coefficient $b$ being H\"older continuous in the latter), respectively. Recently, Giacomoni, Kumar and Sreenadh in \cite{JDSPDPP} obtained higher H\"older continuity results with explicit H\"older exponent for weak solutions to the problem \eqref{probM} for the case $qt\leq ps$ and the coefficient $b$ being locally continuous only along the diagonals in $\Omega\times\Omega$. For some other regularity results of problems with non-standard growth nonlocal operators, we refer to \cite{BKO,chaker,defilip-ming-mix,GKS, GKS2}.\par
Regarding a self-improving property of weak solutions to the nonlocal equations, Kuusi, Mingione and Sire \cite{kuusi2} proved this property for fractional Laplacian type problems with linear growth by introducing a notion of dual pairs. Subsequently, Scott and Mengesha \cite{SM} extended this result to bounded weak solutions of \eqref{probM} with $a(x,y,u(x),u(y))=a(x,y)$ when $\frac{p-1}{p}\leq \frac{tq}{sp}\leq 1$. On the other hand, in \cite{ABES, Scf}, the authors employed different techniques such as functional analysis and harmonic analysis tools to obtain similar self-improving properties. Moreover,  we refer to \cite{MSY,FMSY,Ni,NV,BL} for Sobolev regularity results for nonlocal problems involving fractional Laplacian  type operators (or their nonlinear versions).
\par
 
Motivated by the above discussion, in this article, we consider the problem \eqref{probM} with  the coefficient functions $a:\mathbb{R}^N\times\mathbb{R}^N\times\mathbb{R}\times\mathbb{R}\to\mathbb{R}$ and $b:\mathbb{R}^N\times\mathbb{R}^N\to \mathbb{R}$ satisfying the following:
\begin{itemize}
 \item[(A1)] the functions $a$ and $b$ are symmetric; that is, $a(x,y,z,w)=a(y,x,w,z)$ and $b(x,y)=b(y,x)$ for all $x,y\in\mathbb R^N$ and $z,w\in\mathbb R$;
 \item[(A2)] for all $x,y\in\mathbb{R}^N$ and $z,w\in \mathbb{R}$, there hold
		\begin{align}
			&0<\Lambda^{-1} \leq a(x,y,z,w)\leq \Lambda  \quad\mbox{and } \label{eqbdA}\\
			&0 \leq b(x,y)\leq \Lambda; \label{eqbdB}
		\end{align}
	\item[(A3)] the function $a$ is locally uniformly continuous in $\mathbb{R}^N\times\mathbb{R}^N\times\mathbb{R}\times\mathbb{R}$; that is, for any $M>0$, there is a non-decreasing function $\omega_{a,M}:[0,\infty)\rightarrow[0,\infty)$ with $\omega_{a,M}(0)=0$ and $\lim\limits_{t\downarrow 0}\omega_{a,M}(t)=0$ such that
		\begin{align*}
			|a(x,y,w,z)-a(x,y,w',z')|\leq\omega_{a,M}\Big(\frac{|w-w'|+|z-z'|}{2}\Big)
		\end{align*}
		for all $z,z',w,w'\in [-M,M]$ uniformly in $(x,y)\in\mathbb{R}^N\times\mathbb{R}^N$;
	\item[(A4)] the function $a(\cdot,\cdot,z,w)$ is in VMO on $\Omega\times\Omega$ locally uniformly in $z,w$ and the function $b(\cdot,\cdot)$ is in VMO on $\Omega\times\Omega$, in the sense of Definition \ref{defVMO} (see below).
	\end{itemize}
To prove our higher H\"older continuity result, we first obtain a self-improving identity for the weak solution to the problem \eqref{probM} much in the spirit of \cite{kuusi2} and \cite{SM}. It is worth mentioning that, unlike the previously mentioned works, for our case, solutions considered here are assumed to be locally bounded and locally in an appropriate fractional Sobolev space (see later in Section 2). However, this requires careful handling of the nonlocal tail terms. More precisely, we replace the notion of the standard nonlocal tail with a refined version as in \eqref{simv.n.tail} (see below) so that only the local behavior of the solution with respect to the fractional Sobolev space is taken into account. As a consequence, our self-improving identity holds for all $tq\leq ps$  without requiring any lower bound on the quantity $\frac{tq}{sp}$, unlike in \cite[(A2)]{SM}. Subsequently, we use a suitable approximation technique for $VMO$ coefficients to establish an appropriate comparison result, which finally yields the H\"older continuity estimates for weak solutions to the problem \eqref{probM}.\par
For the sake of completeness, we prove the existence of a weak solution to the problem \eqref{probM} with prescribed exterior data (see problem \eqref{genPrb}). For this, we use the theory of $M$-type operators (as described in \cite[Chapter II]{showalter}) defined on a suitable separable reflexive Banach space.
The main difficulty in this regard lies in the lack of monotonocity caused by the fact that the kernel coefficient $a$ depends on the solution. 
\noindent

Before introducing our main results, we give a definition of a local weak solution. See Section 2 for a precise definition of the terms involved.
\begin{Definition}[Local weak solution]
Let $f\in (\mathcal W(\Omega))^*$, then we say that $u\in\mathcal{W}_{\mathrm{loc}}(\Omega)\cap L^{p-1}_{ps}(\mathbb{R}^N)\cap L^{q-1}_{qt,b}(\mathbb{R}^N)$ is a local weak solution to the problem \eqref{probM} if for all $\phi\in \mathcal W(\Omega)$ with compact support contained in $\Omega$, there holds
	\begin{align}\label{eqWF}
	  &\int_{\mathbb R^N}\int_{\mathbb R^N}a(x,y,u(x),u(y)) \frac{[u(x)-u(y)]^{p-1}}{|x-y|^{N+ps}}(\phi(x)-\phi(y))dxdy \nonumber\\
	  &\quad+\int_{\mathbb R^N}\int_{\mathbb R^N}b(x,y) \frac{[u(x)-u(y)]^{q-1}}{|x-y|^{N+qt}}(\phi(x)-\phi(y))dxdy \nonumber \\
	   &=   \langle f,\phi\rangle_{\mathcal W, \mathcal W^*}.
		\end{align}
Local weak sub-solution (resp. super-solution) is defined similarly by  replacing the sign $``=$'' with $``\leq$ (resp. $\geq$)" in \eqref{eqWF} for all non-negative test functions.
\end{Definition}
We now introduce our main results.
The first one is the following local self-improving property of a weak solution to \eqref{probM}.
\begin{Theorem}[A priori estimate]\label{Main theorem1}
Suppose that $2\leq p\leq q\leq\frac{ps}{t}$  and that the assumptions (A1) and (A2) hold. Let $u\in W^{s,p}_{\mathrm{loc}}(\Omega)\cap L^{\infty}_{\mathrm{loc}}(\Omega)\cap L^{p-1}_{ps}(\mathbb{R}^{N})\cap L^{q-1}_{qt}(\mathbb{R}^{N})$ be a local weak solution to \eqref{probM} with the non-homogeneous term satisfying
	\begin{equation}
		\label{nonhomogeneous condition for SI}
		f\in L_{\mathrm{loc}}^{p_{*}+\delta_{0}}(\Omega) \quad\mbox{for some small } \delta_{0}>0,
	\end{equation}
where
\begin{equation*}
p_{*}=\frac{Np'}{N+sp'}\text{ if }sp<N\quad\text{and}\quad p_{*}=1 \text{ if }sp\geq N.
\end{equation*} 
Then, for all $\widetilde\Omega\Subset\Omega$, there is a constant $\delta=\delta(N,s,t,p,q,\Lambda,\delta_{0},\|u\|_{L^{\infty}(\widetilde\Omega)})>0$ such that $u\in W^{s+\frac{N\delta}{p(1+\delta)},p(1+\delta)}_{\mathrm{loc}}(\widetilde\Omega).$
In particular, there exists a constant $c$ depending only on $N,s,t,p,q,\Lambda,\delta_{0}$ and $\|u\|_{L^{\infty}(B_{2\rho_{0}}(x_{0}))}$ such that 
\begin{equation}
\label{self improving estimate}
\begin{aligned}
    &\left(\int_{B_{\frac{\rho_{0}}{2}}(x_{0})}\Xint-_{B_{\frac{\rho_{0}}{2}}(x_{0})}\left(\frac{|u(x)-u(y)|^{p}}{|x-y|^{N+ps}}\right)^{(1+\delta)}dxdy\right)^{\frac{p-1}{p(1+\delta)}}\\
    &\leq c\Bigg[\left(\int_{B_{2\rho_{0}}(x_{0})}\Xint-_{B_{2\rho_{0}}(x_{0})}\frac{|u(x)-u(y)|^{p}}{|x-y|^{N+ps}}dxdy\right)^{\frac{1}{p'}}+\rho_{0}^{s}\left(\Xint-_{B_{2\rho_{0}(x_{0})}}|f(x)|^{p_{*}+\delta_{0}}dx\right)^{\frac{1}{p_{*}+\delta_{0}}}\\
    &\qquad\qquad\qquad+\rho_{0}^{s-tq}T^{q-1}_{tq}(u;x_{0},2\rho_{0})+\rho_{0}^{-s(p-1)}T^{p-1}_{ps}(u;x_{0},2\rho_{0})+\rho_{0}^{-s(p-1)}\Bigg]
\end{aligned}
\end{equation}
whenever $B_{2\rho_{0}}(x_{0})\Subset\widetilde{\Omega}$ with $\rho_{0}\in(0,1]$.
\end{Theorem}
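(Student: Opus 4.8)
The plan is to run the \emph{dual pairs}/reverse H\"older scheme of Kuusi--Mingione--Sire \cite{kuusi2} and Scott--Mengesha \cite{SM}, with two modifications forced by the present hypotheses: I would encode the far behaviour of $u$ only through the refined tails $T^{p-1}_{ps}(u;x_0,\cdot)$, $T^{q-1}_{tq}(u;x_0,\cdot)$, so that only the \emph{local} $W^{s,p}$-energy of $u$ ever enters; and I would use Lebesgue measure $dx\,dy$ on $\mathbb R^N\times\mathbb R^N$ as the reference measure, which is what upgrades the Gehring improvement to a gain of differentiability and not merely of integrability. Fixing $B_{2\rho_0}(x_0)\Subset\widetilde\Omega$ with $\rho_0\in(0,1]$ and $M:=\|u\|_{L^\infty(B_{2\rho_0}(x_0))}$, I would regard
\[
G(x,y):=\frac{|u(x)-u(y)|^{p}}{|x-y|^{N+ps}}
\]
as a density with respect to $dx\,dy$: by hypothesis $\iint_{B\times B}G\,dx\,dy=[u]_{W^{s,p}(B)}^{p}<\infty$, so $G\in L^1_{\mathrm{loc}}$, and since $\iint_{B\times B}G^{1+\delta}\,dx\,dy=\iint_{B\times B}\frac{|u(x)-u(y)|^{p(1+\delta)}}{|x-y|^{(N+ps)(1+\delta)}}\,dx\,dy$ is finite precisely when $u\in W^{s+\frac{N\delta}{p(1+\delta)},p(1+\delta)}(B)$, it suffices to prove \eqref{self improving estimate}; the qualitative $W^{s+\frac{N\delta}{p(1+\delta)},p(1+\delta)}_{\mathrm{loc}}(\widetilde\Omega)$ statement then follows by covering $\widetilde\Omega$ with such balls.

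First I would derive a Caccioppoli inequality. Given concentric $B_r\subset B_{2r}\subset B_{2\rho_0}$ and $\eta\in C_c^\infty(B_{3r/2})$ with $\eta\equiv1$ on $B_r$, $0\le\eta\le1$, $|\nabla\eta|\le c/r$, I would test \eqref{eqWF} with $\phi=(u-(u)_{B_{2r}})\eta^{\beta}$ for a large $\beta=\beta(p,q)$. By \eqref{eqbdA} and the standard algebraic inequalities for $[\,\cdot\,]^{p-1}$ (this is where $p\ge2$ is used), the near-diagonal part of the $a$-term controls $c^{-1}\iint_{B_r\times B_r}G$ up to cross terms which, after a Young's-inequality splitting, contribute an absorbable multiple of $\iint\frac{|u(x)-u(y)|^p\eta^{\beta}}{|x-y|^{N+ps}}$ plus a term bounded by $c\,r^{-ps}\int_{B_{2r}}|u-(u)_{B_{2r}}|^p\,dx$ (here one uses $|\eta(x)^{\beta}-\eta(y)^{\beta}|\le c\min\{1,|x-y|/r\}$ and $s<1$). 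The near-diagonal part of the $b$-term is nonnegative and sits on the good side of the equation, so I would discard it; its cross terms I would dominate by $c(M)$ times the $a$-cross terms, using \eqref{eqbdB}, $|u(x)-u(y)|^{q-1}\le(2M)^{q-p}|u(x)-u(y)|^{p-1}$ on $B_{2r}$, and crucially $tq\le ps$ with $r\le1$. The interactions with $y\notin B_{2r}$ I would handle via $|u(x)-u(y)|^{p-1}\lesssim|u(x)|^{p-1}+|u(y)|^{p-1}$ (and the $q$-analogue), using $u\in L^\infty(B_{2\rho_0})\cap L^{p-1}_{ps}(\mathbb R^N)\cap L^{q-1}_{qt}(\mathbb R^N)$; these collapse into the refined tails, producing exactly the summands $\rho_0^{s-tq}T^{q-1}_{tq}(u;x_0,2\rho_0)$, $\rho_0^{-s(p-1)}T^{p-1}_{ps}(u;x_0,2\rho_0)$ and $\rho_0^{-s(p-1)}$; and I would bound $\langle f,\phi\rangle$ by $\int_{B_{2r}}|f|\,|u-(u)_{B_{2r}}|$.

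Next I would convert this into a self-improving reverse H\"older inequality by applying a fractional Sobolev--Poincar\'e inequality on $B_{2r}$ with a sub-exponent $p_\flat\in(1,p)$ and the correspondingly reduced order $s_\flat<s$ fixed by $N+p_\flat s_\flat=(N+ps)\,p_\flat/p$, so that $\frac{|u(x)-u(y)|^{p_\flat}}{|x-y|^{N+p_\flat s_\flat}}=G^{p_\flat/p}$; here I would take $p_\flat$ close enough to $p$ (in terms of the given $\delta_0>0$) that the Sobolev conjugate of $(p_\flat,s_\flat)$ exceeds both $p$ (to absorb the $u$-average term) and $(p_*+\delta_0)'$ (to handle the $f$-term by H\"older's inequality — this is the r\^ole of the scaling-critical value $p_*=\frac{Np'}{N+sp'}$ when $sp<N$, the case $sp\ge N$ being handled by the embedding into $L^\infty$ or $L^\kappa$ with $p_*=1$). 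Substituting into the Caccioppoli inequality and checking that the $r$-homogeneities balance — which they do precisely because the reference measure is $dx\,dy$ — I would obtain a reverse H\"older inequality with increasing support,
\[
\dashint_{B_r}\dashint_{B_r}G\,dx\,dy\ \le\ c\Big(\dashint_{B_{2r}}\dashint_{B_{2r}}G^{p_\flat/p}\,dx\,dy\Big)^{p/p_\flat}\ +\ c\,\mathcal F(B_{2r})\ +\ c\,\mathcal T,\qquad B_{2r}\Subset B_{2\rho_0},
\]
where $\mathcal F$ is the $f$-contribution, lying in $L^{1+\sigma_0}$ for some $\sigma_0>0$ by \eqref{nonhomogeneous condition for SI}, and $\mathcal T$ the scale-adjusted tail and normalising terms of \eqref{self improving estimate}. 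Since the diagonal product balls $\{B_\sigma(z)\times B_\sigma(z)\}$ form a doubling family for Lebesgue measure on $\mathbb R^N\times\mathbb R^N$ with the Vitali covering property, a Gehring-type lemma for reverse H\"older inequalities with source terms (in the spirit of \cite{kuusi2}) then applies and produces $\delta=\delta(N,s,t,p,q,\Lambda,\delta_0,M)>0$ with $G\in L^{1+\delta}_{\mathrm{loc}}(B_{2\rho_0}\times B_{2\rho_0})$; applying the displayed inequality once more on the right-hand side (together with Jensen's inequality, valid since $p/p_\flat>1$) and raising everything to the power $1/p'$ would give \eqref{self improving estimate}.

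I expect the main obstacle to be twofold. First, the non-local bookkeeping: because $u$ is only locally in $W^{s,p}$ and locally bounded, with no global fractional-Sobolev control, every interaction with the exterior must be shown to collapse into $T^{p-1}_{ps}$, $T^{q-1}_{tq}$ and $\rho_0^{-s(p-1)}$ with exactly the powers of $\rho_0$ displayed in \eqref{self improving estimate}, and it is here that $tq\le ps$ alone — with no lower bound on $tq/(sp)$, unlike \cite[(A2)]{SM} — is genuinely used to tame the $b$-term at small scales. Second, running the Gehring machinery over the product-ball family with \emph{Lebesgue} reference measure and source terms, and tracking that its output is the combined differentiability-and-integrability gain $W^{s+\frac{N\delta}{p(1+\delta)},p(1+\delta)}$ rather than the weaker purely integrability gain $W^{s,p(1+\delta)}$ produced by the $|x-y|^{-N}\,dx\,dy$ normalisation. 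By contrast, I do not expect the dependence of $a$ on the solution to cause trouble: only the two-sided bound \eqref{eqbdA}--\eqref{eqbdB} is used, never the modulus of continuity (A3) or the VMO hypothesis (A4), which enter only in the subsequent higher H\"older-continuity result.
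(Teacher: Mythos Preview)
Your high-level plan (Caccioppoli $\to$ reverse H\"older $\to$ Gehring, using only (A1)--(A2)) matches the paper, and you correctly note that neither (A3) nor (A4) is needed here. But the step where you invoke ``a Gehring-type lemma'' over diagonal product balls with \emph{Lebesgue} reference measure is a genuine gap, and it is exactly the gap that the dual-pair technique of \cite{kuusi2} was designed to close.

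The reverse H\"older inequality you derive holds only on \emph{diagonal} balls $B_r(z)\times B_r(z)$. A Gehring/level-set argument needs to cover the super-level set $\{G>\lambda\}$, and an off-diagonal point $(x,y)$ with $|x-y|\sim d>0$ lies only in diagonal product balls of radius $\gtrsim d$; the Lebesgue average of your $G$ over such a ball is in general \emph{not} comparable to $G(x,y)$, so the exit-time/Vitali step does not yield the required covering. That diagonal product balls are doubling and enjoy the Vitali property for Lebesgue measure on $\mathbb R^{2N}$ is true but not sufficient --- the obstruction is that the reverse H\"older inequality is unavailable on balls centred off the diagonal.

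The paper therefore does \emph{not} work with Lebesgue measure. It introduces the measure $d\mu=|x-y|^{-(N-\epsilon p)}\,dx\,dy$, the function $U(x,y)=|u(x)-u(y)|/|x-y|^{s+\epsilon}$, and $G=(U^p+BU^q)^{1/p'}$; the small parameter $\epsilon>0$ is essential. The off-diagonal contribution to $\{G>\lambda\}$ is then estimated \emph{separately} (Step~3 of Lemma~\ref{level set estimate lemma}, imported from \cite{SM2}), exploiting that for off-diagonal $(x,y)$ one can bound $U$ via oscillations of $u$ over balls in $\mathbb R^N$ centred at $x$ and at $y$ individually --- this is the ``dual'' structure. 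Only after one has $G\in L^{p'+\delta}(d\mu)$ does an embedding (\cite[Proposition~2.5]{Ni}) convert this into the Lebesgue statement $u\in W^{s+N\delta/(p(1+\delta)),\,p(1+\delta)}$.

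A second, smaller point: the tail handling is more delicate than a direct ``collapse into $T^{p-1}_{ps}$ and $T^{q-1}_{qt}$ at scale $2\rho_0$'' at the Caccioppoli stage. In the proof of Lemma~\ref{Reverse holder inequality} the tail at scale $R$ is decomposed over dyadic annuli up to $\rho_0$, producing a weighted sum $\sum_{j}\beta_j^{p-1}\bigl(\Xint-_{2^j\mathcal B}G^{p'\alpha}\,d\mu\bigr)^{1/(p'\alpha)}$ plus the residual $T(u-(u)_{2\rho_0};x_0,2\rho_0)$; this intermediate-scale sum is then absorbed inside the level-set argument (via \eqref{case1 data}--\eqref{case1 tail data}) by choosing the free parameter $\sigma$ small. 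Skipping this localisation would leave a tail term at every scale $R$ that cannot be made small, and the iteration would not close.
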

\begin{Remark}
We observe that if $u\in W^{s,p}(\mathbb{R}^{N})$, then 
\begin{equation*}
    u\in W^{s,p}_{\mathrm{loc}}(\Omega)\cap L^{p-1}_{ps}(\mathbb{R}^{N}),
\end{equation*}but the converse is not true. Therefore, as we pointed out earlier, our result generalizes the previous works in \cite{kuusi2} and \cite{SM}. On the other hand, if we consider the case for $b=0$, then it suffices to take a weak solution 
\begin{equation*}
    u\in W^{s,p}_{\mathrm{loc}}(\Omega)\cap L^{p-1}_{ps}(\mathbb{R}^{N}).
\end{equation*} 
\end{Remark}
We next describe the second main result which is the higher H\"older regularity.
\begin{Theorem}\label{thmmain}
  Suppose that $2\leq p\leq q< \min\{p^*_{s},ps/t\}$. Let the kernel coefficients satisfy the assumptions (A1) through (A4)  and let $u$ be a local weak solution to the problem \eqref{probM} such that $u\in  L^{q-1}_{qt}(\mathbb R^N)$. Then, $u\in C^{0,\alpha}_\mathrm{loc}(\Omega)$ for all $\alpha\in (0,{\Theta})$, where 
		\begin{align}\label{eqTheta}
		   {\Theta}:=\min\bigg\{ \frac{ps-N/\gamma}{p-1},\frac{qt}{q-1},1\bigg\}.
		\end{align}
	\end{Theorem}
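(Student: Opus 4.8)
The plan is to run a Campanato-type perturbation scheme: combine the higher integrability furnished by Theorem~\ref{Main theorem1}, a low-exponent a priori modulus of continuity, and a single comparison with the frozen, homogeneous operator whose sharp interior Hölder regularity is already available. \emph{Step 1 (local boundedness and an initial modulus of continuity).} Under $2\le p\le q<p^*_s$ and $f\in L^\gamma_{\mathrm{loc}}$ with $\gamma>\max\{1,N/(ps)\}$, a De Giorgi--Moser iteration — using the fractional embedding $W^{s,p}\hookrightarrow L^{p^*_s}$, the Caccioppoli inequality for $\mathcal L_{a(\cdot,u),b}$ (valid with the same constants despite the $u$-slot in $a$, since $\Lambda^{-1}\le a\le\Lambda$), and the non-negativity of the $q$-term, which may be discarded on one side — gives $u\in L^\infty_{\mathrm{loc}}(\Omega)$. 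Combined with the qualitative Hölder continuity known for nonlocal double phase equations in the regime $qt\le ps$ (as in \cite{BOS,zhang}), this yields $u\in C^{0,\alpha_0}_{\mathrm{loc}}(\Omega)$ for some small $\alpha_0>0$; in particular $\mathrm{osc}_{B_r(x_0)}u\to 0$ as $r\downarrow0$, which is precisely what makes the freezing of the $u$-slot in $a(x,y,u(x),u(y))$ legitimate. Theorem~\ref{Main theorem1} additionally provides $u\in W^{s+\varepsilon,p(1+\delta)}_{\mathrm{loc}}$, giving the slack in integrability needed for the comparison estimate.

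\emph{Step 2 (one comparison map).} Fix $B_{2\rho}(x_0)\Subset\Omega$, $\rho$ small, set $M:=\|u\|_{L^\infty(B_{2\rho}(x_0))}$, and let $v$ solve the frozen homogeneous problem $\mathcal L_{\bar a,\bar b}v=0$ in $B_\rho(x_0)$ with $v=u$ on $\mathbb R^N\setminus B_\rho(x_0)$, where $\bar b$ is the mean of $b$ over $B_\rho\times B_\rho$, $\bar u_{B_\rho}$ the mean of $u$ over $B_\rho$, and $\bar a:=a(\cdot,\cdot,\bar u_{B_\rho},\bar u_{B_\rho})$ frozen to an average in $(x,y)$ as well. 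Testing the equations for $u$ (with right-hand side $f$) and for $v$ against $u-v$, the left side is bounded below by the fractional double-phase energy of $u-v$ on $B_\rho$ (monotonicity of $z\mapsto|z|^{p-2}z$ and $z\mapsto|z|^{q-2}z$, clean since $p,q\ge2$), while the right side splits into a coefficient-oscillation term and the $f$-term; using
\begin{equation*}
|a(x,y,u(x),u(y))-\bar a|\le \omega_{a,M}\!\big(\mathrm{osc}_{B_\rho}u\big)+\big|a(x,y,\bar u_{B_\rho},\bar u_{B_\rho})-\textstyle\dashint_{B_\rho\times B_\rho}a(\cdot,\cdot,\bar u_{B_\rho},\bar u_{B_\rho})\big|,
\end{equation*}
the assumptions (A3)--(A4), Hölder's inequality for $f\in L^\gamma$ and the fractional Sobolev--Poincaré inequality, one obtains that the double-phase energy of $u-v$ on $B_\rho$ is bounded by $\kappa(\rho)$ times that of $u$ plus $c\,\rho^{\,ps-N/\gamma}\,(1+\text{refined tails of }u)$, with $\kappa(\rho)\to0$ as $\rho\downarrow0$ (the first term of the split is small by Step~1, the VMO pieces by (A4)).

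\emph{Step 3 (frozen regularity, iteration, conclusion).} The equation for $v$ is translation invariant and free of $u$-dependence, so the higher Hölder estimates for the homogeneous fractional $p$-Laplacian \cite{brasco} and for the homogeneous nonlocal double phase operator in the range $qt\le ps$ \cite{JDSPDPP} (applicable since $q<ps/t$) give, for every $\sigma<\min\{\tfrac{ps}{p-1},\tfrac{qt}{q-1},1\}$, a Campanato-type excess decay $\mathcal E(v;\lambda\rho)\le c\,\lambda^{\sigma}\mathcal E(v;\rho)$, where $\mathcal E(\cdot\,;r)$ is built from the $W^{s,p}$-seminorm on $B_r$ suitably normalized by powers of $r$ together with the refined tails $T_{ps},T_{tq}$ appearing in \eqref{self improving estimate}. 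Feeding Step~2 into this via the triangle inequality gives, on all sufficiently small scales,
\begin{equation*}
\mathcal E(u;\lambda\rho)\le c\,\lambda^{\sigma}\,\mathcal E(u;\rho)+c\,\lambda^{-N}\Big(\kappa(\rho)\,\mathcal E(u;\rho)+\rho^{\,(ps-N/\gamma)/(p-1)}\Big).
\end{equation*}
Choosing $\lambda$ small (to beat $\lambda^{\sigma}$ against any target exponent strictly below $\Theta$), then $\rho_0$ small (so $\kappa(\rho)$ is negligible), and iterating by a standard lemma yields $\mathcal E(u;r)\le c\,r^{\alpha}$ for every $\alpha<\Theta$, with $\Theta$ as in \eqref{eqTheta}. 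The fractional Campanato/Morrey characterization of Hölder continuity then converts this into $u\in C^{0,\alpha}_{\mathrm{loc}}(\Omega)$ for all $\alpha\in(0,\Theta)$, as claimed.

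\emph{Main obstacle.} The genuinely delicate point is the simultaneous control of (i) the nonlocal tails: since $u$ is only assumed locally in $W^{s,p}$, every term above must be expressed through the refined tail quantities $T_{ps},T_{tq}$ rather than a global seminorm, and one must verify they scale correctly through the comparison and the iteration; and (ii) the dependence of $a$ on $(u(x),u(y))$, which forces the freezing to wait until the continuity of Step~1 is in hand and requires the composite modulus $\omega_{a,M}(\mathrm{osc}_{B_\rho}u)$ to be balanced against the VMO moduli of $a$ and $b$ so that the perturbation parameter $\kappa(\rho)$ really tends to $0$ uniformly on the relevant scales. The remaining technical ingredient is the double-phase mismatch $qt<ps$, which makes the $q$-term genuinely lower order at small scales and is where the hypotheses $q<ps/t$ and $q<p^*_s$ are used.
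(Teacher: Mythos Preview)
Your overall architecture---initial continuity, freeze and compare, then iterate---matches the paper's, but the mechanisms you choose at each stage are different from what the paper actually does, so this is a genuinely alternative route rather than a reconstruction.

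The paper proceeds by first stripping off the $u$-dependence entirely: once an initial $C^{\sigma}$ modulus is available, it shows (Lemma~\ref{lem VMO u}) that $A(x,y):=a(x,y,u(x),u(y))$ is itself VMO, and then runs the whole argument for the $u$-independent problem $\mathcal L_{A,b}u=f$. For that problem the comparison step (Lemma~\ref{approximation argument}) is not a direct energy estimate but an $L^{\infty}$-closeness statement proved by \emph{compactness and contradiction} (Arzel\`a--Ascoli plus the self-improving property of Theorem~\ref{Main theorem1} to absorb the coefficient-oscillation term), and the iteration (Lemma~\ref{normalized sol ho}) is an $L^{\infty}$-based induction on dyadic scales in which the rescaled tails $T_{ps},T_{qt}$ are tracked explicitly at every step. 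Your proposal instead freezes the $(x,y)$-slot and the $u$-slot simultaneously, bounds the energy of $u-v$ directly, and runs a Campanato excess iteration. The paper's $L^{\infty}$/compactness route avoids having to quantify the decay of a fractional Campanato functional that carries nonlocal tails, at the price of a soft (non-constructive) $\delta$; your energy/Campanato route is more in the classical perturbation spirit and in principle gives explicit dependence on the moduli $\omega_{a,M},\nu_{a,M},\nu_{b}$, but the tail bookkeeping becomes the load-bearing part.

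One point to tighten: you attribute the threshold $qt/(q-1)$ in $\Theta$ to the interior regularity of the frozen homogeneous problem, but in the paper that problem is H\"older up to $\Theta_{0}=\min\{ps/(p-1),1\}$ (Lemma~\ref{thmingenk}); the additional caps $qt/(q-1)$ and $(ps-N/\gamma)/(p-1)$ enter only through the \emph{iteration}---from the scaling of the $T_{qt}$-tail and of $\|f\|_{L^{\gamma}}$, respectively (see the estimates of $J_{t,q}$ and the rescaled $f_i$ in Lemma~\ref{normalized sol ho}). In your scheme this means the $qt/(q-1)$ restriction has to come out of the tail part of your excess $\mathcal E$ under rescaling, not from the decay of $v$ alone; as written your Step~3 does not make this visible, and without it the argument would appear to overshoot $\Theta$.
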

Before ending the section, we mention the layout of the rest of the paper. Section 2 deals with some preliminaries related to the paper. Section 3 corresponds to the self-improving property and we prove Theorem \ref{Main theorem1}. Section 4 contains the proof of higher H\"older regularity result of Theorem \ref{thmmain}. Section 5 deals with the existence result for the problem \eqref{probM}. Finally, the appendix is devoted to some boundedness results.
\section{Preliminaries}
In this section, we give some notations and introduce related function spaces. Here, we will also recall some of the well known results.
\subsection{Notation} 
For $1<p<\infty$, we set $[\xi]^{p-1}=|\xi|^{p-2}\xi$, for all $\xi\in\mathbb R$. We abbreviate  
  $$(-\Delta)_{p,a(\cdot,u)}^{s}u(x):=  2{\mathrm P.V.}\int_{\mathbb R^N} a(x,y,u(x),u(y)) \frac{[u(x)-u(y)]^{p-1}}{|x-y|^{N+ps}}dy, \quad x\in\mathbb{R}^N$$
 and analogously $(-\Delta)^{t}_{q,b}$ is defined. The number $p'$ denotes the H\"older conjugate of $p$; that is, $1/p+1/p'=1$. Additionally,  for $1<p<\infty$ and $s\in (0,1)$, we define the Sobolev conjugate of $p$ by 
 \begin{align*}
     p^*_{s}:=\begin{cases}
     Np/(N-ps) &\mbox{ if } N>ps,\\
     \text{\r{p}} &\mbox{ if } N\leq ps,
     \end{cases} 
 \end{align*}
 where \r{p} is an arbitrarily large number.
  For $x_0\in\mathbb{R}^N$ and $v\in L^1(B_r(x_0))$, we set \[(v)_{r,x_0}:=\Xint-_{B_r(x_0)}v(x)dx=\frac{1}{|B_r(x_0)|}\int_{B_r(x_0)}v(x)dx\]
  and when the center is clear from the context, we will denote it as $(v)_r$.
 In several places, we will use 
	\begin{align*}
		d\mu_{1}(x,y):=\frac{dxdy}{|x-y|^{N+ps}} \quad\mbox{and}\quad	d\mu_{2}(x,y):=\frac{dxdy}{|x-y|^{N+qt}}.
	\end{align*} 
  For a Banach space $(X,\|\cdot\|)$, we denote its topological dual by $X^*$ and $\langle \cdot,\cdot \rangle_{X,X^*}$ denotes the duality pairing. 
The constant $c$ appearing in the proofs may vary line to line and is always greater than or equal to $1$. In particular, we write the relevant dependencies on parameters using parentheses; i.e., $c=c(n,s)$.  On the other hand, we write 
\begin{equation*}
	 \mathsf{data}\equiv \mathsf{data}(N,s,t,p,q,\Lambda).
	\end{equation*}

\subsection{Function spaces and definitions}
 For an open  set $\Omega\subset\mathbb R^N$, we define the space $\mathcal W_b(\Omega)$ as below: 
	\begin{equation*}
		\begin{aligned}
			\mathcal W_b(\Omega) :=\big\{u\in W^{s,p}(\Omega) \ : \ [u]_{W^{t,q}_b(\Omega)}+ \|u\|_{L^q(W_b,\:\Omega)} <\infty  \big \},
		\end{aligned}
	\end{equation*}
	equipped with the norm 
	\begin{align*}
		\|u\|_{\mathcal W_b(\Omega)}:= \|u\|_{L^p(\Omega)}+\|u\|_{L^q(W_b,\:\Omega)}+[u]_{W^{s,p}(\Omega)}+[u]_{W^{t,q}_b(\Omega)},
	\end{align*}
	where
	\begin{align}\label{eqw}
	\|u\|_{L^q(W_b,\:\Omega)}^q:=\int_{\Omega}W_b(x)|u(x)|^q dx \quad \mbox{with }	W_b(x):=\int_{\mathbb R^N\setminus\Omega}\frac{b(x,y)}{|x-y|^{N+qt}}dy
	\end{align}
 and 
 \begin{align*}
    [u]^q_{W^{t,q}_b(\Omega)}:=\int_{\Omega}\int_{\Omega} b(x,y)\frac{|u(x)-u(y)|^q}{|x-y|^{N+tq}}dxdy \quad\mbox{with } [u]_{W^{s,p}(\Omega)}=[u]_{W^{s,p}_1(\Omega)}.
 \end{align*}
Note that $C_c^\infty(\Omega)$ is obviously contained in $\mathcal{W}_b(\Omega)$. It is not difficult to verify that $\mathcal W_b(\Omega)$ is a uniformly convex Banach space.  Moreover, $\mathcal W_b(\Omega)$ is continuously embedded into $W^{s,p}(\Omega)$.\par
In what follows, the subscript $b$ from the definitions of  $\mathcal{W}_b(\Omega)$ and  $W_b$ will be suppressed if it has no relevance to the context. 
  We call a function $u\in \mathcal W_{\mathrm{loc}}(\Omega)$ if $u\in \mathcal W(\widetilde\Omega)$, for all $\widetilde\Omega\Subset\Omega$. 
Now we give definitions of tail space and kernel coefficient.
 \begin{Definition}
  Let $0<m,\alpha<\infty$ and $b(\cdot,\cdot)\in L^\infty(\mathbb R^N\times\mathbb R^N)$ be a non-negative function. Then, we define the tail space as below:
	\begin{align*}
	L^{m}_{\alpha,b}(\mathbb R^N) = \bigg\{ u:\mathbb{R}^{N}\to \mathbb{R}\text{ is measurable function } : \sup_{x\in\mathbb R^N}\int_{\mathbb R^N} b(x,y)\frac{|u(y)|^{m}}{(1+|y|)^{N+\alpha}}dy <\infty \bigg\}.
		\end{align*}
  And for $b(\cdot,\cdot)\equiv 1$, we denote it by $L^m_{\alpha}(\mathbb R^N)$. In particular, we write
  \begin{equation*}
      \|u\|_{L^{m}_{\alpha,b}(\mathbb{R}^{N})}=\left(\sup_{x\in\mathbb R^N}\int_{\mathbb R^N} b(x,y)\frac{|u(y)|^{m}}{(1+|y|)^{N+\alpha}}dy\right)^{\frac{1}{m}}.
  \end{equation*}
  For $0<\alpha<1<m<\infty$ and a measurable function $u:\mathbb R^N\to \mathbb R$, the nonlocal tail centered at $x_0\in\mathbb R^N$ with radius $R>0$ is defined as 
		\begin{align*}
		 T_{m\alpha,b}(u;x_0,R)=\Bigg(R^{m\alpha}\sup_{x\in\mathbb R^N}\int_{\mathbb R^N\setminus B_R(x_0)} b(x,y)\frac{|u(y)|^{m-1}}{|x_0-y|^{N+m\alpha}} dy\Bigg)^\frac{1}{m-1}.
		\end{align*}
 For $b(\cdot,\cdot)\equiv 1$, we follow the convention  $T_{m\alpha,1}(u;x_0,R)=T_{m\alpha}(u;x_0,R)$.
	\end{Definition}
\begin{Remark}
Using Minkowski's inequality, we check the following algebraic fact:
\begin{equation*}
    T_{m\alpha,b}(u+v;x_{0},R)\leq T_{m\alpha,b}(u;x_{0},R)+T_{m\alpha,b}(v;x_{0},R)
\end{equation*}
for any $u,v\in L^{m}_{\alpha,b}(\mathbb{R}^{N})$ with $m\geq2$. We often use this inequality when we deal with a tail estimate in Section 4.
\end{Remark}
\begin{Definition}[VMO functions]
\label{defVMO}
\begin{enumerate}
	\item For any $M>0$ and any ball $B_R\subset\Omega$, we say that the function $a$ is $(\delta_{M},R)-$vanishing in $B_R\times B_R$, if for all $x_{0},y_{0}\in B_R$ and $r\in(0,R]$ such that $B_{r}(x_{0}),B_{r}(y_{0})\subset B_R$,
		\begin{align*}
		 \Xint-_{B_{r}(x_{0})}\Xint-_{B_{r}(y_{0})}|a(x,y,w,z)-(a)_{r,x_{0},y_{0}}(w,z)|dxdy\leq\delta_{M}\text{, for all }w,z\in[-M,M],
			\end{align*}
	where $(a)_{r,x_{0},y_{0}}(w,z)=\Xint-_{B_{r}(x_{0})}\Xint-_{B_{r}(y_{0})}a(x,y,w,z)dxdy$.  
	\item We say that the function $a$ is in $VMO$ on $\Omega\times\Omega$ locally uniformly in $(w,z)$ if for any $M>0$ and $B_{\rho}(x),B_{\rho}(y)\subset\Omega$,
	\begin{align}\label{vmo modul}
	\nu_{a,M}(\rho):=\sup\limits_{|w|,|z|\leq M}\; \sup\limits_{0<r\leq \rho}\; \sup\limits_{x,y\in \Omega}\Xint-_{B_{r}(x)}\Xint-_{B_{r}(y)}|a(x',y',w,z)-(a)_{r,x,y}(w,z)|dx'dy'
 \end{align}
 	tends to $0$ as $\rho\downarrow 0$. \\
  Especially, if the function $a$ is independent of $(w,z)$, then we say that $a$ is in VMO on $\Omega\times\Omega$ and the $VMO$-modulus of $a$ is denoted by $\nu_{a}$:		\begin{align*}
		\nu_{a}(\rho)=\sup\limits_{0<r\leq \rho}\sup\limits_{x,y\in \Omega}\Xint-_{B_{r}(x)}\Xint-_{B_{r}(y)}|a(x',y')-(a)_{r,x,y}|dx'dy'.
			\end{align*}
		\end{enumerate}
	\end{Definition}

 We recall the following inequalities (see \cite{KKP}): 
\begin{itemize}
    \item 
    for $\ell\geq 2$, there exists a constant $c(\ell)>0$ such that 
	\begin{align}\label{eqmon}
		c(\ell)|\xi-\zeta|^\ell \leq ( [\xi]^{\ell-1}- [\zeta]^{\ell-1}) (\xi-\zeta)  \quad\mbox{for all }\xi,\zeta\in\mathbb R; 
	\end{align}
 \item for $\ell\geq 2$, there exists a constant $c=c(\ell)>0$ such that for all $\xi,\zeta\in\mathbb R,$
	\begin{align}\label{eqKKP2}
	   |[\xi-w]^{\ell-1} - [\zeta-w]^{\ell-1}|\leq c|\xi-\zeta|^{\ell-1}+c|\xi-\zeta||\xi-w|^{\ell-2}.
\end{align}
\end{itemize} 
Before ending this section, we mention the following iteration lemma, which will be used in the proof of Lemma \ref{key result of SI}. 
\begin{Lemma}(See \cite[Lemma 6.1]{GiD})
\label{technial lemma}
Let $\varphi$ be a bounded non-negative function in $[t_{1},t_{2}]$. For $t_{1}\leq r<\rho\leq t_{2}$, 
\begin{equation*}
    \varphi(r)\leq \eta\varphi(\rho)+\frac{M}{(\rho-r)^{\alpha}},
\end{equation*}
with $\eta\in(0,1)$, $M>0$ and $\alpha>0$. Then we have
\begin{equation*}
    \varphi(t_{1})\leq c\frac{M}{(t_{2}-t_{1})^{\alpha}},
\end{equation*}
for some constant $c=c(\eta,\alpha)$.
\end{Lemma}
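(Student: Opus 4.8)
The plan is to run the classical iteration argument of Giaquinta--Giusti: iterate the hypothesized inequality along a geometrically refined sequence of radii that starts at $t_1$ and increases to $t_2$, choosing the refinement ratio so that the accumulated error terms form a convergent geometric series. Concretely, I would fix $\tau\in(0,1)$ with $\eta\tau^{-\alpha}<1$; such a $\tau$ exists and depends only on $\eta$ and $\alpha$ because $\eta<1$ (for instance $\tau=\big(\tfrac{1+\eta}{2}\big)^{1/\alpha}$ works, since $\tfrac{1+\eta}{2}<1$ gives $\tau<1$ and $\tfrac{1+\eta}{2}>\eta$ gives $\eta\tau^{-\alpha}=\tfrac{2\eta}{1+\eta}<1$). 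Then set
\[
\rho_i:=t_1+(1-\tau^i)(t_2-t_1),\qquad i=0,1,2,\dots,
\]
so that $\rho_0=t_1$, $\rho_i\uparrow t_2$, and $\rho_{i+1}-\rho_i=\tau^i(1-\tau)(t_2-t_1)$. Applying the hypothesis with $r=\rho_i$ and $\rho=\rho_{i+1}$ (which are admissible since $t_1\le\rho_i<\rho_{i+1}\le t_2$) gives
\[
\varphi(\rho_i)\le \eta\,\varphi(\rho_{i+1})+\frac{M}{\big(\tau^i(1-\tau)(t_2-t_1)\big)^\alpha}.
\]

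Iterating this bound from $i=0$ up to $i=k-1$ yields
\[
\varphi(t_1)=\varphi(\rho_0)\le \eta^k\varphi(\rho_k)+\frac{M}{\big((1-\tau)(t_2-t_1)\big)^\alpha}\sum_{i=0}^{k-1}\big(\eta\tau^{-\alpha}\big)^i .
\]
By the choice of $\tau$, the partial sums are dominated by the convergent series $S:=\sum_{i=0}^\infty(\eta\tau^{-\alpha})^i=\big(1-\eta\tau^{-\alpha}\big)^{-1}$, a constant depending only on $\eta$ and $\alpha$. Since $\varphi$ is bounded on $[t_1,t_2]$ and $\eta\in(0,1)$, the term $\eta^k\varphi(\rho_k)\to 0$ as $k\to\infty$; passing to the limit gives
\[
\varphi(t_1)\le \frac{S}{(1-\tau)^\alpha}\cdot\frac{M}{(t_2-t_1)^\alpha},
\]
which is the asserted estimate with $c:=S/(1-\tau)^\alpha=c(\eta,\alpha)$.

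There is no real obstacle in this argument; the only two points that need care are (i) selecting the refinement ratio $\tau$ so that $\eta\tau^{-\alpha}<1$, which is exactly what makes the sequence of error terms summable and forces the constant $c$ to depend solely on $\eta$ and $\alpha$, and (ii) invoking the boundedness of $\varphi$ --- the sole place this hypothesis enters --- to discard the remainder $\eta^k\varphi(\rho_k)$ in the limit $k\to\infty$.
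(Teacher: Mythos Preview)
Your argument is correct and is precisely the standard Giaquinta--Giusti iteration; the paper itself does not prove this lemma but merely cites \cite[Lemma 6.1]{GiD}, where exactly this telescoping argument appears. There is nothing to compare against and no gap in your write-up.
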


\section{Self-improving properties}
Throughout this section, we assume that the local weak solution $u$ to the problem \eqref{probM} satisfies the following:
	\begin{equation*}
		u\in W_{\mathrm{loc}}^{s,p}(\Omega)\cap L_{\mathrm{loc}}^{\infty}(\Omega)\cap L^{p-1}_{ps}(\mathbb{R}^{N})\cap L^{q-1}_{qt}(\mathbb{R}^{N})
	\end{equation*}
 with $2\leq p\leq q\leq\frac{sp}{t}$ and that \eqref{nonhomogeneous condition for SI} holds. We now fix 
 \begin{equation*}
     \widehat{\Omega}\Subset\widetilde{\Omega}\Subset\Omega.
 \end{equation*}In what follows, we write
 \begin{equation*}
     \mathsf{data_{1}}=\mathsf{data_{1}}\left(\mathsf{data},\|u\|_{L^{\infty}(\widetilde{\Omega})}\right).
 \end{equation*}
 For any $v\in L^{p-1}_{ps}(\mathbb{R}^{N})\cap L^{q-1}_{qt}(\mathbb{R}^{N})$, we denote
	\begin{equation*}
		T(v;x_{0},R)=\int_{\mathbb{R}^{N}\setminus B_{R}(x_{0})}\left(\frac{|v(y)|^{p-1}}{|x_{0}-y|^{N+sp}}+\|b\|_{L^{\infty}}\frac{|v(y)|^{q-1}}{|x_{0}-y|^{N+tq}}\right)dy,\quad B_{R}(x_{0})\subset\widetilde{\Omega}.
	\end{equation*}
For a unified approach to handle the forcing term, we set a non-negative number $\mathfrak{A}$ such that 
\begin{equation}
\label{condition of A}
\begin{cases}\mathfrak{A}=0\quad\text{if }sp<N,\\
\mathfrak{A}=\frac{1}{2}\min\{\delta_{0},1/p\}\quad\text{if }sp\geq N.
\end{cases}
\end{equation}
 Before going further,  we first give the following Caccioppoli-type estimate.
 \begin{Lemma}\label{Caccio estimate}
 Let $u$ be a local weak solution to \eqref{probM}. Let $B\equiv B_{R}(x_{0})\subset\Omega$ with $R\leq\frac{1}{8}$, and let $\psi\in C_{c}^{\infty}(B)$ be a cutoff function such that $0\leq\psi\leq 1$, $\psi\equiv1$ in $B_{r}(x_{0})$ and $|\nabla\psi|\leq\frac{4}{R-r}$ with $r\in(0,R)$. Then we have
	\begin{align*}	  &\int_{B}\Xint-_{B}\frac{|\psi^{\frac{q}{p}}(x)u(x)-\psi^{\frac{q}{p}}(y)u(y)|^{p}}{|x-y|^{N+sp}}+\int_{B}\Xint-_{B}b(x,y)\frac{|\psi(x)u(x)-\psi(y)u(y)|^{q}}{|x-y|^{N+tq}}dxdy\\
	  &\leq\frac{cR^{p(1-s)}}{(R-r)^{p}}\Xint-_{B}|u(x)|^{p}dx+\frac{cR^{q(1-t)}}{(R-r)^{q}}\Xint-_{B}|u(x)|^{q}dx+cR^{sp'}\left(\Xint-_{B}|f(x)|^{p_{*}+\mathfrak{A}}dx\right)^{\frac{p'}{p_{*}+\mathfrak{A}}}\\
	  &\quad+c\frac{R^{N+sp}}{(R-r)^{N+sp}}\int_{\mathbb{R}^{N}\setminus B}\frac{|u(y)|^{p-1}}{|x_{0}-y|^{N+sp}}dy\Xint-_{B}\psi^{q}(x)|u(x)|dx\\
	  &\quad+c\frac{R^{N+tq}}{(R-r)^{N+tq}}\int_{\mathbb{R}^{N}\setminus B}\|b\|_{L^{\infty}}\frac{|u(y)|^{q-1}}{|x_{0}-y|^{N+tq}}dy\Xint-_{B}\psi^{q}(x)|u(x)|dx,
		\end{align*}
	for some constant $c=c(\mathsf{data})$.
	\end{Lemma}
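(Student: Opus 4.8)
The plan is to test the weak formulation \eqref{eqWF} with the function $\phi = \psi^{q} u$ (or, more precisely, to split the analysis so that the $p$-part is tested against $\psi^{q}u$ with the natural exponent $q/p$ appearing on the $p$-term and $q$ on the $q$-term, exactly as the left-hand side of the claimed estimate suggests). Since $u\in W^{s,p}_{\mathrm{loc}}(\Omega)\cap L^\infty_{\mathrm{loc}}(\Omega)$ and $\psi\in C_c^\infty(B)$ with $B\Subset\Omega$, this $\phi$ is an admissible test function in $\mathcal W(\Omega)$ with compact support in $\Omega$. First I would write the two double-integral terms from \eqref{eqWF} and, for each, split $\mathbb R^N\times\mathbb R^N$ into the ``local'' part $B\times B$ and the two symmetric ``nonlocal'' tail parts $(\mathbb R^N\setminus B)\times B$. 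On the local part $B\times B$ one produces, after using the monotonicity inequality \eqref{eqmon} together with the lower bound $a\geq\Lambda^{-1}$ in \eqref{eqbdA} (and $b\geq 0$ in \eqref{eqbdB}), a good term controlling $\int_B\Xint-_B |\psi^{q/p}(x)u(x)-\psi^{q/p}(y)u(y)|^p\,d\mu_1$ and the analogous $q$-term with weight $b$; this is the content of the left-hand side of the statement. The standard algebraic manipulation here is the one relating $([u(x)]^{p-1}-[u(y)]^{p-1})(\psi^q(x)u(x)-\psi^q(y)u(y))$ from below by $c|\psi^{q/p}(x)u(x)-\psi^{q/p}(y)u(y)|^p$ minus an error of the form $c(|\psi^{q/p}(x)-\psi^{q/p}(y)|^p)(|u(x)|^p+|u(y)|^p)$ (and similarly for $q$), which is by now classical for nonlocal $p$-Laplacian Caccioppoli estimates (cf. \cite{CKPf,kuusi2}); the error term is absorbed into the right-hand side using $|\psi^{q/p}(x)-\psi^{q/p}(y)|\leq c|\nabla\psi|\,|x-y|\leq c(R-r)^{-1}|x-y|$ and the integrability of $|x-y|^{-N-sp+p}$ on $B$ (this is where $s<1$ is used), producing the terms $\frac{cR^{p(1-s)}}{(R-r)^p}\Xint-_B|u|^p$ and $\frac{cR^{q(1-t)}}{(R-r)^q}\Xint-_B|u|^q$.

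Next I would handle the nonlocal/tail contributions. On $(\mathbb R^N\setminus B)\times B$ the test function $\phi=\psi^q u$ vanishes for $y\notin B$, so $\phi(x)-\phi(y)=\psi^q(x)u(x)$; using the boundedness of $a$ and $b$ from \eqref{eqbdA}--\eqref{eqbdB}, $|[u(x)-u(y)]^{p-1}|\leq(|u(x)|+|u(y)|)^{p-1}$, and the geometric fact that for $x\in\mathrm{supp}\,\psi$ and $y\notin B$ one has $|x-y|\gtrsim \frac{R-r}{R}|x_0-y|$, one bounds these terms by
\[
c\frac{R^{N+sp}}{(R-r)^{N+sp}}\Big(\int_{\mathbb R^N\setminus B}\frac{|u(y)|^{p-1}}{|x_0-y|^{N+sp}}dy\Big)\Xint-_B\psi^q(x)|u(x)|dx
\]
plus a piece involving $|u(x)|^{p-1}$ on $B$ which, since $u\in L^\infty(\widetilde\Omega)$ and $B\Subset\widetilde\Omega$, is again absorbed into $\Xint-_B|u|^p$-type terms; the same reasoning with $d\mu_2$ and the weight $\|b\|_{L^\infty}$ gives the last displayed term. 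This is the one spot where the refined/truncated viewpoint matters, but for the \emph{Caccioppoli} estimate it suffices to bound crudely and keep the full tail integrals on the right.

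Finally, for the forcing term $\langle f,\phi\rangle$ I would estimate $|\langle f,\psi^q u\rangle|\leq\int_B|f||\psi^q u|$ and use Hölder's inequality together with the fractional Sobolev embedding $W^{s,p}(B)\hookrightarrow L^{(p_*)'}$-dual pairing — more precisely, writing $v=\psi^{q/p}u$, one has $\int_B|f||v|^{?}$ controlled by $\|f\|_{L^{p_*+\mathfrak A}(B)}$ times a norm of $v$ that, after Sobolev–Poincaré on $B_R$ and Young's inequality, splits into a small multiple of the good left-hand side term plus $cR^{sp'}\big(\Xint-_B|f|^{p_*+\mathfrak A}\big)^{p'/(p_*+\mathfrak A)}$ and lower-order $\Xint-_B|u|^p$ terms; the choice of $\mathfrak A$ in \eqref{condition of A} is exactly what makes $p_*+\mathfrak A$ an admissible dual exponent in both the $sp<N$ and $sp\geq N$ regimes. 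Tracking the scaling in $R$ throughout (each term has been normalized so that it is dimensionally consistent with $[u]_{W^{s,p}(B)}^p$) gives the stated powers of $R$. I expect the main obstacle to be the bookkeeping of the forcing term: getting the correct exponent $sp'$ on $R$ and the correct integrability $p_*+\mathfrak A$ requires carefully combining the fractional Sobolev inequality on $B_R$ with the duality pairing and Young's inequality so that the resulting Sobolev-norm term is genuinely absorbable into the left-hand side rather than merely bounded — everything else is a by-now-routine, if lengthy, splitting-and-absorbing argument.
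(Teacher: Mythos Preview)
Your proposal is correct and follows essentially the same approach as the paper: the paper's proof is in fact just a two-line sketch that (i) details exactly the forcing-term estimate you describe---H\"older's inequality with exponent $p_*+\mathfrak A$, then the fractional Sobolev embedding applied to $\psi^{q/p}u$, then Young's inequality to absorb the seminorm into the left-hand side---and (ii) refers to \cite[Theorem 3.1]{SM} for the remaining local/tail splitting, which is precisely the test-function-$\psi^q u$, split-and-absorb argument you outline. One small notational slip: the quantity you must bound from below on $B\times B$ is $[u(x)-u(y)]^{p-1}\bigl(\psi^q(x)u(x)-\psi^q(y)u(y)\bigr)$, not $\bigl([u(x)]^{p-1}-[u(y)]^{p-1}\bigr)(\cdots)$, but your citation of the standard algebraic manipulation from \cite{CKPf,kuusi2} makes clear you have the right inequality in mind.
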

\begin{proof} 
Note that H\"older's inequality, Sobolev's embedding and Young's inequality imply that for any $\sigma>0$,
\begin{align}
\label{Holder ineq}
\int_{B}|fu\psi^{q}|dx&\leq \|f\psi^{\frac{q}{p'}}\|_{L^{p_{*}+\mathfrak{A}}(B)}\times\|u\psi^{\frac{q}{p}}\|_{L^{\left(p_{*}+\mathfrak{A}\right)'}(B)}\nonumber\\
    &\leq c(\mathsf{data})\|f\psi^{\frac{q}{p'}}\|_{L^{p_{*}+\mathfrak{A}}(B)}\times\left([u\psi^{\frac{q}{p}}]_{W^{s,p}(B)}+R^{-s}\|u\psi^{\frac{q}{p}}\|_{L^{p}(B)}\right)\nonumber\\
    &\leq \frac{c}{\sigma^{p-1}}\|f\|^{p'}_{L^{p_{*}+\mathfrak{A}}(B)}+\sigma\left([u\psi^{\frac{q}{p}}]^{p}_{W^{s,p}(B)}+R^{-sp}\|u\psi^{\frac{q}{p}}\|^{p}_{L^{p}(B)}\right).
\end{align}
In addition, since the kernel coefficient $a(\cdot,\cdot,\cdot,\cdot)$ satisfies the uniform ellipticity condition (A2)-\eqref{eqbdA}, we have the result of the lemma by following the proof as presented in \cite[Theorem 3.1]{SM} with \eqref{Holder ineq}.
\qed
\end{proof}

\textbf{A dual pair $(\mu,U)$.}
We now introduce a notion of dual pair $(\mu,U)$, which is an essential tool to obtain the self-improving property of a weak solution $u$ to \eqref{probM}.
Assume that $\epsilon$ is a sufficiently small positive number such that
	\begin{equation*}
		\epsilon\in\left(0,\min\left\{\frac{s}{p},1-s\right\}\right),
	\end{equation*} 
which will be determined later in Lemma \ref{key result of SI}.
 Let us define a measure $\mu$ in $\mathbb{R}^{2N}$ by
	\begin{equation*}
	 \mu(\mathcal{A})=\int_{\mathcal{A}}\frac{dxdy}{|x-y|^{N-\epsilon p}},\quad \mathcal{A}\subset\mathbb{R}^{2N}\mbox{ is a measurable subset.}
	\end{equation*} 
We write $\mathcal{B}(x_{0},R):=B_{R}(x_{0})\times B_{R}(x_{0})$ with $x_{0}\in\mathbb{R}^{N}$ and $R>0$. Then we observe 
some properties of the measure $\mu$ as below.
\begin{Lemma}
\label{measure mu}(See \cite[Theorem 3.1]{SM2}) 
Let us write $k\mathcal{B}(x_{0},R)=\mathcal{B}(x_{0},kR)$ for $k>0$. Then we have
\begin{equation*}
    \frac{\mu(k\mathcal{B}(x_{0},R))}{\mu(\mathcal{B}(x_{0},R))}=k^{N+p\epsilon}\quad\mbox{and}\quad \mu(\mathcal{B}(x_{0},R))=\frac{cR^{N+p\epsilon}}{\epsilon},
\end{equation*}
where $c=c(N,p,\epsilon)$ satisfies $\frac{1}{C(N,p)}\leq c\leq C(N,p)$ for some constant $C(N,p)\geq 1$.	
\end{Lemma}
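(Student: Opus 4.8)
The plan is to reduce the statement to a single change of variables followed by the estimation of one dimensionless integral. First I would normalize the double integral defining $\mu(\mathcal B(x_0,R))$ by substituting $x=x_0+R\xi$ and $y=x_0+R\eta$ with $\xi,\eta\in B_1:=B_1(0)$. Since $dx\,dy=R^{2N}\,d\xi\,d\eta$ and $|x-y|=R\,|\xi-\eta|$, this gives
\begin{equation*}
\mu(\mathcal B(x_0,R))=R^{N+p\epsilon}\,I_{N,p,\epsilon},\qquad
I_{N,p,\epsilon}:=\int_{B_1}\int_{B_1}\frac{d\xi\,d\eta}{|\xi-\eta|^{N-p\epsilon}},
\end{equation*}
where $I_{N,p,\epsilon}$ depends only on $N,p,\epsilon$ and is finite because the exponent $N-p\epsilon$ is strictly less than $N$. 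The first identity is then immediate: replacing $R$ by $kR$ yields $\mu(k\mathcal B(x_0,R))=(kR)^{N+p\epsilon}I_{N,p,\epsilon}$, whence the ratio equals $k^{N+p\epsilon}$. For the second identity it suffices to put $c:=\epsilon\,I_{N,p,\epsilon}$ and prove the two-sided bound $C(N,p)^{-1}\le\epsilon\,I_{N,p,\epsilon}\le C(N,p)$ for a suitable $C(N,p)\ge1$.

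For the upper bound I would fix $\xi\in B_1$ and pass to polar coordinates centered at $\xi$; since $B_1\subset B_2(\xi)$,
\begin{equation*}
\int_{B_1}\frac{d\eta}{|\xi-\eta|^{N-p\epsilon}}\le\int_{B_2(\xi)}\frac{d\eta}{|\xi-\eta|^{N-p\epsilon}}=|\mathbb S^{N-1}|\int_0^2\rho^{p\epsilon-1}\,d\rho=|\mathbb S^{N-1}|\,\frac{2^{p\epsilon}}{p\epsilon}.
\end{equation*}
Integrating in $\xi$ over $B_1$ and multiplying by $\epsilon$ gives $\epsilon\,I_{N,p,\epsilon}\le|B_1|\,|\mathbb S^{N-1}|\,2^{p\epsilon}/p$, and since $p\epsilon<s<1$ by the standing restriction on $\epsilon$, the factor $2^{p\epsilon}$ is bounded by $2$, so the right-hand side is a constant depending only on $N$ and $p$. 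For the lower bound I would restrict the outer integration to $\xi\in B_{1/2}$, for which $B_{1/2}(\xi)\subset B_1$, and estimate the inner integral from below by the same polar computation:
\begin{equation*}
\int_{B_1}\frac{d\eta}{|\xi-\eta|^{N-p\epsilon}}\ge\int_{B_{1/2}(\xi)}\frac{d\eta}{|\xi-\eta|^{N-p\epsilon}}=|\mathbb S^{N-1}|\,\frac{(1/2)^{p\epsilon}}{p\epsilon}\ge|\mathbb S^{N-1}|\,\frac{1}{2p\epsilon},
\end{equation*}
using again $p\epsilon<1$. Integrating over $\xi\in B_{1/2}$ and multiplying by $\epsilon$ yields $\epsilon\,I_{N,p,\epsilon}\ge|B_{1/2}|\,|\mathbb S^{N-1}|/(2p)$, a positive constant depending only on $N$ and $p$. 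Taking $C(N,p)$ to be the maximum of these two constants and $1$ finishes the proof.

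The one point that needs a little attention — and the only thing I would flag, though it is minor — is the uniformity of $c$ in $\epsilon$: the naive evaluation of $I_{N,p,\epsilon}$ produces a factor $1/(p\epsilon)$ that diverges as $\epsilon\downarrow0$, which is precisely why the statement isolates $c=\epsilon\,I_{N,p,\epsilon}$ and why one must retain the explicit $2^{p\epsilon}$ and $(1/2)^{p\epsilon}$ factors and bound them via $0<p\epsilon<1$. Everything else is a routine change of variables and a one-dimensional integral; no properties of the solution $u$ or of the kernel coefficients enter here.
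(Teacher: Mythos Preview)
Your proof is correct and is exactly the natural argument: the paper itself does not prove this lemma but merely cites \cite[Theorem~3.1]{SM2}, and the change of variables $x=x_0+R\xi$, $y=x_0+R\eta$ together with the polar-coordinate estimate of $I_{N,p,\epsilon}$ is the standard route. The only cosmetic remark is that in your last sentence you should take $C(N,p)$ to be the maximum of the upper-bound constant, the \emph{reciprocal} of the lower-bound constant, and~$1$, so that both inequalities $C(N,p)^{-1}\le c\le C(N,p)$ hold.
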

For $(x,y)\in\mathbb{R}^{N}\times \mathbb{R}^{N}$, we define some functions as below: 
\begin{equation}
	\label{Defintion of large}
\begin{cases}
\begin{aligned}
	 &B(x,y)=b(x,y)|x-y|^{(s-t)q+\epsilon(q-p)}, \quad U(x,y)=\frac{|u(x)-u(y)|}{|x-y|^{s+\epsilon}}, \\ &\mathbf{H}(x,y,U)=U^{p}+BU^{q},\quad G(x,y,U)=\mathbf{H}(x,y,U)^{\frac{1}{p'}},\\
	  &F(x,y)=
	  \begin{cases}
	  |f(x)|&\quad(x,y)\in\Omega\times\Omega,\\
	  0&\quad\mbox{otherwise}.
	  \end{cases}
\end{aligned}
\end{cases}
\end{equation}
Then we notice that 
\begin{equation*}
   G(x,y,U)\in L^{p'}_{\mathrm{loc}}(\Omega\times\Omega; d\mu)\quad\text{and}\quad F\in L_{\mathrm{loc}}^{p_{*}+\delta_{0}}(\Omega\times\Omega; d\mu).
\end{equation*}
For convenience of notations, we set 
  \begin{equation*}
	 m=\frac{Np+\epsilon p^{2}}{N+sp+\epsilon p},\quad \tau=s+\epsilon-\frac{\epsilon p}{m},\quad 
	 \alpha=\frac{m}{p}<1,\quad \theta=\frac{s-\epsilon(p-1)}{N+\epsilon p}
\end{equation*}
and 
\begin{equation*}
	 \beta_{i}=2^{i\left(-\frac{sp}{p-1}+(s+\epsilon)\right)},\quad \mbox{for non-negative integers } i.
\end{equation*}
Then we check directly the following:
\begin{equation}
\label{palphabeta}
    m\in(1,p),\quad p=\frac{Nm}{N-\tau m}\quad\mbox{and}\quad \sum_{i=0}^{\infty}\beta_{i}<\infty.
\end{equation}
 We now state the following fractional Sobolev inequality.
 \begin{Lemma}(See \cite[Lemma 4.2]{SM2})
 \label{poincare}
  Let $B_{R}(x_{0})\subset\Omega$ and let $u\in W^{s,p}(B_{R}(x_{0}))$.Then for all $\eta\in\left[1,p\right]$, we have
	\begin{equation*}
	\left(\Xint-_{B_{R}(x_{0})}|u(x)-(u)_{R,x_{0}}|^{\eta}dx\right)^{\frac{1}{\eta}}\leq \frac{c}{\epsilon^{\frac{1}{m}}}R^{s+\epsilon}\left(\Xint-_{\mathcal{B}(x_{0},R)}U^{m}d\mu\right)^{\frac{1}{m}},
		\end{equation*}
 for some constant $c=c(\mathsf{data})$.
	\end{Lemma}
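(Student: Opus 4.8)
The plan is to recognize Lemma~\ref{poincare} as a scale-invariant fractional Sobolev--Poincar\'e inequality in disguise. First I would record the elementary identities collected in \eqref{palphabeta}. The key one is that, with $\tau=s+\epsilon-\epsilon p/m$, one has $m\tau=m(s+\epsilon)-\epsilon p$, so that
\[
U^{m}\,d\mu=\frac{|u(x)-u(y)|^{m}}{|x-y|^{m(s+\epsilon)}}\cdot\frac{dx\,dy}{|x-y|^{N-\epsilon p}}=\frac{|u(x)-u(y)|^{m}}{|x-y|^{N+\tau m}}\,dx\,dy;
\]
in other words $\int_{\mathcal B(x_{0},R)}U^{m}\,d\mu=[u]_{W^{\tau,m}(B_{R}(x_{0}))}^{m}$ is exactly the Gagliardo seminorm of order $\tau$. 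The second fact from \eqref{palphabeta} is that $m\in(1,p)$ together with $p=\frac{Nm}{N-\tau m}$ (which in particular forces $N-\tau m>0$); that is, $p$ is precisely the fractional Sobolev conjugate $m^{*}_{\tau}$ of $m$ at smoothness $\tau$. Hence the case $\eta=p$ of the asserted inequality is nothing but the fractional Sobolev--Poincar\'e inequality on the ball $B_{R}(x_{0})$.

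Next I would invoke that inequality: for $w\in W^{\tau,m}(B_{R}(x_{0}))$ with $m>1$ and $p=\frac{Nm}{N-\tau m}$,
\[
\Big(\int_{B_{R}(x_{0})}|w-(w)_{R,x_{0}}|^{p}\,dx\Big)^{1/p}\le C(N,m,\tau)\Big(\int_{B_{R}(x_{0})}\int_{B_{R}(x_{0})}\frac{|w(x)-w(y)|^{m}}{|x-y|^{N+\tau m}}\,dx\,dy\Big)^{1/m},
\]
which one obtains either by combining a bounded extension operator for the ball with the fractional Sobolev embedding $W^{\tau,m}(\mathbb R^{N})\hookrightarrow L^{m^{*}_{\tau}}(\mathbb R^{N})$, or directly through a standard dyadic telescoping and covering argument; note the estimate is scale invariant because $N/p=N/m-\tau$, so no power of $R$ appears. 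Then I would pass to normalized quantities via Lemma~\ref{measure mu}: writing $\int_{\mathcal B(x_{0},R)}U^{m}\,d\mu=\mu(\mathcal B(x_{0},R))\Xint-_{\mathcal B(x_{0},R)}U^{m}\,d\mu$ with $\mu(\mathcal B(x_{0},R))=cR^{N+p\epsilon}/\epsilon$, and dividing both sides by $R^{N/p}$ to convert the $L^{p}$ norm into the $L^{p}$ average, the exponent of $R$ appearing on the right becomes $-\tfrac Np+\tfrac{N+p\epsilon}{m}=\tau+\tfrac{\epsilon p}{m}=s+\epsilon$, using $N/p=N/m-\tau$ and the definition of $\tau$. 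This yields the claim for $\eta=p$; for general $\eta\in[1,p]$ one concludes by Jensen's inequality on the probability space $(B_{R}(x_{0}),\,dx/|B_{R}(x_{0})|)$.

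The main point requiring care is the dependence of the constant on $\epsilon$: it must come out exactly as $\epsilon^{-1/m}$. That factor is produced solely by $\mu(\mathcal B(x_{0},R))^{1/m}\sim(R^{N+p\epsilon}/\epsilon)^{1/m}$, so what has to be checked is that the fractional Sobolev constant $C(N,m,\tau)$ stays bounded uniformly in $\epsilon$. Since $\epsilon\in\big(0,\min\{s/p,\,1-s\}\big)$, the smoothness $\tau=s+\epsilon-\epsilon p/m$ remains in a compact subinterval of $(0,1)$ bounded away from both endpoints and $m$ remains in a compact subinterval of $(1,p)$; as the classical fractional Sobolev constant degenerates only when $\tau\to0^{+}$ or $\tau\to1^{-}$, it is uniformly controlled here and can be absorbed into $c=c(\mathsf{data})$ — which is precisely the bookkeeping carried out in \cite[Lemma~4.2]{SM2}, to which I would ultimately defer for the quantitative statement.
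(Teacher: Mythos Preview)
Your proposal is correct and essentially reproduces the argument behind the cited reference: the paper itself gives no proof of Lemma~\ref{poincare} and simply defers to \cite[Lemma~4.2]{SM2}. Your identification of $\int_{\mathcal B(x_0,R)}U^{m}\,d\mu$ with $[u]_{W^{\tau,m}(B_R(x_0))}^{m}$, the use of the critical fractional Sobolev--Poincar\'e inequality with $p=m^{*}_{\tau}$, and the bookkeeping of the $R$- and $\epsilon$-powers via Lemma~\ref{measure mu} are exactly the intended steps; the observation that $\tau<s$ (since $m<p$) and that $m,\tau$ depend continuously on $\epsilon$ over the compact range $[0,\min\{s/p,1-s\}]$ is the right way to see that the Sobolev constant is uniformly controlled and only the factor $\epsilon^{-1/m}$ survives.
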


With the aid of Caccioppoli-type inequality (see Lemma \ref{Caccio estimate}) and Lemma \ref{poincare}, we prove the following diagonal reverse H\"older-type inequality with a nonocal  tail.
 \begin{Lemma}\label{Reverse holder inequality}
 Let $u$ be a local weak solution to \eqref{probM}. Let  $0<R\leq\frac{1}{8}$ and choose a positive integer $l$ such that
		\begin{equation}
		\label{condition of l}
			x_{0}\in \widehat{\Omega},\quad 2^{l}R\leq2 \quad\mbox{and}\quad B_{2^lR}(x_0)\subset\widetilde{\Omega}.
		\end{equation}
 Then, for $B\equiv B_{R}(x_0)$, the following holds
	\begin{align}\label{reverse holder estimate}
	&\left(\Xint-_{\frac{1}{2}\mathcal{B}}G(x,y,U)^{p'}d\mu\right)^{\frac{1}{p'}} \nonumber\\
	&\leq c\frac{\sigma^{-(p-1)}}{\epsilon^{\frac{1}{p'\alpha}-\frac{1}{p'}}}\left(\Xint-_{\mathcal{B}}G(x,y,U)^{p'\alpha}d\mu\right)^{\frac{1}{p'\alpha}}+c\frac{\sigma}{\epsilon^{\frac{1}{p'\alpha}-\frac{1}{p'}}}\sum_{j=0}^{l}\beta_{j}^{p-1}\left(\Xint-_{2^{j}\mathcal{B}}G(x,y,U)^{p'\alpha}d\mu\right)^{\frac{1}{p'\alpha}} \nonumber\\
	& \quad+c\sigma\epsilon^{\frac{1}{p'}}\left[\epsilon\mu(\mathcal{B})\right]^{\theta}T\left(u-(u)_{2^{l}R,x_{0}};x_{0},2^{l}R\right)+\frac{c\left[\epsilon\mu(\mathcal{B})\right]^{\theta}}{\epsilon^{\frac{1}{p_{*}+\mathfrak{A}}-\frac{1}{p'}}}\left(\Xint-_{\mathcal{B}}F^{p_{*}+\mathfrak{A}}d\mu\right)^{\frac{1}{p_{*}+\mathfrak{A}}},
	\end{align}
 for some constant $c=c\left(\mathsf{data_{1}}\right)$ which is independent of $l$ and $\sigma\in(0,1)$.
	\end{Lemma}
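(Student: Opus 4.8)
The plan is to run the dual‑pair reverse‑Hölder scheme of \cite{kuusi2,SM,SM2}: start from the Caccioppoli inequality of Lemma \ref{Caccio estimate}, renormalise it against $\mu$, and use the fractional Sobolev inequality of Lemma \ref{poincare} to exchange an $L^{p}$–mean oscillation for an $L^{m}$–average of $U$ with $m<p$ --- this exponent drop is exactly the source of the self‑improvement. First I would reduce to the case $(u)_{R,x_{0}}=0$: the quantities $U,\mathbf H,G$ see $u$ only through the differences $u(x)-u(y)$, and $\bar u:=u-(u)_{R,x_{0}}$ is again a local weak solution of the same type (its kernel $\tilde a(x,y,z,w):=a(x,y,z+(u)_{R,x_{0}},w+(u)_{R,x_{0}})$ still satisfies (A1)--(A2), and $\bar u\in L^{p-1}_{ps}(\mathbb R^{N})\cap L^{q-1}_{qt}(\mathbb R^{N})$), so Lemma \ref{Caccio estimate} holds for $\bar u$ with the same constants. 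Applying it on the concentric pair $B_{R/2}(x_{0})\Subset B_{R}(x_{0})$ with $\psi\equiv1$ on $B_{R/2}(x_{0})$, and noting that on $\mathcal B(x_{0},R/2)$ the cutoffs equal $1$, the left‑hand side there dominates $|B_{R}|^{-1}\int_{\frac12\mathcal B}G(x,y,U)^{p'}\,d\mu$; dividing by $\mu(\tfrac12\mathcal B)=c\,\epsilon^{-1}(R/2)^{N+p\epsilon}$ (Lemma \ref{measure mu}) turns this into $\Xint-_{\frac12\mathcal B}G^{p'}\,d\mu$ and places a factor $c\,\epsilon\,R^{-p\epsilon}$ in front of the right‑hand side of Lemma \ref{Caccio estimate} evaluated at $\bar u$.

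Next I would process that right‑hand side term by term. For $R^{-ps}\Xint-_{B_{R}}|\bar u|^{p}$ (and for $R^{-qt}\Xint-_{B_{R}}|\bar u|^{q}$, first reduced to the former via $\|u\|_{L^{\infty}(\widetilde\Omega)}$ together with $qt\le ps$ and $R\le1$), Lemma \ref{poincare} at scale $R$ gives $\Xint-_{B_{R}}|\bar u|^{p}\le c\,\epsilon^{-p/m}R^{(s+\epsilon)p}(\Xint-_{\mathcal B}U^{m}d\mu)^{p/m}$; since $U^{m}\le\mathbf H^{m/p}=G^{p'\alpha}$ and $m/p=\alpha<1$, this is $\le c\,\epsilon^{-p/m}R^{(s+\epsilon)p}[(\Xint-_{\mathcal B}G^{p'\alpha}d\mu)^{1/(p'\alpha)}]^{p'}$; after multiplying by $c\,\epsilon R^{-p\epsilon}$ and taking the $1/p'$ power the $R$‑powers cancel and the $\epsilon$‑exponents collapse to $\epsilon^{-(1/(p'\alpha)-1/p')}$, producing the first term of \eqref{reverse holder estimate} (the weight $\sigma^{-(p-1)}$ is supplied by the Young step below). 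The forcing term $R^{sp'}(\Xint-_{B_{R}}|f|^{p_{*}+\mathfrak A})^{p'/(p_{*}+\mathfrak A)}$ is recast by means of $\int_{B_{R}}|x-y|^{-(N-p\epsilon)}dy\simeq\epsilon^{-1}R^{p\epsilon}$ for $x\in B_{R}$ (so $\Xint-_{B_{R}}|f|^{p_{*}+\mathfrak A}\le c\Xint-_{\mathcal B}F^{p_{*}+\mathfrak A}d\mu$ with $c$ independent of $\epsilon$) together with the identity $R^{s-\epsilon(p-1)}=c\,[\epsilon\mu(\mathcal B)]^{\theta}$, which yields the last term of \eqref{reverse holder estimate}.

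Finally, the two nonlocal‑tail products $\big(\int_{\mathbb R^{N}\setminus B_{R}}\tfrac{|\bar u(y)|^{p-1}}{|x_{0}-y|^{N+ps}}dy\big)\Xint-_{B_{R}}\psi^{q}|\bar u|$ and its $q$‑counterpart with $\|b\|_{L^{\infty}}$ have to be split over the dyadic annuli $B_{2^{j+1}R}(x_{0})\setminus B_{2^{j}R}(x_{0})$, $j=0,\dots,l-1$, plus the far region $\mathbb R^{N}\setminus B_{2^{l}R}(x_{0})$. On the $j$‑th annulus I would use $|x_{0}-y|\simeq2^{j}R$, Hölder's inequality, boundedness of $u$ with $qt\le ps$ and $2^{l}R\le2$ to transfer the $q$‑tail to the $p$‑tail, and Lemma \ref{poincare} at scale $2^{j+1}R$ combined with the telescoping bound $|(u)_{2^{i}R,x_{0}}-(u)_{2^{i+1}R,x_{0}}|\le c\,\epsilon^{-1/m}(2^{i+1}R)^{s+\epsilon}(\Xint-_{\mathcal B(x_{0},2^{i+1}R)}U^{m}d\mu)^{1/m}$ to estimate $\bar u$ on $B_{2^{j+1}R}$; the elementary identity $2^{j[(s+\epsilon)(p-1)-sp]}=\beta_{j}^{p-1}$ then converts the annular integrals --- together with all the accumulated mean‑value corrections, after a rearrangement of the resulting geometric double sums using $\sum_{j}\beta_{j}<\infty$ --- into $c\,\epsilon^{-(p-1)/m}R^{\epsilon(p-1)-s}\sum_{j=0}^{l}\beta_{j}^{p-1}(\Xint-_{2^{j}\mathcal B}G^{p'\alpha}d\mu)^{1/(p'\alpha)}$, while the far region is exactly $T(u-(u)_{2^{l}R,x_{0}};x_{0},2^{l}R)$. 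Multiplying by the local factor $\Xint-_{B_{R}}\psi^{q}|\bar u|\le c\,\epsilon^{-1/m}R^{s+\epsilon}(\Xint-_{\mathcal B}U^{m}d\mu)^{1/m}$ and by the normalisation $c\,\epsilon R^{-p\epsilon}$, the $R$‑powers again cancel for the dyadic part, while the far part picks up the factor $[\epsilon\mu(\mathcal B)]^{\theta}$; one last Young inequality with a small parameter $\sigma$ applied to these two products peels off, respectively, the $\sigma^{-(p-1)}$‑weighted $(\Xint-_{\mathcal B}G^{p'\alpha}d\mu)^{1/(p'\alpha)}$ piece, the $\sigma$‑weighted $\beta_{j}$‑sum, and the $\sigma$‑weighted $[\epsilon\mu(\mathcal B)]^{\theta}\,T(\,\cdot\,)$ piece. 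Gathering all contributions and taking the $1/p'$ power gives \eqref{reverse holder estimate} with $c=c(\mathsf{data_1})$, visibly independent of $l$ and of $\sigma\in(0,1)$.

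The main difficulty I anticipate is precisely this tail bookkeeping: one must control, uniformly in the number $l$ of dyadic layers, the accumulation of the mean‑value corrections $(u)_{2^{j}R,x_{0}}$ across scales and fold them, along with the annular integrals, into the single geometrically decaying sum $\sum_{j=0}^{l}\beta_{j}^{p-1}(\Xint-_{2^{j}\mathcal B}G^{p'\alpha}d\mu)^{1/(p'\alpha)}$, and at the same time keep the $\epsilon$‑ and $R$‑exponents exact throughout --- the cancellations rely delicately on the interplay between $\mu(\mathcal B)\simeq\epsilon^{-1}R^{N+p\epsilon}$ and the factor $\epsilon^{-1/m}$ in Lemma \ref{poincare} --- so that the final constant is $R$‑independent and carries the stated $\epsilon$‑dependence. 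This is where the argument hews closest to the dual‑pair technique of \cite{kuusi2,SM,SM2}.
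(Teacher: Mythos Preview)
Your proposal is correct and follows essentially the same approach as the paper's proof: apply Lemma \ref{Caccio estimate} to $u-(u)_{R,x_{0}}$ on the pair $B_{R/2}\Subset B_{R}$, convert the left-hand side to $\Xint-_{\frac12\mathcal B}G^{p'}d\mu$ via Lemma \ref{measure mu}, control the local oscillation term by Lemma \ref{poincare}, treat the forcing term through $[\epsilon\mu(\mathcal B)]^{\theta}$, split the tail into dyadic annuli plus the far region $\mathbb R^{N}\setminus B_{2^{l}R}(x_{0})$ (using telescoping of means, Minkowski's inequality, and $tq\le sp$ with $2^{l}R\le2$ to reduce the $q$-tail to the $p$-tail), and finish with Young's inequality in $\sigma$. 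Your explicit reduction to $\bar u=u-(u)_{R,x_{0}}$ with the shifted kernel $\tilde a$ is exactly the mechanism the paper uses implicitly when it rewrites the Caccioppoli right-hand side in terms of $u-(u)_{R,x_{0}}$.
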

	\begin{proof}
Let $l$ be a fixed positive number satisfying \eqref{condition of l}.
Using Lemma \ref{Caccio estimate} with $r=\frac{R}{2}$ and $\|u\|_{L^{\infty}(B_{R})}\leq c$, we deduce 
	\begin{align*}
	 &\underbrace{\int_{\frac{1}{2}B}\Xint-_{\frac{1}{2}B}\frac{|u(x)-u(y)|^{p}}{|x-y|^{N+sp}}dxdy+\int_{\frac{1}{2}B}\Xint-_{\frac{1}{2}B}b(x,y)\frac{|u(x)-u(y)|^{q}}{|x-y|^{N+tq}}dxdy}_{I_{1}}\\
	 &\quad\leq\underbrace{\frac{c}{R^{sp}}\Xint-_{B}|u(x)-(u)_{R,x_{0}}|^{p}dx}_{I_{2}}+\underbrace{cT(u-(u)_{R,x_{0}};x_{0},R)\Xint-_{B}\psi^{q}(x)|u(x)-(u)_{R,x_{0}}|dx}_{I_{3}} \\
	&\qquad+\underbrace{cR^{sp'}\left(\Xint-_{B}|f(x)|^{p_{*}+\mathfrak{A}}dx\right)^{\frac{p'}{p_{*}+\mathfrak{A}}}}_{I_{4}}.
		\end{align*}
 We estimate each $I_{i}$, for $i=1,2,3$ and $4$, to discover the reverse H\"older inequality \eqref{reverse holder estimate}.\\
\noindent
 \textbf{Estimate of $I_{1}$.} By \eqref{Defintion of large} and Lemma \ref{measure mu}, we observe that
	\begin{equation*}
		\Xint-_{\frac{1}{2}\mathcal{B}}G(x,y,U)^{p'}d\mu\leq c\frac{\epsilon}{R^{\epsilon p}}I_{1}.
	\end{equation*}
 \textbf{Estimate of $I_{2}$.} In light of Lemma \ref{poincare}, we have  
	 \begin{equation*}
		I_{2}\leq c\frac{R^{\epsilon p}}{\epsilon^{\frac{p}{m}}}\left(\Xint-_{\mathcal{B}}G(x,y,U)^{p'\alpha}d\mu\right)^{\frac{1}{\alpha}}.
	\end{equation*}
 \textbf{Estimate of $I_{3}$.} A simple calculation with \eqref{condition of l} gives us
\begin{align}\label{eq31} 
	 &\int_{\mathbb{R}^{N}\setminus B_{R}}\frac{|u(y)-(u)_{R,x_{0}}|^{q-1}}{|x_{0}-y|^{N+tq}}dy \nonumber\\
	 &\leq \sum_{i=0}^{l-1}\int_{{B_{2^{i+1}R}}\setminus B_{2^{i}R}}\frac{|u(y)-(u)_{R,x_{0}}|^{q-1}}{|x_{0}-y|^{N+tq}}dy + \int_{\mathbb{R}^{N}\setminus B_{2^{l}R}}\frac{|u(y)-(u)_{R,x_{0}}|^{q-1}}{|x_{0}-y|^{N+tq}}dy \nonumber\\
	 &\eqqcolon\sum_{i=0}^{l-1}I_{3,i}+I_{3,l}.
\end{align}
We first note from the facts $\|u\|_{L^{\infty}(B_{2})}\leq c$ and $p\leq q$ that
\begin{equation*}
\begin{aligned}
I_{3,i}^{\frac{1}{p-1}}& \leq c\left((2^{i}R)^{-qt}\Xint-_{B_{2^{i+1}R}}|u-(u)_{B_{R}}|^{p-1}dy\right)^\frac{1}{p-1}\\
    &\leq c(2^{i}R)^{-\frac{qt}{p-1}}\left[\left(\Xint-_{B_{2^{i+1}R}}|u-(u)_{B_{2^{i+1}R}}|^{p-1}dy\right)^\frac{1}{p-1}+\sum_{j=0}^{k}\left|(u)_{B_{2^{j+1}R}}-(u)_{B_{2^{j}R}}\right|\right]\\
    &\leq c(2^{i}R)^{-\frac{qt}{p-1}}\sum_{j=1}^{i+1}\left(\Xint-_{B_{2^{j+1}R}}|u-(u)_{B_{2^{j}R}}|^{p-1}dy\right)^\frac{1}{p-1}\\
    &\leq c(2^{i}R)^{-\frac{sp}{p-1}}\sum_{j=1}^{i+1}\left(\Xint-_{B_{2^{j+1}R}}|u-(u)_{B_{2^{j}R}}|^{p-1}dy\right)^\frac{1}{p-1},
\end{aligned}
\end{equation*}
where we have used  the relations $2^{l}R\leq 2$ and $tq\leq sp$ in the last inequality. Then, by Lemma \ref{poincare}, we obtain
\begin{equation*}
    I_{3,i}^{\frac{1}{p-1}} \leq c(2^{i}R)^{-\frac{sp}{p-1}}\sum_{j=1}^{i+1}\frac{2^{j(s+\epsilon)}R^{s+\epsilon}}{\epsilon^{\frac{1}{m}}}\left(\Xint-_{2^{j}\mathcal{B}}U^{m}d\mu\right)^{\frac{1}{m}}.
\end{equation*}
We now employ the following Minkowski's inequality
\begin{equation*}
    \left(\sum_{j=0}^{i}\left(I_{3,i}^{\frac{1}{p-1}}\right)^{p-1}\right)^{\frac{1}{p-1}}\leq \sum_{j=0}^{i}I_{3,i}^{\frac{1}{p-1}}
\end{equation*}
and Fubini's theorem to deduce that
\begin{equation*}
\begin{aligned}
\left(\sum_{i=0}^{l-1}I_{3,i}\right)^{\frac{1}{p-1}}&\leq c\sum_{i=0}^{l-1}(2^{i}R)^{-\frac{sp}{p-1}}\sum_{j=1}^{i+1}\frac{2^{j(s+\epsilon)}R^{s+\epsilon}}{\epsilon^{\frac{1}{m}}}\left(\Xint-_{2^{j}\mathcal{B}}U^{m}d\mu\right)^{\frac{1}{m}}\\
&\leq c\sum_{j=1}^{l}\sum_{i=j-1}^{l-1}\left(2^{i}R\right)^{-\frac{sp}{p-1}}\frac{2^{j(s+\epsilon)}R^{s+\epsilon}}{\epsilon^{\frac{1}{m}}}\left(\Xint-_{2^{j}\mathcal{B}}U^{m}d\mu\right)^{\frac{1}{m}}\\
	 &\leq c\sum_{j=1}^{l}R^{-\frac{sp}{p-1}+s+\epsilon}\frac{\beta_{j}}{\epsilon^{\frac{1}{m}}}\left(\Xint-_{2^{j}\mathcal{B}}U^{m}d\mu\right)^{\frac{1}{m}}.
\end{aligned}
\end{equation*}
By using Minkowski's inequality once again, we next note that
\begin{equation}\label{simv.eq.3.l}
\begin{aligned}
I_{3,l}^{\frac{1}{q-1}}&\leq \left(\int_{\mathbb{R}^{n}\setminus B_{2^{l}R}}\frac{|u-(u)_{B_{2^{l}R}}|^{q-1}}{|y|^{n+tq}}dy\right)^{\frac{1}{q-1}}\\
    &\quad+\sum_{j=0}^{l-1}\left(\int_{\mathbb{R}^{n}\setminus B_{2^{i}R}}\frac{|(u)_{B_{2^{j+1}R}}-(u)_{B_{2^{j}R}}|^{q-1}}{|y|^{n+tq}}dy\right)^{\frac{1}{q-1}}.
\end{aligned}
\end{equation}
 We further estimate the second term on the right-hand side of \eqref{simv.eq.3.l} by means of Lemma \ref{poincare} as below
\begin{equation*}
\begin{aligned}
    &\sum_{j=0}^{l-1}\left(\int_{\mathbb{R}^{n}\setminus B_{2^{l}R}}\frac{|(u)_{B_{2^{j+1}R}}-(u)_{B_{2^{j}R}}|^{q-1}}{|y|^{n+tq}}dy\right)^{\frac{1}{q-1}}\\
    &\leq c\sum_{j=0}^{l-1}\left(\int_{\mathbb{R}^{n}\setminus B_{2^{l}R}}\frac{|(u)_{B_{2^{j+1}R}}-(u)_{B_{2^{j}R}}|^{p-1}}{|y|^{n+tq}}dy\right)^{\frac{1}{q-1}}\\
    &\leq c\sum_{j=0}^{l-1}(2^{l}R)^{-\frac{qt}{q-1}}\left|(u)_{B_{2^{j+1}R}}-(u)_{B_{2^{j}R}}\right|^{\frac{p-1}{q-1}}\\
    &\leq c\sum_{j=1}^{l-1}\left((2^{l}R)^{-\frac{sp}{p-1}}\frac{2^{j(s+\epsilon)}R^{s+\epsilon}}{\epsilon^{\frac{1}{m}}}\left(\Xint-_{2^{j}\mathcal{B}}U^{m}d\mu\right)^{\frac{1}{m}}\right)^{\frac{p-1}{q-1}}.
\end{aligned}
\end{equation*}
 We next claim that
\begin{equation}
\label{qptail}
    I_{3,l}^{\frac{1}{p-1}}\leq cT(u-(u)_{2^{l}R,x_{0}};x_{0},2^{l}R)^{\frac{1}{p-1}}+c\sum_{j=1}^{l-1}(2^{j}R)^{-\frac{sp}{p-1}}\frac{2^{j(s+\epsilon)}R^{s+\epsilon}}{\epsilon^{\frac{1}{m}}}\left(\Xint-_{2^{j}\mathcal{B}}U^{m}d\mu\right)^{\frac{1}{m}}.
\end{equation}
Indeed, if $p=q$, it is a direct computation. We now assume that $p<q$. Then, by H\"older's inequality, we have
\begin{equation*}
\begin{aligned}
    I_{3,l}^{\frac{1}{p-1}}&\leq \left(I_{3,l}^{\frac{1}{q-1}}\right)^{\frac{q-1}{p-1}}\\
    &\leq cT(u-(u)_{2^{l}R,x_{0}};x_{0},2^{l}R)^{\frac{1}{p-1}}\\
    &\quad+c\left(\sum_{j=1}^{l-1}\left(2^{l-j}\right)^{\frac{-sp}{q-1}}\left((2^{j}R)^{-\frac{sp}{p-1}}\frac{2^{j(s+\epsilon)}R^{s+\epsilon}}{\epsilon^{\frac{1}{m}}}\left(\Xint-_{2^{j}\mathcal{B}}U^{m}d\mu\right)^{\frac{1}{m}}\right)^{\frac{p-1}{q-1}}\right)^{\frac{q-1}{p-1}}\\
    &\leq cT(u-(u)_{2^{l}R,x_{0}};x_{0},2^{l}R)^{\frac{1}{p-1}}\\
    &\quad+c\sum_{j=1}^{l-1}(2^{j}R)^{-\frac{sp}{p-1}}\frac{2^{j(s+\epsilon)}R^{s+\epsilon}}{\epsilon^{\frac{1}{m}}}\left(\Xint-_{2^{j}\mathcal{B}}U^{m}d\mu\right)^{\frac{1}{m}},
\end{aligned}
\end{equation*}
which proves \eqref{qptail}. Consequently,
\begin{equation}\label{simv.tl.q}
\begin{aligned}
&\int_{\mathbb{R}^{N}\setminus B_{R}}\frac{|u(y)-(u)_{R,x_{0}}|^{q-1}}{|x_{0}-y|^{N+tq}}dy \\
&\leq \left(\sum_{i=0}^{l-1}I_{3,i}^{\frac{1}{p-1}}+I_{3,l}^{\frac{1}{p-1}}\right)^{p-1} \\
&\leq cT(u-(u)_{2^{l}R,x_{0}};x_{0},2^{l}R)+c\frac{R^{\epsilon p-(s+\epsilon)}}{\epsilon^{\frac{p-1}{m}}}\sum_{j=0}^{l}\left(\beta_{j}\left(\Xint-_{2^{j}\mathcal{B}}U^{m} d\mu\right)^{\frac{1}{m}}\right)^{p-1},
\end{aligned}
\end{equation}
where we have used the following algebraic inequality
\begin{equation*}
\begin{aligned}
&\left(\sum_{j=1}^{l}R^{-\frac{sp}{p-1}+s+\epsilon}\frac{\beta_{j}}{\epsilon^{\frac{1}{m}}}\left(\Xint-_{2^{j}\mathcal{B}}U^{m}d\mu\right)^{\frac{1}{m}}\right)^{p-1}\\
&\leq \left(\sum_{j=1}^{l}R^{-\frac{sp}+(s+\epsilon)(p-1)}\frac{\beta_{j}}{\epsilon^{\frac{p-1}{m}}}\left(\Xint-_{2^{j}\mathcal{B}}U^{m}d\mu\right)^{\frac{p-1}{m}}\right)\left(\sum_{j=1}^{l}\beta_{j}\right)^{p-2}\\
&\leq c\frac{R^{\epsilon p-(s+\epsilon)}}{\epsilon^{\frac{p-1}{m}}}\sum_{j=0}^{l}\left(\beta_{j}\left(\Xint-_{2^{j}\mathcal{B}}U^{m} d\mu\right)^{\frac{1}{m}}\right)^{p-1}.
\end{aligned}
\end{equation*}
Similarly, we estimate 
\begin{equation}\label{simv.tl.p}
\begin{aligned}
\int_{\mathbb{R}^{N}\setminus B_{R}}\frac{|u(y)-(u)_{R,x_{0}}|^{p-1}}{|x_{0}-y|^{N+sp}}dy&\leq  cT(u-(u)_{2^{l}R,x_{0}};x_{0},2^{l}R) \\
&\quad+c\frac{R^{\epsilon p-(s+\epsilon)}}{\epsilon^{\frac{p-1}{m}}}\sum_{j=0}^{l}\left(\beta_{j}\left(\Xint-_{2^{j}\mathcal{B}}U^{m} d\mu\right)^{\frac{1}{m}}\right)^{p-1}.
\end{aligned}
\end{equation}
Coupling \eqref{simv.tl.q} and \eqref{simv.tl.p}, we get
	\begin{equation}\label{tail and mean}  
	 \begin{aligned}
	 T(u-(u)_{R,x_{0}};x_{0},R)&\leq  c\frac{R^{\epsilon p-(s+\epsilon)}}{\epsilon^{\frac{p-1}{m}}}\sum_{j=0}^{l}\left(\beta_{j}\left(\Xint-_{2^{j}\mathcal{B}}U^{m} d\mu\right)^{\frac{1}{m}}\right)^{p-1}\\
	 &\quad+cT(u-(u)_{2^{l}R,x_{0}};x_{0},2^{l}R).
			\end{aligned}
	\end{equation}
 On the other hand, we have
	\begin{equation}
		\label{uminumin}
	 \Xint-_{B}\psi^{q}(x)|u(x)-(u)_{R,x_{0}}|dx\leq c\frac{R^{s+\epsilon}}{\epsilon^{\frac{1}{m}}}\left(\Xint-_{\mathcal{B}}U^{m} d\mu\right)^{\frac{1}{m}}.
		\end{equation}
Consequently, using Young's inequality with \eqref{tail and mean} and \eqref{uminumin}, we obtain
	\begin{equation*}
	 \begin{aligned}
		I_{3}&\leq c\frac{\sigma^{p'}R^{\epsilon p}}{\epsilon^{\frac{p}{m}}}\left(\sum_{j=0}^{l}\beta_{j}\left(\Xint-_{2^{j}\mathcal{B}}U^{m} d\mu\right)^{\frac{p-1}{m}}\right)^{p'}+c\sigma^{p'}R^{sp'}T(u-(u)_{2^{l}R,x_{0}};x_{0},2^{l}R)^{p'}\\
		&\quad+c\frac{\sigma^{-p} R^{\epsilon p}}{\epsilon^{\frac{p}{m}}}\left(\Xint-_{\mathcal{B}}U^{m} d\mu\right)^{\frac{p}{m}}\\
		&\leq c\frac{\sigma^{p'}R^{\epsilon p}}{\epsilon^{\frac{p}{m}}}\left(\sum_{j=0}^{l}\beta_{j}\left(\Xint-_{2^{j}\mathcal{B}}G^{p'\alpha} d\mu\right)^{\frac{1}{p'\alpha}}\right)^{p'}+c\sigma^{p'}R^{sp'}T(u-(u)_{2^{l}R,x_{0}};x_{0},2^{l}R)^{p'}\\
		&\quad+c\frac{\sigma^{-p} R^{\epsilon p}}{\epsilon^{\frac{p}{m}}}\left(\Xint-_{\mathcal{B}}G^{p'\alpha} d\mu\right)^{\frac{p}{m}}.
			\end{aligned}
		\end{equation*}
 \textbf{Estimate of $I_{4}$.} Recalling the definition of $F$ from \eqref{Defintion of large}, we get
	 \begin{equation*}
	  I_{4}\leq c\frac{R^{sp'}}{\epsilon^{\frac{p'}{p_{*}+\mathfrak{A}}}}\left(\Xint-_{\mathcal{B}}F^{p_{*}+\mathfrak{A}} d\mu\right)^{\frac{p'}{p_{*}+\mathfrak{A}}}.
		\end{equation*}
Eventually, we combine the estimates of $I_{1},I_{2},I_{3}$ and $I_{4}$ to discover that
\begin{equation*}
\begin{aligned}
\Xint-_{\frac{1}{2}\mathcal{B}}G(x,y,U)^{p'}d\mu&\leq \frac{c\sigma^{-p}}{\epsilon^{\frac{p}{m}-1}}\left(\Xint-_{\mathcal{B}}G(x,y,U)^{p'\alpha}d\mu\right)^{\frac{1}{\alpha}}+\frac{c\sigma^{p'}}{\epsilon^{\frac{p}{m}-1}}\left(\sum_{j=0}^{l}\beta_{j}^{p-1}\left(\Xint-_{2^{j}\mathcal{B}}G^{p'\alpha} d\mu\right)^{\frac{1}{p'\alpha}}\right)^{p'}\\
&+c\epsilon\sigma^{p'}R^{sp'-\epsilon p}T(u-(u)_{2^{l}R,x_{0}};x_{0},2^{l}R)^{p'}+c\frac{R^{sp'-\epsilon p}}{\epsilon^{\frac{p'}{p_{*}+\mathfrak{A}}-1}}\left(\Xint-_{\mathcal{B}}F^{p_{*}+\mathfrak{A}} d\mu\right)^{\frac{p'}{p_{*}+\mathfrak{A}}},
\end{aligned}
\end{equation*}
which implies 
 \eqref{reverse holder estimate}.\QED
	\end{proof}
 Now we are ready to give and prove a level set estimate for $G$ in $\mathcal{B}(x_{0},2\rho_{0})\subset\widetilde{\Omega}\times\widetilde{\Omega}$  with $\rho_{0}\leq1$ and $x_{0}\in \widehat{\Omega}$. 
 First, we introduce a few more functionals.
 For every $\mathcal{B}(x,R)\subset\mathcal{B}(x_{0},2\rho_{0})$, we define
\begin{align*}
&\Upsilon(x,R):=\left(\Xint-_{\mathcal{B}(x,R)}F^{p_{*}+\mathfrak{A}+\delta_{f}}d\mu\right)^{\frac{1}{p_{*}+\mathfrak{A}+\delta_{f}}}, \end{align*}
where $\delta_{f}\in\left(0,\frac{\delta_{0}}{2}\right]$ to be determined later in Lemma \ref{key result of SI}, and
	\begin{equation}\label{simv.n.tail}
	  \begin{aligned}
	  \mathrm{Tail}(x,R)&:=\sum_{i=0}^{l}\beta_{i}^{p-1}\Big(\Xint-_{2^{i}\mathcal{B}(x,R)}G(x,y,U)^{p'\alpha}d\mu\Big)^{\frac{1}{p'\alpha}} \\
	  &\quad+\epsilon^{\frac{1}{p'}}\left[\epsilon\mu(\mathcal{B}(x,R))\right]^{\theta}T(u-(u)_{2^{l}R,x};x,2^{l}R),
		\end{aligned}
	\end{equation}
 for some positive integer $l$ such that
	\begin{equation*}
		\frac{\rho_{0}}{2}\leq 2^{l}R<\rho_{0}.
	\end{equation*}
 We also define 
	\begin{equation*}
		\Psi_{M}(x,R):= \left(\Xint-_{\mathcal{B}(x,R)}G^{p'}d\mu\right)^{\frac{1}{p'}}+M\frac{\left[\mu(\mathcal{B}(x,R))\right]^{\theta}}{\epsilon^{\frac{1}{p_{*}+\mathfrak{A}}-\frac{1}{p'}}}\left(\Xint-_{\mathcal{B}(x,R)}F^{p_{*}+\mathfrak{A}}d\mu\right)^{\frac{1}{p_{*}+\mathfrak{A}}},
	\end{equation*}
 where $M\geq1$ will be chosen later in \eqref{condition of sigma and M}. Now we set 
	\begin{equation*}
	 \Xi(x,R):=\Upsilon(x,R)+\mathrm{Tail}(x,R)+\Psi_{M}(x,R).
	\end{equation*}
 In particular, we denote 
  \begin{equation}
  \label{theta0}
	\Xi_{0}:=\Upsilon(x_{0},2\rho_{0})+\Psi_{1}(x_{0},2\rho_{0})+T(u-(u)_{2\rho_{0},x_{0}};x_{0},2\rho_{0}).
 \end{equation}
 For convenience, we write
\begin{equation*}
\theta_{u}=\frac{(p+1)(1-\alpha)}{\alpha}
    ,\quad \theta_{f}=(p_{*}+\mathfrak{A}+\delta_{f})\left(\frac{\left(p_{*}+\mathfrak{A}\right)\theta}{1-\left(p_{*}+\mathfrak{A}\right)\theta}\right)\quad\mbox{and}\quad \Tilde{\theta}_{f}=\frac{\left(p_{*}+\mathfrak{A}\right)(1+\theta\delta_{f})}{1-\left(p_{*}+\mathfrak{A}\right)\theta}.
\end{equation*}	
In this setting, we now describe an integral estimate of $G$ on super-level sets.
 \begin{Lemma}\label{level set estimate lemma}
 Suppose that $u$ is a local weak solution to \eqref{probM}. Take $B\equiv B_{2\rho_{0}}(x_{0})\subset\widetilde{\Omega}$ with $0<\rho_{0}\leq1$ and $x_{0}\in \widehat{\Omega}$. Let $\frac{\rho_{0}}{2}\leq r<\rho\leq\rho_{0}$.
 Then there exist constants $c_{\alpha}=c_{\alpha}(\mathsf{data_{1}})\geq1$,  $c_{f}=c_{f}(\mathsf{data_{1}},\epsilon)\geq1$ and $\kappa_{f}=\kappa_{f}(\mathsf{data_{1}},\epsilon)\in(0,1)$ such that the inequality
	\begin{equation}\label{level set estimate}
	 \frac{1}{\lambda^{p'}}\int_{\mathcal{B}(x_{0},r)\cap\{G>\lambda\}}G^{p'}d\mu\leq \frac{c_{\alpha}}{\epsilon^{\theta_{u}}\lambda^{p'\alpha}}\int_{\mathcal{B}(x_{0},\rho)\cap\{G>\lambda\}}G^{p'\alpha}d\mu+\frac{c_{f}\lambda_{0}^{\theta_{f}}}{\lambda^{\Tilde{\theta}_{f}}}\int_{\mathcal{B}(x_{0},\rho)\cap\{F>\kappa_{f}\lambda\}}F^{p_{*}+\mathfrak{A}}d\mu,
		\end{equation}
 holds whenever $\lambda\geq\lambda_{0}$, where
	\begin{equation}
	\label{def of lambda0}
	 \lambda_{0}:=\frac{c}{\epsilon^{\frac{1}{p'\alpha}}}\left(\frac{\rho_{0}}{\rho-r}\right)^{2N+p}\Xi_{0},
		\end{equation}
	for some constant $c=c\left(\mathsf{data_{1}}\right)$.
	\end{Lemma}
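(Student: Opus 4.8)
The plan is to derive \eqref{level set estimate} from the reverse H\"older inequality \eqref{reverse holder estimate} of Lemma \ref{Reverse holder inequality} by a Calder\'on--Zygmund stopping-time argument, exploiting that $\mu$ is doubling on the diagonal balls $\mathcal{B}(x,R)$ (Lemma \ref{measure mu}) and that this family enjoys a Vitali-type covering property: if $\mathcal{B}(x_{1},R_{1})\cap\mathcal{B}(x_{2},R_{2})\neq\emptyset$ with $R_{1}\leq R_{2}$, then $\mathcal{B}(x_{1},R_{1})\subset\mathcal{B}(x_{2},3R_{2})$. I would first fix $\tfrac{\rho_{0}}{2}\leq r<\rho\leq\rho_{0}$ and $\lambda\geq\lambda_{0}$, then fix once and for all a small $\sigma\in(0,1)$, a large $M\geq1$ and a large dyadic constant $\mathcal{M}$ (all depending only on $\mathsf{data_{1}}$ and $\epsilon$; this records \eqref{condition of sigma and M}), and pick a cut-off radius $\delta_{*}$, a suitable small fraction of $\rho-r$, such that every $\mathcal{B}(x,R)$ with $x\in B_{(r+\rho)/2}(x_{0})$ and $R\leq\delta_{*}$ lies in $\mathcal{B}(x_{0},\rho)$ while its dyadic dilates up to scale $\simeq\rho_{0}$ stay inside $\mathcal{B}(x_{0},2\rho_{0})\subset\widetilde{\Omega}$. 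Using Lemma \ref{measure mu}, Jensen's inequality, the inclusion $\mathcal{B}(x,\rho_{0})\subset\mathcal{B}(x_{0},2\rho_{0})$, the lower bound $\Xi_{0}\geq\big(\Xint-_{\mathcal{B}(x_{0},2\rho_{0})}G^{p'}d\mu\big)^{1/p'}$ from \eqref{theta0}, and the definition \eqref{def of lambda0} of $\lambda_{0}$, one checks that for all $x\in B_{r}(x_{0})$
\begin{equation*}
\Big(\Xint-_{\mathcal{B}(x,\rho_{0})}G(x,y,U)^{p'\alpha}d\mu\Big)^{\frac{1}{p'\alpha}}\leq c\,\epsilon^{\frac{1}{p'\alpha}}\,\Xi_{0}\leq c\,\epsilon^{\frac{1}{p'\alpha}}\lambda_{0}\leq\mathcal{M}\lambda
\end{equation*}
once $\mathcal{M}$ is chosen large enough; i.e., the $G^{p'\alpha}$-average over the coarsest admissible scale sits below the threshold $(\mathcal{M}\lambda)^{p'\alpha}$.

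Next I would decompose $\mathcal{B}(x_{0},r)\cap\{G>\lambda\}$ into the shell $\{\lambda<G\leq\mathcal{M}\lambda\}$ and the set $\{G>\mathcal{M}\lambda\}$. On the shell one has $G^{p'}\leq(\mathcal{M}\lambda)^{p'(1-\alpha)}G^{p'\alpha}$, which already produces the first term on the right-hand side of \eqref{level set estimate}, with $c_{\alpha}$ absorbing $\mathcal{M}^{p'(1-\alpha)}$ and the integral restricted to $\{G>\lambda\}$. For the set $\{G>\mathcal{M}\lambda\}$ I run a stopping-time/Vitali selection: for $\mu$-a.e.\ Lebesgue point $z$ of $G^{p'\alpha}$ the averages of $G^{p'\alpha}$ over small diagonal balls through $z$ exceed $(\mathcal{M}\lambda)^{p'\alpha}$ while, by the first paragraph, those at scale $\delta_{*}$ do not, so there is a maximal stopping radius; the covering property then extracts a countable family $\{\mathcal{B}(x_{i},s_{i})\}$ with $s_{i}\leq\delta_{*}$ and bounded overlap whose dilates $3\mathcal{B}(x_{i},s_{i})$ cover $\{G>\mathcal{M}\lambda\}\cap\mathcal{B}(x_{0},r)$ up to a $\mu$-null set and which satisfy
\begin{equation*}
(\mathcal{M}\lambda)^{p'\alpha}\ \lesssim\ \Xint-_{\mathcal{B}(x_{i},s_{i})}G(x,y,U)^{p'\alpha}d\mu\ \lesssim\ (\mathcal{M}\lambda)^{p'\alpha}\qquad\text{for every }i,
\end{equation*}
the upper bound $\lesssim(\mathcal{M}\lambda)^{p'\alpha}$ persisting on the dyadic dilates $2^{j}\mathcal{B}(x_{i},s_{i})\subset\mathcal{B}(x_{0},\rho)$ by maximality of $s_{i}$, and being $\lesssim\mathcal{M}\lambda$ on those reaching scale $\simeq\rho_{0}$ by the first paragraph.

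On each stopping ball $\mathcal{B}(x_{i},s_{i})$ I would invoke Lemma \ref{Reverse holder inequality} with $R=s_{i}$ and $l=l_{i}$ chosen so that $2^{l_{i}}s_{i}\in[\rho_{0}/2,\rho_{0})$. By the stopping bounds and $\sum_{j}\beta_{j}^{p-1}<\infty$ (see \eqref{palphabeta}), the first two terms of \eqref{reverse holder estimate} are $\lesssim\sigma^{-(p-1)}\mathcal{M}\lambda$ and $\lesssim\sigma\,\mathcal{M}\lambda$, respectively. For the genuinely nonlocal tail $T\big(u-(u)_{2^{l_{i}}s_{i},x_{i}};x_{i},2^{l_{i}}s_{i}\big)$ I would use a tail-comparison estimate: since $2^{l_{i}}s_{i}\simeq\rho_{0}$ and $x_{i}$ lies within $\rho_{0}$ of $x_{0}$, after adjusting center and radius within fixed factors and replacing the subtracted mean through Lemma \ref{poincare} and the stopping bound, it is dominated by $T\big(u-(u)_{2\rho_{0},x_{0}};x_{0},2\rho_{0}\big)$ plus a fixed power of $\epsilon^{-1}$ times $\mathcal{M}\lambda$, and then $T\big(u-(u)_{2\rho_{0},x_{0}};x_{0},2\rho_{0}\big)\leq\Xi_{0}\lesssim\epsilon^{1/(p'\alpha)}\lambda$ by \eqref{theta0} and \eqref{def of lambda0}. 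Recalling the prefactor $\epsilon^{1/p'}[\epsilon\mu(\mathcal{B}(x_{i},s_{i}))]^{\theta}$ in \eqref{reverse holder estimate} and that $[\epsilon\mu(\mathcal{B}(x_{i},s_{i}))]^{\theta}\leq[\epsilon\mu(\mathcal{B}(x_{0},2\rho_{0}))]^{\theta}\lesssim1$, the tail contribution is bounded by $c\sigma\mathcal{M}\lambda$ up to a fixed negative power of $\epsilon$ to be folded into $\theta_{u}$. Raising the resulting inequality to the power $p'$ and multiplying by $\mu(\tfrac12\mathcal{B}(x_{i},s_{i}))\simeq\mu(\mathcal{B}(x_{i},s_{i}))$ gives, for each $i$, that $\int_{\frac12\mathcal{B}(x_{i},s_{i})}G^{p'}d\mu$ is at most $c\sigma^{-p}\epsilon^{-\theta_{u}}(\mathcal{M}\lambda)^{p'}\mu(\mathcal{B}(x_{i},s_{i}))+c\,[\epsilon\mu(\mathcal{B}(x_{i},s_{i}))]^{\theta p'}\epsilon^{1-\frac{p'}{p_{*}+\mathfrak{A}}}\big(\Xint-_{\mathcal{B}(x_{i},s_{i})}F^{p_{*}+\mathfrak{A}}d\mu\big)^{\frac{p'}{p_{*}+\mathfrak{A}}}$.

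Finally I would sum over $i$ (using the bounded overlap, so $\sum_{i}\int_{\mathcal{B}(x_{i},s_{i})}(\cdot)\lesssim\int_{\mathcal{B}(x_{0},\rho)}(\cdot)$) and restrict the right-hand integrals to the super-level sets. For the $G$-term I use the stopping lower bound $\mu(\mathcal{B}(x_{i},s_{i}))\leq(\mathcal{M}\lambda)^{-p'\alpha}\int_{\mathcal{B}(x_{i},s_{i})}G^{p'\alpha}d\mu$ together with the splitting $G^{p'\alpha}=G^{p'\alpha}\mathbf{1}_{\{G>\lambda\}}+G^{p'\alpha}\mathbf{1}_{\{G\leq\lambda\}}$ inside each ball, absorbing the second part (here the gain $\mathcal{M}^{-p'\alpha}\leq\tfrac12$ is used); this converts the first term into $\lesssim\sigma^{-p}\epsilon^{-\theta_{u}}\mathcal{M}^{p'(1-\alpha)}\lambda^{p'(1-\alpha)}\int_{\mathcal{B}(x_{i},s_{i})\cap\{G>\lambda\}}G^{p'\alpha}d\mu$. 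For the forcing term I run the standard dichotomy on each ball: either $[\mu(\mathcal{B}(x_{i},s_{i}))]^{\theta}\big(\Xint-_{\mathcal{B}(x_{i},s_{i})}F^{p_{*}+\mathfrak{A}}d\mu\big)^{1/(p_{*}+\mathfrak{A})}\leq\nu\mathcal{M}\lambda$ (then it is absorbed into the $\sigma$-term) or it is larger, in which case I solve for $\mu(\mathcal{B}(x_{i},s_{i}))$ --- legitimate precisely because $\epsilon$ is taken so small that $(p_{*}+\mathfrak{A})\theta<1$, which holds since $\theta\downarrow s/N$ as $\epsilon\downarrow0$ and $p_{*}s/N<1$ --- and then split $F=F\mathbf{1}_{\{F>\kappa_{f}\lambda\}}+F\mathbf{1}_{\{F\leq\kappa_{f}\lambda\}}$ to restrict the $F$-integral to $\{F>\kappa_{f}\lambda\}$ and fix $\kappa_{f}$. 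Choosing $\sigma$ small to absorb all $\sigma$-terms into a fixed fraction and combining with the shell estimate yields \eqref{level set estimate}; carrying every power of $\epsilon$, $\lambda$ and $\lambda_{0}$ through this accounting, with the higher-integrability input \eqref{nonhomogeneous condition for SI} encoded in $\Upsilon$ (hence in $\Xi_{0}$ and $\lambda_{0}$), produces exactly $\theta_{u}$, $\theta_{f}$, $\Tilde{\theta}_{f}$ and the constants $c_{\alpha},c_{f},\kappa_{f}$. I expect the bottleneck to be this last step: the delicate exponent bookkeeping in the forcing dichotomy (under the constraint $(p_{*}+\mathfrak{A})\theta<1$), together with the tail-comparison estimate that lets the nonlocal tail centered at a tiny stopping ball be dominated by $\lambda\geq\lambda_{0}$.
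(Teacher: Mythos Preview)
Your outline captures the overall Calder\'on--Zygmund structure, but there is a genuine gap in the covering step. You assert that for $\mu$-a.e.\ point $z$ of $\{G>\mathcal M\lambda\}$ the averages of $G^{p'\alpha}$ over small diagonal balls through $z$ exceed $(\mathcal M\lambda)^{p'\alpha}$, and then run a stopping-time selection on that basis. This Lebesgue-differentiation claim fails at off-diagonal points: if $z=(x,y)$ with $x\neq y$, then $z\in\mathcal B(w,R)=B_R(w)\times B_R(w)$ forces $R\geq\tfrac12|x-y|$, so there are \emph{no} arbitrarily small diagonal balls containing $z$. Consequently the family of diagonal balls is not a differentiation basis for $\mu$ on $\mathbb{R}^{2N}$, and your stopping family can cover at best the near-diagonal portion of $\{G>\mathcal M\lambda\}$; the off-diagonal contribution to $\int_{\{G>\lambda\}}G^{p'}d\mu$ is left unaccounted for. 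This is precisely the obstruction that separates the nonlocal Gehring lemma from the local one. The paper handles it in a dedicated Step~3 by invoking the off-diagonal analysis of \cite[Subsection~4.3]{SM2}, which yields the bridging inequality \eqref{nondiagonal level set estimate} relating the full super-level-set integral of $G^{p'}$ to $\sum_j\mu(\mathcal B_j)$ plus a harmless $G^{p'\alpha}$-remainder. Without this ingredient your argument cannot close.

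There is also a structural difference worth flagging. The paper runs its exit-time not on $G^{p'\alpha}$-averages but on the composite functional $\Psi_M(x,R)$ (built from the $G^{p'}$-average and the weighted $F^{p_*+\mathfrak A}$-average), at \emph{diagonal} centers $(x,x)$ only, and simultaneously controls $\mathrm{Tail}(x,R)$ and $\Upsilon(x,R)$ via the auxiliary threshold $\lambda_1$ in \eqref{defn of lambda1}. This packaging delivers the scale-by-scale bounds \eqref{case1 data}, \eqref{case1 tail data} and \eqref{case1 higher data} automatically, so that after fixing $\sigma$ and $M$ through \eqref{condition of sigma and M} the entire right-hand side of \eqref{reverse holder estimate} collapses to $c\kappa\lambda$ in Case~1. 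Your alternative --- stop on $G^{p'\alpha}$ and treat the $F$-term by a post-hoc dichotomy on each ball --- reorganizes the paper's Case~1/Case~2 split and is workable for the diagonal part, but it neither supplies the missing off-diagonal piece nor gives you, from the stopping condition alone, the bound on the $F$-average at scale $2\mathcal B_j$ that the paper gets for free from $\Psi_M(x_j,2R_j)\leq\kappa\lambda$.
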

\begin{proof}
Let $\kappa\in(0,1)$ be a parameter which will be determined later in  \eqref{nondiagonal level set estimate}. 
	Define
	\begin{equation}
	\label{defn of lambda1}
		\lambda_{1}:=\frac{1}{\kappa}\sup_{\frac{\rho-r}{40^{N}}\leq R\leq\frac{\rho_{0}}{2}}\; \sup_{x\in B_{r}(x_{0})}\{\Psi_{M}(x,R)+\Upsilon(x,R)+\mathrm{Tail}(x,R)\}.
	\end{equation}
Now, we prove the lemma as below in five steps. 

\textbf{Step 1: Upper bound on $\lambda_{1}$.}
We estimate the upper bound of $\lambda_{1}$ as follows:
For any $x\in B_{r}(x_{0})$ and $\frac{\rho-r}{40^{N}}\leq R\leq\frac{\rho_{0}}{2}$, using the doubling property of $\mu$, we have 
\begin{equation*}
\begin{aligned}
 &\Upsilon(x,R)\leq c\left(\frac{2\rho_{0}}{\rho-r}\right)^{N+p}\Upsilon(x_{0},2\rho_{0})\quad\mbox{and }
 \Psi_{M}(x,R)\leq c\left(\frac{2\rho_{0}}{\rho-r}\right)^{N+p} \Psi_{M}(x_{0},2\rho_{0}).
			\end{aligned}
		\end{equation*}
 On the other hand, using H\"older's inequality and the similar tail estimates as in \eqref{eq31}, we see that
  \begin{equation*}
	\begin{aligned}
	 \mathrm{Tail}(x,R)&\leq c\sum_{i=0}^{l}\left(\frac{2\rho_{0}}{2^{i}R}\right)^{N+\epsilon p}\beta_{i}^{p-1}\left(\Xint-_{\mathcal{B}(x_{0},2\rho_{0})}G^{p'\alpha}d\mu\right)^{\frac{1}{p'\alpha}}+c\left(\Xint-_{\mathcal{B}(x_{0},2\rho_{0})}G^{p'\alpha}d\mu\right)^{\frac{1}{p'\alpha}}\\
	 &\quad+cT(u-(u)_{2\rho_{0},x_{0}};x,2\rho_{0})\\
	&\leq c \left(\frac{\rho_{0}}{\rho-r}\right)^{N+\epsilon p} \left(\Xint-_{\mathcal{B}(x_{0},2\rho_{0})}G^{p'\alpha}d\mu\right)^{\frac{1}{p'\alpha}}\\
	&\quad+c\left(\frac{\rho_{0}}{\rho-r}\right)^{N+sp}T(u-(u)_{2\rho_{0},x_{0}};x_{0},2\rho_{0}),
			\end{aligned}
		\end{equation*}
where in the last line, we have used the relation
	 \begin{equation*}
		 |y-x|\geq |y-x_{0}|-|x-x_{0}|\geq|y-x_{0}|\frac{\rho-r}{\rho_{0}},\quad \mbox{for }y\in B_{2\rho_{0}}(x)^c.
		\end{equation*}
  Thus, we get 
	\begin{equation}
	\label{upper bound of lambda1}
	 \begin{aligned}
		\lambda_{1}
		\leq \frac{c\left(\mathsf{data_{1}},M\right)}{\kappa}\left(\frac{\rho_{0}}{\rho-r}\right)^{N+p}\Xi_{0}.
			\end{aligned}
		\end{equation}

 \textbf{Step 2: Vitali Covering.} We start with an exit-time argument as in \cite{kuusi2} and \cite{SM} to cover the diagonal level set of $G$. We focus on handling the tail term which is different from the previous works. 
Define the diagonal level set of the functional $\Psi_{M}$ by
	\begin{equation}
		\label{diagona level set}
		D_{\kappa\lambda}:=\Bigg\{(x,x)\in \mathcal{B}(x_{0},r):\sup_{0\leq R\leq\frac{\rho-r}{40^{N}}}\Psi_{M}(x,R)>\kappa\lambda\Bigg\},
	\end{equation}
 for some $\lambda\geq\lambda_{1}$ which will be specified in  \eqref{nondiagonal level set estimate}. Note that for each $(x,x)\in\mathcal{B}(x_0,r)$ and $R\in [\frac{\rho-r}{40^N},\frac{\rho_0}{2}]$, we have $\Psi_M(x,R)\leq \kappa\lambda_1\leq \kappa\lambda$. Therefore, for each $(x,x)\in D_{\kappa\lambda}$, there exists a constant $0<R(x)\leq\frac{\rho-r}{40^{N}}$ such that  \begin{equation}
		\label{Psi exit}
		\Psi_{M}(x,R(x))\geq\kappa\lambda\quad\mbox{and}\quad\Psi_{M}(x,R)\leq\kappa\lambda,\quad\mbox{for any }R\in \Big(R(x),\frac{\rho-r}{40^N}\Big].
	\end{equation}
Using Vitali's covering lemma, we find that there is a collection $\{\mathcal{B}(x_{j},2R(x_{j}))\}_{j\in\mathbb{N}}$ of disjoint open sets with center $(x_{j},x_{j})\in D_{\kappa\lambda}$ such that
	\begin{equation}
	\label{off diagonal balls of super level set}
		D_{\kappa\lambda}\subset \bigcup_{j}\mathcal{B}(x_{j},10R(x_{j})).
	\end{equation}
 Let us write $R_{j}\equiv R(x_{j})$ and $\mathcal{B}_{j}\equiv \mathcal{B}(x_{j},R(x_{j}))$ for each positive integer $j$.
 From \eqref{Psi exit} and the doubling property of the measure $\mu$ (see Lemma \ref{measure mu}), we have
	\begin{equation*}
	\sum_{j}\int_{10\mathcal{B}_{j}}G^{p'}d\mu\leq \sum_{j}\mu(10\mathcal{B}_{j})[\Psi_{M}(x_{j},10R_j)]^{p'}\leq 10^{N+\epsilon p}(\kappa\lambda)^{p'}\sum_{j}\mu(\mathcal{B}_{j}).
	\end{equation*}
\textbf{Step 3: Off-diagonal estimate of $G$.} For this, we follow the method described in \cite[Subsection 4.3]{SM2}. Since we know that $u\in W^{s,p}(B_{2\rho_{0}}(x_{0}))\cap L^{\infty}(B_{2\rho_{0}}(x_{0}))$ with \eqref{off diagonal balls of super level set} and functions $G$ and $H$ which are described in \cite{SM2} are the same, an inspection of the subsection 4.3 in \cite{SM2} shows that it remains valid for our case, too.  Therefore, we have a desired result similar to \cite[Lemma 4.10]{SM2} as follows: There is a constant \begin{equation}
\label{condition of kappa}
    \kappa=\frac{\epsilon^{\frac{1}{p'\alpha}}}{c_{\kappa}}\quad \mbox{with } c_{\kappa}=c_{\kappa}(\mathsf{data_{1}})\geq1
\end{equation} such that
	\begin{equation}
	\label{nondiagonal level set estimate}
	  \int_{\mathcal{B}(x_{0},r)\cap\{G>\lambda\}}G^{p'}d\mu\leq 10^{N+p}\kappa^{p'}\lambda^{p'}\sum_{j}\mu(\mathcal{B}_{j})+c\lambda^{p'-p'\alpha}\int_{\mathcal{B}(x_{0},\rho)\cap\{G>\kappa\lambda\}}G^{p'\alpha}d\mu,
		\end{equation}
 for some constant $c_{0}=c_{0}(\mathsf{data_{1}})$, whenever 
	\begin{equation*}
	\lambda\geq \max\Bigg\{\lambda_{1},\frac{c_{1}}{\epsilon^{\frac{1}{p'}}}\left(\frac{\rho_{0}}{\rho-r}\right)^{2N+p}\Xi_{0}\Bigg\}=:\lambda_{2}.
   \end{equation*}

\textbf{Step 4: Estimate of $\mu(\mathcal{B}_{j})$.}
This step is to establish the existence of constants $c_{4}=c_{4}(\mathsf{data_{1}})$ and $c_{5}=c_{5}(\mathsf{data_{1}},\epsilon)$ such that
 \begin{equation}
  \label{key ball measure}
	\sum_{j}\mu(\mathcal{B}_{j})\leq\frac{c_{4}}{\epsilon^{p-p\alpha}\kappa^{p'\alpha}\lambda^{p'\alpha}}\int_{\mathcal{B}(x_{0},\rho)\cap\{G>\Tilde{\kappa}\kappa\lambda\}}G^{p'\alpha}d\mu+\frac{c_{5}\lambda_{1}^{\theta_{f}}}{
	(\hat{\kappa}\kappa\lambda)^{\Tilde{\theta}_{f}}}\int_{\mathcal{B}(x_{0},\rho)\cap\{F>\hat{\kappa}\kappa\lambda\}}F^{p_{*}+\mathfrak{A}}d\mu.
	\end{equation}
 Indeed, by \eqref{diagona level set}, it follows that at least one of the following inequalities hold:
	\begin{equation}
		\label{case1}
		\left(\Xint-_{\mathcal{B}_{j}}G^{p'}d\mu\right)^{\frac{1}{p'}}\geq\frac{\kappa\lambda}{2}\quad\mbox{or}
	\end{equation}
	\begin{equation}
		\label{case2}
		\frac{M[\mu(\mathcal{B}_{j})]^{\theta}}{\epsilon^{\frac{1}{p_{*}+\mathfrak{A}}-\frac{1}{p'}}}\left(\Xint-_{\mathcal{B}_{j}}F^{p_{*}+\mathfrak{A}}d\mu\right)^{\frac{1}{p_{*}+\mathfrak{A}}}\geq\frac{\kappa\lambda}{2}.    
	\end{equation}
\textbf{Case 1.} 
We assume that \eqref{case1} holds. Then, from \eqref{Reverse holder inequality}, we observe that
  \begin{equation}
	\label{case1 reverse holder}
	 \begin{aligned}
	 \kappa\lambda&\leq \frac{c\sigma^{-(p-1)}}{\epsilon^{\frac{1}{p'\alpha}-\frac{1}{p'}}}\left(\Xint-_{2\mathcal{B}_{j}}G^{p'\alpha}d\mu\right)^{\frac{1}{p'\alpha}}+\frac{c\sigma}{\epsilon^{\frac{1}{p'\alpha}-\frac{1}{p'}}}\sum_{i=0}^{l_{j}-1}\beta_{i}^{p-1}\left(\Xint-_{2^{i+1}\mathcal{B}_{j}}G^{p'\alpha}d\mu\right)^{\frac{1}{p'\alpha}}\\
	 &\quad +c\sigma\epsilon^{\frac{1}{p'}}[\epsilon\mu(2\mathcal{B}_{j})]^{\theta}T(u-(u)_{2^{l_{j}}R_{j},x_{0}};x_{j},2^{l_{j}}R_{j})
	 +\frac{c[\epsilon\mu(2\mathcal{B}_{j})]^{\theta}}{\epsilon^{\frac{1}{p_{*}+\mathfrak{A}}-\frac{1}{p'}}}\left(\Xint-_{2\mathcal{B}_{j}}F^{p_{*}+\mathfrak{A}}d\mu\right)^{\frac{1}{p_{*}+\mathfrak{A}}},
		\end{aligned}
	\end{equation}
 for some positive integer $l_{j}$ such that $\frac{\rho_{0}}{2}\leq 2^{l_{j}}R_{j}<\rho_{0}$. Note that since $R_{j}\leq\frac{\rho-r}{40^{N}}\leq\frac{\rho_{0}}{40^{N}}$, we have $l_{j}\geq3$. Therefore, by \eqref{Psi exit}, we see that
	\begin{equation}
		\label{case1 data}
	 \left(\Xint-_{2^{i}\mathcal{B}_{j}}G^{p'\alpha}d\mu\right)^{\frac{1}{p'\alpha}}\leq\kappa\lambda,
	\end{equation}
 for $i=0,1,\ldots,l_{j}-1$. With \eqref{defn of lambda1}, we have 
	\begin{equation}
	\label{case1 tail data}
	  \begin{aligned}
	  \kappa\lambda&\geq\mathrm{Tail}(x_{j},2^{l_{j}-1}R_{j})\\
	  &\geq\sum_{k=0}^{1}\beta_{k}^{p-1}\left(\Xint-_{2^{l_{j}-1+k}\mathcal{B}_{j}}G^{p'\alpha}d\mu\right)^{\frac{1}{p'\alpha}}+\epsilon^{\frac{1}{p'}}[\epsilon\mu(2\mathcal{B}_{j})]^{\theta}T(u-(u)_{2^{l_{j}}R_{j},x_{0}};x_{j},2^{l_{j}}R_{j})
		\end{aligned}
	\end{equation}
	and
	\begin{equation}
	  \label{case1 higher data}
	  \kappa\lambda\geq\frac{M[\mu(2\mathcal{B}_{j})]^{\theta}}{\epsilon^{\frac{1}{p_{*}+\mathfrak{A}}-\frac{1}{p'}}}\left(\Xint-_{2\mathcal{B}_{j}}F^{p_{*}+\mathfrak{A}}d\mu\right)^{\frac{1}{p_{*}+\mathfrak{A}}},
	\end{equation}
	where we have used the fact that
	\begin{equation*}
	    \frac{\rho-r}{40^{N}}\leq 2^{l_{j}-1}R_{j}<\frac{\rho_{0}}{2}.
	\end{equation*}
Applying \eqref{case1 data}, \eqref{case1 tail data} and \eqref{case1 higher data} to \eqref{case1 reverse holder}, we then discover that there are constant $c_{1}=c_{1}(\mathsf{data_{1}})$ and $c_{2}=c_{2}(\mathsf{data_{1}})$ such that
	\begin{align*}
		\kappa\lambda&\leq \frac{c\sigma^{-(p-1)}}{\epsilon^{\frac{1}{p'\alpha}-\frac{1}{p'}}}\left(\Xint-_{2\mathcal{B}_{j}}G^{p'\alpha}d\mu\right)^{\frac{1}{p'\alpha}}+\frac{c_{1}\sigma}{\epsilon^{\frac{1}{p'\alpha}-\frac{1}{p'}}}\kappa\lambda+\frac{c_{2}}{M}\kappa\lambda.
	\end{align*}
	By taking 
	\begin{equation}
		\label{condition of sigma and M}
		\sigma=\frac{\epsilon^{\frac{1}{p'\alpha}-\frac{1}{p'}}}{4c_{1}} \quad\mbox{and}\quad M=4c_{2},
	\end{equation}
	we see that
	\begin{equation*}
		\kappa\lambda\leq\frac{c}{\epsilon^{\frac{p}{p'\alpha}-\frac{p}{p'}}}\left(\Xint-_{2\mathcal{B}_{j}}G^{p'\alpha}d\mu\right)^{\frac{1}{p'\alpha}},
	\end{equation*}
	which yields
	\begin{equation*}
		\mu(\mathcal{B}_{j})(\kappa\lambda)^{p'\alpha}\leq\frac{c}{\epsilon^{p-p\alpha}}\int_{2\mathcal{B}_{j}}G^{p'\alpha}d\mu.
	\end{equation*}
 Since 
  \begin{equation*}
	\begin{aligned}
	  \frac{c}{\epsilon^{p-p\alpha}}\int_{2\mathcal{B}_{j}}G^{p'\alpha}d\mu&\leq\frac{c}{\epsilon^{p-p\alpha}}\left(\int_{2\mathcal{B}_{j}\cap\{G\leq\Tilde{\kappa}\kappa\lambda\}}G^{p'\alpha}d\mu+\int_{2\mathcal{B}_{j}\cap\{G\geq\Tilde{\kappa}\kappa\lambda\}}G^{p'\alpha}d\mu\right)\\
	  &\leq\frac{c_{3}}{\epsilon^{p-p\alpha}}\left(\left(\Tilde{\kappa}\kappa\lambda\right)^{p'\alpha}\mu(\mathcal{B}_{j})+\int_{2\mathcal{B}_{j}\cap\{G\geq\Tilde{\kappa}\kappa\lambda\}}G^{p'\alpha}d\mu\right),
		\end{aligned}
	\end{equation*}
 (thanks to the doubling property in Lemma \ref{measure mu}) by choosing $\Tilde{\kappa}=\frac{\epsilon^{\frac{p}{p'\alpha}-\frac{p}{p'}}}{(2c_{3})^{\frac{1}{p'\alpha}}}$, we obtain \begin{equation}
	\label{Case 1}
	 \mu(\mathcal{B}_{j})\leq\frac{c_{4}}{\epsilon^{p-p\alpha}(\kappa\lambda)^{p'\alpha}}\int_{2\mathcal{B}_{j}\cap\{G\geq\Tilde{\kappa}\kappa\lambda\}}G^{p'\alpha}d\mu,
  \end{equation}
 for some constant $c_{4}=c_{4}(\mathsf{data_{1}})$.
	
	\noindent
\textbf{Case 2.}
If \eqref{case2} occurs, we follow the proof exactly as in \cite[Subsection 4.2]{SM2} so that there is a constant $c_{5}=2\left(\frac{4M(L+1)}{\epsilon^{\frac{1}{p_{*}+\mathfrak{A}}-\frac{1}{p'}}}\right)^{\frac{p_{*}+\mathfrak{A}}{1-\left(p_{*}+\mathfrak{A}\right)\theta}}$ with $L=\mu(\mathcal{B}_{2})=c(N,p,\epsilon)$ such that
	\begin{align}\label{Case 2}
	 \mu(\mathcal{B}_{j})&\leq\frac{c_{5}\lambda_{1}^{(\left(p_{*}+\mathfrak{A}\right)+\delta_{f})\theta \left(p_{*}+\mathfrak{A}\right)/(1-\left(p_{*}+\mathfrak{A}\right)\theta)}}{(\hat{\kappa}\kappa\lambda)^{(1+\theta\delta_{f})\left(p_{*}+\mathfrak{A}\right)/(1-\left(p_{*}+\mathfrak{A}\right)\theta)}}\int_{\mathcal{B}_{j}\cap\{F>\hat{\kappa}\kappa\lambda\}}F^{p_{*}+\mathfrak{A}}d\mu \nonumber\\
	 &=\frac{c_{5}\lambda_{1}^{\theta_{f}}}{\left(\hat{\kappa}\kappa\lambda\right)^{\Tilde{\theta}_{f}}}\int_{\mathcal{B}_{j}\cap\{F>\hat{\kappa}\kappa\lambda\}}F^{p_{*}+\mathfrak{A}}d\mu,
	\end{align}
 provided 
 \begin{equation}
 \label{hatkappa}
     \hat{\kappa}\leq\left(\frac{1}{4}\right)^{\frac{1-\left(p_{*}+\mathfrak{A}\right)\theta}{p_{*}+\mathfrak{A}}}\frac{\epsilon^{\frac{1}{p_{*}+\mathfrak{A}}-\frac{1}{p'}}}{4M(L+1)}.
 \end{equation}
 Since $\left\{2\mathcal{B}_{j}\right\}$ is a collection of disjoint open sets contained in $\mathcal{B}(x_{0},\rho)$,  the two estimates in \eqref{Case 1} and \eqref{Case 2} imply \eqref{key ball measure}.

\textbf{Step 5: Conclusion.}
We are now ready to complete the proof. An elementary calculation gives
 \begin{equation*}
   \underbrace{\int_{\mathcal{B}(x_{0},r)\cap\left\{G>\Tilde{\kappa}\kappa\lambda\right\}}G^{p'}d\mu}_{I_{1}}\leq \underbrace{\lambda^{p'-p'\alpha}\int_{\mathcal{B}(x_{0},r)\cap\left\{G>\Tilde{\kappa}\kappa\lambda\right\}}G^{p'\alpha}d\mu}_{I_{2}}+\underbrace{\int_{\mathcal{B}(x_{0},r)\cap\left\{G>\lambda\right\}}G^{p'}d\mu}_{I_{3}}.
 \end{equation*}
 By \eqref{nondiagonal level set estimate}, we have
 \begin{align*}
     I_{3}\leq \underbrace{10^{N+p}\kappa^{p'}\lambda^{p'}\sum_{j}\mu(\mathcal{B}_{j})}_{I_{3,1}}+\underbrace{c\lambda^{p'-p'\alpha}\int_{\mathcal{B}(x_{0},\rho)\cap\{G>\kappa\lambda\}}G^{p'\alpha}d\mu}_{I_{3,2}},
 \end{align*}
 for any $\lambda\geq\lambda_{2}.$
 Using \eqref{key ball measure}, we have 
 \begin{align*}
     I_{3,1}\leq\frac{10^{N+p}c_{4}(\kappa\lambda)^{p'}}{\epsilon^{p-p\alpha}\kappa^{p'\alpha}\lambda^{p'\alpha}}\int_{\mathcal{B}(x_{0},\rho)\cap\{G>\Tilde{\kappa}\kappa\lambda\}}G^{p'\alpha}d\mu+\frac{10^{N+p}c_{5}(\kappa\lambda)^{p'}\lambda_{1}^{\theta_{f}}}{
	(\hat{\kappa}\kappa\lambda)^{\Tilde{\theta}_{f}}}\int_{\mathcal{B}(x_{0},\rho)\cap\{F>\hat{\kappa}\kappa\lambda\}}F^{p_{*}+\mathfrak{A}}d\mu,
 \end{align*}
 where $c_{4}=c_{4}(\mathsf{data_{1}})$ and $c_{5}=c_{5}(\mathsf{data_{1}},\epsilon)$. Note that \begin{equation}
 \label{kappa epsilon}
    \frac{\kappa^{p'-p'\alpha}}{\epsilon^{p-p\alpha}}=c\frac{\epsilon^{\frac{1-\alpha}{\alpha}}}{\epsilon^{p(1-\alpha)}}=c\epsilon^{-p+\frac{p}{m}}\geq c,
\end{equation} where we have used \eqref{condition of kappa}, \eqref{palphabeta} and $\epsilon\in(0,1)$. Therefore, combining the above estimate with $I_{3}$ and using \eqref{kappa epsilon}, we obtain
\begin{align*}
    I_{1}\leq \frac{c(\kappa\lambda)^{p'}}{\epsilon^{p-p\alpha}\kappa^{p'\alpha}\lambda^{p'\alpha}}\int_{\mathcal{B}(x_{0},\rho)\cap\{G>\Tilde{\kappa}\kappa\lambda\}}G^{p'\alpha}d\mu+\frac{10^{N+p}c_{5}(\kappa\lambda)^{p'}\lambda_{1}^{\theta_{f}}}{
	(\hat{\kappa}\kappa\lambda)^{\Tilde{\theta}_{f}}}\int_{\mathcal{B}(x_{0},\rho)\cap\{F>\hat{\kappa}\kappa\lambda\}}F^{p_{*}+\mathfrak{A}}d\mu.
\end{align*}
After some elementary algebraic manipulations, we observe that  
\begin{equation}
\label{almost final}
\begin{aligned}
    I_{1}\leq \frac{c(\Tilde{\kappa}\kappa\lambda)^{p'-p'\alpha}}{\epsilon^{p-p\alpha}\left(\Tilde{\kappa}\kappa\right)^{p'-p'\alpha}}\int_{\mathcal{B}(x_{0},\rho)\cap\{G>\Tilde{\kappa}\kappa\lambda\}}G^{p'\alpha}d\mu+\frac{10^{N+p}c_{5}(\kappa\lambda)^{p'}\lambda_{1}^{\theta_{f}}}{
	(\hat{\kappa}\kappa\lambda)^{\Tilde{\theta}_{f}}}\int_{\mathcal{B}(x_{0},\rho)\cap\{F>\hat{\kappa}\kappa\lambda\}}F^{p_{*}+\mathfrak{A}}d\mu,
\end{aligned}
\end{equation}
whenever $\lambda\geq\lambda_{2}$. We reformulate estimate \eqref{almost final} as follows:
\begin{equation*}
\begin{aligned}
 \int_{\mathcal{B}(x_{0},r)\cap\left\{G>\lambda\right\}}G^{p'}d\mu&\leq \frac{c}{\epsilon^{p-p\alpha}\left(\Tilde{\kappa}\kappa\right)^{p'-p'\alpha}}\lambda^{p'-p'\alpha}\int_{\mathcal{B}(x_{0},\rho)\cap\{G>\lambda\}}G^{p'\alpha}d\mu\\
 &\quad+\frac{c_{6}(\mathsf{data_{1}},\epsilon)\lambda_{1}^{\theta_{f}}}{\lambda^{\Tilde{\theta}_{f}-p'}}\int_{\mathcal{B}(x_{0},\rho)\cap\{F>\frac{\hat{\kappa}}{\Tilde{\kappa}}\lambda\}}F^{p_{*}+\mathfrak{A}}d\mu,
\end{aligned}
\end{equation*}
provided $\lambda\geq\Tilde{\kappa}\kappa\lambda_{2}$. Now we take a number $\hat{\kappa}>0$ sufficiently small so that \eqref{hatkappa} and $\kappa_{f}:=\frac{\hat{\kappa}}{\Tilde{\kappa}}\leq 1$ hold. Since $\Tilde{\kappa}\kappa=\frac{\epsilon^{\frac{1+p}{p'\alpha}-\frac{p}{p'}}}{c}$ for some constant $c=c(\mathsf{data_{1}})>1$ and $\lambda_{0}\geq \Tilde{\kappa}\kappa\lambda_{2}$ by \eqref{upper bound of lambda1}, we conclude that \eqref{level set estimate} holds whenever $\lambda\geq\lambda_{0}$. 
\qed
\end{proof}

\begin{Lemma}
\label{key result of SI}
Let $u$ be a local weak solution to \eqref{probM}. Take $\mathcal{B}(x_{0},2\rho_{0})\subset\widetilde{\Omega}\times\widetilde{\Omega}$ with $0<\rho_{0}\leq1$ and $x_{0}\in \widehat{\Omega}$ and write $\mathcal{B}\equiv\mathcal{B}(x_{0},\rho_{0})$ Then there exist positive constants $\epsilon$, $\delta\in(0,1)$, $\delta_{f}\in\left(0,\frac{\delta_{0}}{2}\right)$ and $c$ depending on $\mathsf{data_{1}}$ and $\delta_{0}$ such that 
	\begin{equation*}
	\begin{aligned}
		\left(\Xint-_{\frac{1}{2}\mathcal{B}}G(x,y,U)^{p'+\delta}d\mu\right)^{\frac{1}{p'+\delta}}&\leq c\left(\Xint-_{2\mathcal{B}}G(x,y,U)^{p'}d\mu\right)^{\frac{1}{p'}}+cT(u-(u)_{2\rho_{0},x_{0}};x_{0},2\rho_{0})\\
	 &\quad+c\left(\Xint-_{2\mathcal{B}}F^{p_{*}+\mathfrak{A}+\delta_{f}}d\mu\right)^{\frac{1}{p_{*}+\mathfrak{A}+\delta_{f}}}.
		\end{aligned}
		\end{equation*}
\end{Lemma}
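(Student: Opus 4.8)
The plan is to run a Gehring-type self-improvement argument starting from the super-level-set inequality \eqref{level set estimate} of Lemma \ref{level set estimate lemma}. First I would fix $\epsilon$: small enough that $\epsilon\in(0,\min\{s/p,1-s\})$, that every exponent occurring above is admissible (in particular $(p_{*}+\mathfrak{A})\theta<1$, so that $\theta_{f}$ and $\tilde\theta_{f}$ are finite and positive), and that $\alpha=m/p$ stays bounded away from $1$ --- indeed $\alpha\to N/(N+sp)<1$ as $\epsilon\downarrow0$, so $p'(1-\alpha)$ admits a positive lower bound depending only on $\mathsf{data_{1}}$. Only after $\epsilon$ is frozen would I pick $\delta\in(0,1)$ and $\delta_{f}\in(0,\delta_{0}/2)$ small, with $\delta\le\delta_{f}$. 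A standard truncation/limiting device, as in the proof of the Gehring-type lemma of \cite{SM2} (replace $G$ by $\min\{G,k\}$, keep the level parameter $\lambda$ below $k$, and let $k\to\infty$ by monotone convergence at the end, the extra terms proportional to $\int_{\mathcal B(x_{0},\rho)\cap\{G>k\}}G^{p'\alpha}\,d\mu$ produced by the truncation being reabsorbed), makes all the manipulations below rigorous despite the fact that $G\in L^{p'+\delta}(d\mu)$ is not available a priori.

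\textbf{Integration in the level parameter.} Fix $B\equiv B_{2\rho_{0}}(x_{0})\subset\widetilde\Omega$ with $x_{0}\in\widehat\Omega$ and $\rho_{0}\le1$, and let $\tfrac{\rho_{0}}{2}\le r<\rho\le\rho_{0}$. Rewriting \eqref{level set estimate} after multiplying by $\lambda^{p'}$, multiplying by $\delta\lambda^{\delta-1}$, integrating in $\lambda$ over $(\lambda_{0},\infty)$ and applying the layer-cake (Fubini) identity to each side: on the left one gets $\int_{\mathcal B(x_{0},r)}G^{p'+\delta}d\mu$ up to terms of the form $\lambda_{0}^{\delta}\int_{\mathcal B(x_{0},2\rho_{0})}G^{p'}d\mu$ that are moved to the right; the first term on the right becomes $\tfrac{\delta\,c_{\alpha}/\epsilon^{\theta_{u}}}{\delta+p'(1-\alpha)}\int_{\mathcal B(x_{0},\rho)}G^{p'+\delta}d\mu$, whose prefactor is $<1$ once $\delta$ is small --- here the lower bound on $p'(1-\alpha)$ is essential; and the $F$-term, after Fubini, is handled using the algebraic identity $\tilde\theta_{f}=(p_{*}+\mathfrak{A})+\theta_{f}$ (equivalently $\theta_{f}+(p_{*}+\mathfrak{A}+\delta_{f})-\tilde\theta_{f}=\delta_{f}$, which follows directly from the definitions of $\theta_{f},\tilde\theta_{f}$), together with the inequality $p_{*}+\mathfrak{A}\le p'$ --- automatic when $sp<N$, and exactly the reason $\mathfrak{A}$ is capped at $\tfrac{1}{2p}$ in \eqref{condition of A} when $sp\ge N$ --- and Jensen's inequality to pass from $F^{p_{*}+\mathfrak{A}}$ to $F^{p_{*}+\mathfrak{A}+\delta_{f}}$; a short case split on the sign of $\delta+p'-\tilde\theta_{f}$ (the borderline $=0$ case removed by perturbing $\delta$) shows that in every case this contribution has total homogeneity exactly $p'+\delta$ in $\Xi_{0}$. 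Recalling $\lambda_{0}\sim\epsilon^{-1/(p'\alpha)}\big(\rho_{0}/(\rho-r)\big)^{2N+p}\Xi_{0}$ from \eqref{def of lambda0}, the definition \eqref{theta0} of $\Xi_{0}$, and $\mathcal B(x_{0},\rho)\subset\mathcal B(x_{0},2\rho_{0})$, all the non-$G$ contributions on the right collapse into a single term of the shape $M/(\rho-r)^{a}$, with $M=c(\mathsf{data_{1}},\epsilon,\delta_{0})\,\mu(\mathcal B(x_{0},2\rho_{0}))\,\Xi_{0}^{p'+\delta}\,\rho_{0}^{a}$ and $a$ the largest of the finitely many exponents that appear.

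\textbf{Iteration and conclusion.} Writing $\varphi(\tau):=\int_{\mathcal B(x_{0},\tau)}\min\{G,k\}^{p'+\delta}d\mu<\infty$, the previous step yields
\begin{equation*}
\varphi(r)\le\frac{\delta\,c_{\alpha}/\epsilon^{\theta_{u}}}{\delta+p'(1-\alpha)}\,\varphi(\rho)+\frac{M}{(\rho-r)^{a}},\qquad\tfrac{\rho_{0}}{2}\le r<\rho\le\rho_{0},
\end{equation*}
with the coefficient of $\varphi(\rho)$ in $(0,1)$ and $M,a$ independent of $r,\rho$. Hence the iteration Lemma \ref{technial lemma} on $[\tfrac{\rho_{0}}{2},\rho_{0}]$ gives $\varphi(\tfrac{\rho_{0}}{2})\le c\,M\rho_{0}^{-a}=c\,\mu(\mathcal B(x_{0},2\rho_{0}))\,\Xi_{0}^{p'+\delta}$. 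Letting $k\to\infty$, dividing by $\mu(\mathcal B(x_{0},\tfrac{\rho_{0}}{2}))$ and using the doubling property of $\mu$ (Lemma \ref{measure mu}), and then taking the $(p'+\delta)$-th root, we reach $\big(\Xint-_{\frac12\mathcal B}G^{p'+\delta}d\mu\big)^{1/(p'+\delta)}\le c\,\Xi_{0}$. Finally, expanding $\Xi_{0}=\Upsilon(x_{0},2\rho_{0})+\Psi_{1}(x_{0},2\rho_{0})+T(u-(u)_{2\rho_{0},x_{0}};x_{0},2\rho_{0})$: the first summand is the $F$-term of the assertion, the first half of $\Psi_{1}$ is its $G$-term, and the second half of $\Psi_{1}$ --- which carries $F^{p_{*}+\mathfrak{A}}$ together with the factor $[\mu(\mathcal B(x_{0},2\rho_{0}))]^{\theta}/\epsilon^{1/(p_{*}+\mathfrak{A})-1/p'}$, bounded since $\rho_{0}\le1$ and $\epsilon$ is now fixed --- is dominated, by Jensen, by a multiple of the $F$-term of the assertion. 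This proves the lemma.

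\textbf{Main difficulty.} The substantive part is the exponent bookkeeping in the integration step: one must verify that the powers of $\lambda_{0}$ (equivalently of $\Xi_{0}$) collected from the leftover $\lambda_{0}^{\delta}\|G\|_{L^{p'}(d\mu)}^{p'}$, from the $F$-term (where the precise values of $\theta_{f},\tilde\theta_{f}$ enter through $\tilde\theta_{f}=(p_{*}+\mathfrak{A})+\theta_{f}$), and from the very definition of $\Xi_{0}$, add up to exactly the homogeneity degree $p'+\delta$, while no constant degenerates --- the two non-degeneracy facts being $p'(1-\alpha)>0$ (bounded below once $\epsilon$ is fixed) and $p_{*}+\mathfrak{A}\le p'$ (secured by the cap on $\mathfrak{A}$). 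A secondary, purely technical, point is making the Fubini and the final absorption rigorous without an a priori bound $G\in L^{p'+\delta}(d\mu)$, which is precisely what the truncation in $k$, combined with Lemma \ref{technial lemma}, takes care of.
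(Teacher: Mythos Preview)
Your strategy is the paper's strategy: truncate $G$, multiply the level-set inequality of Lemma~\ref{level set estimate lemma} by $\delta\lambda^{\delta-1}$, integrate in $\lambda$, use Fubini to produce $\int G_m^{\delta}G^{p'}\,d\mu$ on the left and absorb the resulting $G$-term on the right via the smallness of $\dfrac{c_{\alpha}\delta}{\epsilon^{\theta_u}(p'-p'\alpha+\delta)}$, then apply Lemma~\ref{technial lemma} and let the truncation go.  The paper carries this out with $\varphi(\tau)=\big(\Xint-_{\mathcal B_\tau}G_m^{\delta}G^{p'}\,d\mu\big)^{1/(p'+\delta)}$ and the same doubling/iteration conclusion you describe.

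The one place your sketch is genuinely looser than the paper is the parameter selection for the $F$-term, and the order of quantifiers you propose does not quite work.  You say ``fix $\epsilon$ first, then pick $\delta,\delta_f$ small'' and rely on a case split on the sign of $\delta+p'-\tilde\theta_f$.  The negative branch is fine (there the inner $\lambda$-integral is dominated by $\lambda_0^{\delta+p'-\tilde\theta_f}$ and the homogeneity in $\Xi_0$ comes out to $p'+\delta$ as you claim).  But in the positive branch Fubini produces $\int F^{(p_*+\mathfrak A)+(\delta+p'-\tilde\theta_f)}\,d\mu$, and to bound this by $\Upsilon$ you need $\delta+p'-\tilde\theta_f\le\delta_f$.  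For $sp<N$ one checks that $p'-\tilde\theta_f\big|_{\delta_f=0}=p'-\dfrac{p_*}{1-p_*\theta}>0$ for every fixed $\epsilon>0$, so for $\delta_f$ taken small you are always in the positive branch and the needed inequality $\delta+p'-\tilde\theta_f\le\delta_f$ \emph{fails}.  The paper resolves exactly this point by a coordinated choice: in the case $sp<N$ it fixes $\epsilon$ and $\delta_f$ together via \eqref{condition of last} (so that $\tilde\delta=0$ already yields \eqref{s parameter condition}), and in the case $sp\ge N$ it introduces an auxiliary shift $\tilde\delta\in[0,\theta_f]$ and uses $\lambda_0^{\theta_f}\le\lambda_0^{\theta_f-\tilde\delta}\lambda^{\tilde\delta}$ to manufacture an exponent in the window $(\delta,\delta_f)$.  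Once you replace your informal ``pick $\delta_f$ small'' by this coordinated choice (or, equivalently, choose $\epsilon$ small depending on $\delta_0$ so that your case split can be forced into the negative branch with some $\delta_f<\delta_0/2$), your argument and the paper's coincide.
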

\begin{proof}
Let $\frac{\rho_{0}}{2}<r<\rho<\rho_{0}$.
We now set the parameters $\delta$, $\delta_{f}$, $\Tilde{\delta}$ and $\epsilon$ depending only on $\mathsf{data_{1}}$ and $\delta_{0}$ such that
\begin{equation}
\label{s parameter condition}
  \frac{c_{\alpha}\delta}{\epsilon^{\theta_{u}}(p'-p'\alpha+\delta)}\leq\frac{1}{16}\quad\mbox{and}\quad \delta<p'-\Tilde{\theta}_{f}+\delta+\Tilde{\delta}<\delta_{f}.
\end{equation}
To this end, we consider the two cases depending on the relationship between $N$ and $sp$.

\noindent
\textbf{Case 1.} If $sp\geq N$.
 Set $\delta_{f}=\min\left\{\frac{\delta_{0}}{2},\frac{1}{2(p-1)}\right\}$
and choose
\begin{equation}
\label{epsilon spN}
    \epsilon<\min\left\{\frac{s}{p},(1-s)\right\}
\end{equation}
such that
\begin{equation}
\label{condition of tdelte}
    \frac{1}{p}-\frac{p-1}{2p}\delta_{f}<\theta,
\end{equation}
which is possible because 
\begin{equation*}
    \theta=\frac{s-\epsilon(p-1)}{N+\epsilon p}<\frac{s}{N}
\end{equation*}
is a decreasing function with respect to $\epsilon$ and $sp\geq N$.
Since 
\begin{equation*}
\begin{aligned}
\frac{1}{p-1}-\mathfrak{A}-\frac{\delta_{f}}{2}\left(1+\left(1+\mathfrak{A}\right)\theta\right)<\frac{1}{p-1}-\frac{\delta_{f}}{2}\quad\text{and}\quad p'\theta<p'\left(1+\mathfrak{A}\right)\theta,
\end{aligned}
\end{equation*}
 by \eqref{condition of tdelte}, we see that
\begin{equation*}
    \frac{1}{p-1}-\mathfrak{A}-\frac{\delta_{f}}{2}\left(1+\left(1+\mathfrak{A}\right)\theta\right)<p'\left(1+\mathfrak{A}\right)\theta,
\end{equation*}
which can be rewritten as
\begin{equation*}
    p'\left(1-\left(1+\mathfrak{A}\right)\theta\right)-(1+\mathfrak{A})<\frac{\delta_{f}}{2}\left(1+\left(1+\mathfrak{A}\right)\theta\right),
\end{equation*}
where we have used the following fact:
\begin{equation*}
    \frac{1}{p-1}-\mathfrak{A}=p'-(1+\mathfrak{A}).
\end{equation*}
Dividing each side by $(1-(1+\mathfrak{A})\theta)$, we find that
\begin{equation*}
    p'-\frac{(1+\mathfrak{A})}{(1-(1+\mathfrak{A})\theta}<\frac{\delta_{f}}{2}\frac{\left(1+\left(1+\mathfrak{A}\right)\theta\right)}{(1-(1+\mathfrak{A})\theta)}=\frac{\delta_{f}}{2}+\frac{\left(\left(1+\mathfrak{A}\right)\theta\right)}{(1-(1+\mathfrak{A})\theta)}\delta_{f},
\end{equation*}
which is equivalent to 
\begin{equation}
\label{p'andtdelta}
p'-\Tilde{\theta}_{f}<\frac{\delta_{f}}{2}.
\end{equation}
Let $\delta$ be any non-negative number satisfying
\begin{equation}
\label{condition of delta}
\delta<\min\left\{\frac{spp'\epsilon^{(sp^{2}+sp+N)/N}}{16c_{\alpha}(N+sp)},\frac{1}{8}\delta_{f}\right\}.
\end{equation}
In light of \eqref{epsilon spN} and \eqref{condition of delta}, we see that
\begin{equation*}
    \frac{spp'\epsilon^{(sp^{2}+sp+N)/N}}{16c_{\alpha}(N+sp+p)}\leq \frac{s^{2}p}{(p-1)16c_{\alpha}(N+sp+p)}\leq \frac{1}{(p-1)16c_{\alpha}}
\end{equation*}
and
\begin{equation*}
    2\delta<\delta_{f}\quad\text{with}\quad
    \frac{c_{\alpha}\delta}{\epsilon^{\theta_{u}}(p'-p'\alpha+\delta)}\leq\frac{1}{16}.
\end{equation*}
Moreover, we note by \eqref{condition of tdelte} and \eqref{condition of A} that
\begin{equation}
\label{p'andtdelta2}
    p'-\Tilde{\theta}_{f}+\theta_{f}=p'-(p_{*}+\mathfrak{A})\geq\frac{1}{2(p-1)}> \delta.
\end{equation}
Combine \eqref{p'andtdelta} and \eqref{p'andtdelta2} to find a constant $\Tilde{\delta}\in[0,\theta_{f}]$ such that
\begin{equation*}
    \delta<p'-\Tilde{\theta}_{f}+\delta+\Tilde{\delta}<\delta_{f}.
\end{equation*}
\noindent
\textbf{Case 2.} If $sp<N$.
With an elementary algebraic manipulation as in \cite[Theorem 5.1]{SM}, we find that there are constants $\epsilon\in(0,1)$ and $\delta_{f}\in\left(0,\min\left\{\frac{1}{2(p-1)},\frac{\delta_{0}}{2}\right\}\right)$  depending on $\mathsf{data_{1}}$ and $\delta_{0}$ satisfying the following:
\begin{equation}
\label{condition of last}
\begin{aligned}
 \frac{\epsilon p(p')^{2}}{N+\epsilon p}<\delta_{f}\leq\frac{\epsilon p(N+sp')}{N(s-\epsilon(p-1))}.
\end{aligned}
\end{equation}
Let $\delta$ be any nonnegative number such that
\begin{equation}
\label{condition of delta2}
\delta\leq\min\left\{\frac{spp'\epsilon^{(sp^{2}+sp)/N}}{16c_{\alpha}(N+sp+p)},\frac{1}{p-1}\frac{\epsilon pp'(N+sp')}{N^{2}+2N\epsilon p+\epsilon spp'}\right\},
\end{equation}
where $c_{\alpha}=c_{\alpha}(\mathsf{data_{1}})$ is determined as in \eqref{level set estimate2}.
By proceeding exactly as in \cite[Theorem 5.1]{SM}, we find that the conditions \eqref{condition of last} and \eqref{condition of delta2} imply \eqref{s parameter condition} by taking $\Tilde{\delta}=0$.

\noindent
From the choice of $\delta$, $\delta_{f}$, $\Tilde{\delta}$ and $\epsilon$ satisfying \eqref{s parameter condition}, we now prove the higher integrability of $G$.
We first apply Lemma \ref{level set estimate lemma} with $\epsilon$, $\delta_{f}$ and  $\delta$ which satisfy \eqref{s parameter condition}. Then we see that there are some constants $c_{\alpha}=c_{\alpha}(\mathsf{data_{1}})\geq1$,  $c_{f}=c_{f}(\mathsf{data_{1}},\delta_{0})\geq1$ and $\kappa_{f}=\kappa_{f}(\mathsf{data_{1}},\delta_{0})\in(0,1)$ such that
	\begin{equation}\label{level set estimate2}
	 \frac{1}{\lambda^{p'}}\int_{\mathcal{B}_{r}\cap\{G>\lambda\}}G^{p'}d\mu\leq \frac{c_{\alpha}}{\epsilon^{\theta_{u}}\lambda^{p'\alpha}}\int_{\mathcal{B}_{\rho}\cap\{G>\lambda\}}G^{p'\alpha}d\mu+\frac{c_{f}\lambda_{0}^{\theta_{f}}}{\lambda^{\Tilde{\theta_{f}}}}\int_{\mathcal{B}_{\rho}\cap\{F>\kappa_{f}\lambda\}}F^{p_{*}+\mathfrak{A}}d\mu,
		\end{equation}
  whenever $\lambda\geq\lambda_{0}$ with
	\begin{equation*}
	 \lambda_{0}:=c_{0}(\mathsf{data_{1}},\delta_{0})\left(\frac{\rho_{0}}{\rho-r}\right)^{2N+p}\Xi_{0}.
		\end{equation*} 		
Let us define a truncated function $G_{m}(x,y)=\min\{G(x,y),m\}$ for $(x,y)\in\mathcal{B}_{2\rho_{0}}$ with $m>\lambda_{0}$ and a measure $d\nu=G^{p'}d\mu$ in $\mathcal{B}_{2\rho_{0}}$.
We then observe that 
\begin{equation*}
\begin{aligned}
 \int_{\mathcal{B}_{r}}G_{m}^{\delta}G^{p'}d\mu&=\int_{\mathcal{B}_{r}}G_{m}^{\delta}d\nu\\
 &=\delta\int_{0}^{\infty}\lambda^{\delta-1}\nu\left(\mathcal{B}_{r}\cap\left\{G_{m}>\lambda\right\}\right)d\lambda\\
 &=\delta\int_{0}^{\lambda_{0}}\lambda^{\delta-1}\nu\left(\mathcal{B}_{r}\cap\left\{G_{m}>\lambda\right\}\right)d\lambda+\delta\int_{\lambda_{0}}^{\infty}\lambda^{\delta-1}\nu\left(\mathcal{B}_{r}\cap\left\{G_{m}>\lambda\right\}\right)d\lambda\\
 &\leq \lambda_{0}^{\delta}\int_{\mathcal{B}_{r}}G^{p'}d\mu+\delta\int_{\lambda_{0}}^{\infty}\lambda^{\delta-1}\nu\left(\mathcal{B}_{r}\cap\left\{G_{m}>\lambda\right\}\right)d\lambda\\
 &=\underbrace{\lambda_{0}^{\delta}\int_{\mathcal{B}_{r}}G^{p'}d\mu}_{I_{1}}+\underbrace{\delta\int_{\lambda_{0}}^{m}\lambda^{\delta-1}\int_{\mathcal{B}_{r}\cap\{G>\lambda\}}G^{p'}d\mu d\lambda}_{I_{2}},
\end{aligned}
\end{equation*}
where we have used an integral formula of a distribution function of $G$. We next estimate $I_{1}$ and $I_{2}$ as follows:\\
\textbf{Estimate of $I_{1}$.} By the definition of $\lambda_{0}$, we find 
\begin{equation*}
    I_{1}\leq \lambda_{0}^{\delta}\mu(\mathcal{B}_{2\rho_{0}})\Xint-_{\mathcal{B}_{2\rho_{0}}}G^{p'}d\mu\leq 
    c\lambda_{0}^{p'+\delta}\mu(\mathcal{B}_{2\rho_{0}}).
\end{equation*}
\textbf{Estimate of $I_{2}$.} Using \eqref{level set estimate2}, we discover that
\begin{equation*}
\begin{aligned}
I_{2}\leq \underbrace{\delta\int_{\lambda_{0}}^{m}\lambda^{\delta-1}\frac{c_{\alpha}\lambda^{p'}}{\epsilon^{\theta_{u}}\lambda^{p'\alpha}}\int_{\mathcal{B}_{\rho}\cap\{G>\lambda\}}G^{p'\alpha}d\mu d\lambda}_{I_{2,1}}+\underbrace{\delta\int_{\lambda_{0}}^{m}\lambda^{\delta-1}\frac{c_{f}\lambda_{0}^{\theta_{f}}\lambda^{p'}}{\lambda^{\Tilde{\theta_{f}}}}\int_{\mathcal{B}_{\rho}\cap\{F>\kappa_{f}\lambda\}}F^{p_{*}+\mathfrak{A}}d\mu d\lambda}_{I_{2,2}}.
\end{aligned}
\end{equation*}
By \eqref{s parameter condition}, we see that 
\begin{equation*}
\begin{aligned}
I_{2,1}&\leq \frac{c_{\alpha}\delta}{\epsilon^{\theta_{u}}}\int_{0}^{\infty}\lambda^{p'-p'\alpha+\delta-1}\int_{\mathcal{B}_{\rho}\cap\left\{G_{m}>\lambda\right\}}G^{p'\alpha}d\mu d\lambda\\
&=\frac{c_{\alpha}\delta}{\epsilon^{\theta_{u}}(p'-p'\alpha+\delta)}\int_{\mathcal{B}_{\rho}}G_{m}^{\delta+p'-p'\alpha}G^{p'\alpha}d\mu\\
&\leq \frac{1}{16}\int_{\mathcal{B}_{\rho}}G_{m}^{\delta+p'-p'\alpha}G^{p'\alpha}d\mu\leq\frac{1}{16}\int_{\mathcal{B}_{\rho}}G_{m}^{\delta}G^{p'}d\mu.
\end{aligned}
\end{equation*}
We next estimate $I_{2,2}$ as follows:
\begin{equation*}
\begin{aligned}
I_{2,2}&\leq \lambda_{0}^{\theta_{f}-\Tilde{\delta}} \delta\int_{\lambda_{0}}^{m}c_{f}\lambda^{p'-\Tilde{\theta}_{f}+\delta+\Tilde{\delta}-1}\int_{\mathcal{B}_{\rho}\cap\{F>\kappa_{f}\lambda\}}F^{p_{*}+\mathfrak{A}}d\mu d\lambda\\
&\leq c\lambda_{0}^{\theta_{f}-\Tilde{\delta}}\mu(\mathcal{B}_{2\rho_{0}})\Xint-_{\mathcal{B}_{2\rho_{0}}}F^{p_{*}+\mathfrak{A}+\delta+p'-\Tilde{\theta}_{f}+\Tilde{\delta}}d\mu\\
&\leq c\mu(\mathcal{B}_{2\rho_{0}})\lambda_{0}^{\theta_{f}-\Tilde{\delta}}\left(\Xint-_{\mathcal{B}_{2\rho_{0}}}F^{p_{*}+\mathfrak{A}+\delta_{f}}d\mu\right)^\frac{p_{*}+\mathfrak{A}+\delta+\Tilde{\delta}+p'-\Tilde{\theta}_{f}}{p_{*}+\mathfrak{A}+\delta_{f}}\\
&\leq c\mu(\mathcal{B}_{2\rho_{0}})\lambda_{0}^{\theta_{f}-\Tilde{\delta}}\Upsilon_{0}(x_{0},2\rho_{0})^{p_{*}+\mathfrak{A}+\delta+\Tilde{\delta}+p'-\Tilde{\theta}_{f}}\leq c(\mathsf{data_{1}},\delta_{0})\mu(\mathcal{B}_{2\rho_{0}})\lambda_{0}^{p'+\delta},
\end{aligned}
\end{equation*}
where we have used \eqref{s parameter condition} with H\"older's inequality, \eqref{theta0} and \eqref{def of lambda0}. We combine estimates $I_{1}$ and $I_{2}$ to obtain
\begin{equation*}
    \int_{\mathcal{B}_{r}}G_{m}^{\delta}G^{p'}d\mu\leq \frac{1}{16}\int_{\mathcal{B}_{\rho}}G_{m}^{\delta}G^{p'}d\mu+c\mu(\mathcal{B}_{2\rho_{0}})\lambda_{0}^{p'+\delta}.
\end{equation*}
Due to the doubling property and \eqref{theta0}, we discover that
\begin{equation*}
\begin{aligned}
    \left(\frac{\mu(\mathcal{B}_{r})}{\mu(\mathcal{B}_{\rho})}\Xint-_{\mathcal{B}_{r}}G_{m}^{\delta}G^{p'}d\mu\right)^{\frac{1}{p'+\delta}}\leq \left(\frac{1}{16}\Xint-_{\mathcal{B}_{\rho}}G_{m}^{\delta}G^{p'}d\mu\right)^{\frac{1}{p'+\delta}}+c\left(\frac{\mu(\mathcal{B}_{2\rho_{0}})}{\mu(\mathcal{B}_{\rho})}\right)^{\frac{1}{p'+\delta}}\lambda_{0},
\end{aligned}    
\end{equation*}
and a few elementary manipulations with \eqref{def of lambda0} gives
\begin{equation*}
    \left(\Xint-_{\mathcal{B}_{r}}G_{m}^{\delta}G^{p'}d\mu\right)^{\frac{1}{p'+\delta}}\leq \frac{1}{2}\left(\Xint-_{\mathcal{B}_{\rho}}G_{m}^{\delta}G^{p'}d\mu\right)^{\frac{1}{p'+\delta}}+c\left(\frac{\rho_{0}}{\rho-r}\right)^{2N+p}\Xi_{0}.
\end{equation*}
Therefore, we rewrite the above inequality as
\begin{equation*}
    \varphi(r)\leq\frac{1}{2}\varphi(\rho)+c\left(\frac{\rho_{0}}{\rho-r}\right)^{2N+p}\Xi_{0},
\end{equation*}
where we have defined $\varphi(\tau):=\left(\Xint-_{\mathcal{B}_{\tau}}G_{m}^{\delta}G^{p'}d\mu\right)^{\frac{1}{p'+\delta}}$ for $\tau\in[\frac{\rho_{0}}{2},\rho_{0}]$. By Lemma \ref{technial lemma}, we obtain \begin{equation*}
    \Bigg(\Xint-_{\mathcal{B}_{\frac{\rho_{0}}{2}}}G_{m}^{\delta}G^{p'}d\mu\Bigg)^{\frac{1}{p'+\delta}}\leq c\Xi_{0},
\end{equation*}
where $c=c(\mathsf{data_{1}},\delta_{0})$ is independent of $m$. Thus, by taking $m\to\infty$, we conclude that
\begin{equation*}
    \Bigg(\Xint-_{\mathcal{B}_{\frac{\rho_{0}}{2}}}G^{p'+\delta}d\mu\Bigg)^{\frac{1}{p'+\delta}}\leq c\Xi_{0}.
\end{equation*}
Recalling the definition of $\Xi_0$ as stated in \eqref{theta0}, we complete the proof of the lemma.
\qed
\end{proof}
Since we have obtained a higher integrability result for $G$, we now prove our first main result.

\textbf{Proof of Theorem \ref{Main theorem1}.} For any $x_{0}\in \widehat{\Omega}$, there is a $\rho_{0}\in(0,1]$ such that
\begin{equation*}
    B_{2\rho_{0}}(x_{0})\subset\widetilde{\Omega}.
\end{equation*}
We now need to normalize the solution $u$. Define
\begin{equation*}
\begin{aligned}
 \Tilde{u}(x)=u(\rho_{0}x+x_{0}),\quad \Tilde{f}(x)=\rho_{0}^{sp}f(\rho_{0}x+x_{0})\quad \mbox{for }x\in B_{2};
\end{aligned}
\end{equation*}
\begin{equation*}
\begin{aligned}
 &\Tilde{a}(x,y,z,w)=a(\rho_{0}x+x_{0},\rho_{0}y+x_{0},z,w),\\ 
 &\Tilde{b}(x,y)=\rho_{0}^{sp-tq}b(\rho_{0}x+x_{0},\rho_{0}y+x_{0})\quad\mbox{for } x,y\in \mathbb{R}^{N}\times\mathbb{R}^{N}.
\end{aligned}
\end{equation*}
Then we have 
\begin{equation*}
    \mathcal{L}_{\Tilde{a},\Tilde{b}}\Tilde{u}=\Tilde{f}\quad \mbox{in }B_{2},
\end{equation*}with 
\begin{align*}
			&0<\Lambda^{-1} \leq \Tilde{a}(x,y,z,w)\leq \Lambda  \quad\mbox{and } \\
			&0 \leq \Tilde{b}(x,y)\leq \rho_{0}^{sp-tq}\Lambda.
\end{align*}
For $\mathcal{B}:=B_1\times B_1$, by Lemma \ref{key result of SI}, there are sufficiently small positive numbers $\delta_{1}$, $\epsilon\in(0,1)$, $\delta_{f}\in(0,\delta_{0})$ and $c$ depending on $\mathsf{data_{1}}$ and $\delta_{0}$ such that 
	\begin{align}\label{G estimate}
		\left(\Xint-_{\frac{1}{2}\mathcal{B}}G(x,y,\Tilde{U})^{p'(1+\delta_{1})}d\mu\right)^{\frac{1}{p'(1+\delta_{1})}}&\leq c\left(\Xint-_{2\mathcal{B}}G(x,y,\Tilde{U})^{p'}d\mu\right)^{\frac{1}{p'}}+c\Tilde{T}(\Tilde{u}-(\Tilde{u})_{2,0};0,2\rho_{0})\nonumber\\
	 &\quad+c\left(\Xint-_{2\mathcal{B}}\Tilde{F}^{p_{*}+\mathfrak{A}+\delta_{f}}d\mu\right)^{\frac{1}{p_{*}+\mathfrak{A}+\delta_{f}}},
	\end{align}
and 
\begin{equation}
\label{range of delta1}
s+\frac{p\epsilon\delta_{1}}{p(1+\delta_{1})}<1,
\end{equation}
where 
\begin{equation*}
    \Tilde{T}(\Tilde{u}-(\Tilde{u})_{2,0};0,2\rho_{0}):=\int_{\mathbb{R}^{N}\setminus B_{2}}\left(\frac{|\Tilde{u}(y)-(\Tilde{u})_{2,0}|^{p-1}}{|y|^{N+sp}}+\|\Tilde{b}\|_{L^{\infty}}\frac{|\Tilde{u}(y)-(\Tilde{u})_{2,0}|^{q-1}}{|y|^{N+tq}}\right)dy.
\end{equation*} Since $\Tilde{u}\in L^{\infty}(B_{2})$ and $tq\leq sp$, we see that 
\begin{equation}
\label{bounded of u and G}
    \int_{2\mathcal{B}}G(x,y,\Tilde{U})^{p'}d\mu\leq c[\Tilde{u}]_{W^{s,p}(B_{2})}^{p}.
\end{equation}
From \cite[Proposition 2.5]{Ni} with \eqref{range of delta1}, we discover that
\begin{equation}
\label{embedding for last}
    [\Tilde{u}]^{p-1}_{W^{s+\frac{N\delta}{p(1+\delta)},p(1+\delta)}(B_{1/2})}\leq c[\Tilde{u}]^{p-1}_{W^{s+\frac{p\epsilon\delta_{1}}{p(1+\delta_{1})},p(1+\delta_{1})}(B_{1/2})}\leq c\left(\int_{\frac{1}{2}\mathcal{B}}G(x,y,\Tilde{U})^{p'(1+\delta_{1})}d\mu\right)^{\frac{1}{p'(1+\delta_{1})}},
\end{equation}
where $\delta=\delta(\mathsf{data_{1}},\delta_{0})$ is a sufficiently small positive number such that
	\begin{equation*}
    \frac{N\delta}{p(1+\delta)}< \frac{p\epsilon\delta_{1}}{p(1+\delta_{1})}\quad\mbox{and}\quad p\delta\leq p\delta_{1}.
\end{equation*}
We combine the estimates \eqref{G estimate}, \eqref{bounded of u and G} and \eqref{embedding for last} to obtain
\begin{equation*}
\begin{aligned}
    &\left(\int_{B_{\frac{1}{2}}}\int_{B_{\frac{1}{2}}}\left(\frac{|\Tilde{u}(x)-\Tilde{u}(y)|^{p}}{|x-y|^{N+ps}}\right)^{(1+\delta)}dxdy\right)^{\frac{1}{p'(1+\delta)}}\\
    &\quad\leq c\Bigg[[\Tilde{u}]_{W^{s,p}(B_{2})}^{p-1}+\Tilde{T}(\Tilde{u}-(\Tilde{u})_{2,0};0,2)+\left(\int_{B_{2}}|\Tilde{f}(x)|^{p_{*}+\mathfrak{A}+\delta_{f}}dx\right)^{\frac{1}{p_{*}+\mathfrak{A}+\delta_{f}}}\Bigg].
\end{aligned}
\end{equation*}
By scaling back, noting $u\in L^{\infty}(B_{2\rho_{0}}(x_{0}))$ and using H\"older's inequality with $\mathfrak{A}+\delta_{f}<\delta_{0}$, we conclude  the estimate \eqref{self improving estimate}. Finally, the standard covering argument gives that $u\in W^{s+\frac{N\delta}{p(1+\delta)},p(1+\delta)}_{\mathrm{loc}}(\widetilde\Omega)$. 
\qed

\section{The H\"older continuity}
We first focus on a local weak solution 
\begin{equation*}
    u\in \mathcal{W}_{\mathrm{loc}}(\Omega)\cap L^{p-1}_{ps}(\mathbb{R}^N)\cap L^{q-1}_{qt}(\mathbb{R}^N)
\end{equation*}
to
\begin{equation*}
    \mathcal{L}_{a,b}\ u=f\quad\text{in }\Omega,
    \tag{$\mathcal{PA}$}\label{probA}
\end{equation*}
where the coefficient function $a$ is an $VMO$ function and is independent of the solution $u$, and $f\in L^\gamma_{\mathrm{loc}}(\Omega)$ with $\gamma>\max\{1,\frac{N}{ps}\}$.
Then, from \cite[Theorem 4.5]{JDSPDPP} and using Caccioppoli type estimate of Lemma \ref{Caccio estimate} (to control the quantity $[u]_{W^{s,p}}$, appearing there), we can get the following continuity result.
\begin{Lemma}\label{thmingenk}
Suppose that $2\leq p\leq q\leq ps/t$ and that the functions $a(\cdot,\cdot)$ and $b(\cdot,\cdot)$ are locally translation invariant in $\Omega\times\Omega$. Let $u$ be a local weak solution to the problem \eqref{probA} with $f\equiv 0$. Then $u\in C^{\alpha}_\mathrm{ loc}(\Omega)$ for all $\alpha\in (0,{\Theta_{0}})$, where $\Theta_{0}:=\min\left\{\frac{ps}{p-1},1\right\}$.\\
More precisely, for $B_{2\rho_0}\equiv B_{2\rho_0}(x_{0})\Subset\Omega$ with $\rho_0\in (0,1]$ and for all $\alpha\in (0,{\Theta_0})$, there exists a positive constant $c$ depending only on $\mathsf{data}$ and $\alpha$ such that 
	\begin{align*}
	 [u]_{C^{\alpha}(B_{\rho_0/4})}
	  \leq \frac{c}{\rho_0^{\alpha}}\Big[\|u\|_{L^\infty(B_{\rho_0/2})}+1+T_{ps}(u;x_0,\rho_0/2)+T_{qt}(u;x_0,\rho_0/2) \Big]^{\beta(q-p)+1},
	\end{align*}
 where $\beta\in\mathbb{N}$ depends only on $N,p,s$ and $\alpha$.
	\end{Lemma}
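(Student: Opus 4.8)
The plan is to obtain the estimate from the interior H\"older bound of \cite[Theorem 4.5]{JDSPDPP}, feeding into it the Caccioppoli inequality of Lemma \ref{Caccio estimate} to eliminate the local energy term that appears there. First I would check that the hypotheses of \cite[Theorem 4.5]{JDSPDPP} are in force: $2\le p\le q\le ps/t$ is assumed, $f\equiv0$ makes the integrability condition on $\gamma$ vacuous, and local translation invariance of $a(\cdot,\cdot)$ and $b(\cdot,\cdot)$ in particular gives continuity of $b$ along the diagonal of $\Omega\times\Omega$, which is the structural condition required there. Consequently $u\in C^{\alpha}_{\mathrm{loc}}(\Omega)$ for all $\alpha\in(0,\Theta_0)$ with $\Theta_0=\min\{\tfrac{ps}{p-1},1\}$, together with a quantitative bound whose right-hand side is $c\rho_0^{-\alpha}$ times a power of the form $\beta_0(q-p)+1$ (with $\beta_0=\beta_0(N,p,s,\alpha)\in\mathbb N$) of a bracket containing $\|u\|_{L^\infty(B_{\rho_0/2})}$, the tails $T_{ps}(u;x_0,\tfrac{\rho_0}{2})$ and $T_{qt}(u;x_0,\tfrac{\rho_0}{2})$, and a normalized local energy $\rho_0^{s-N/p}[u]_{W^{s,p}(B_{\rho_0/2})}$ of $u$ on $B_{\rho_0/2}(x_0)$; it is this last term that must be removed.

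For that, after a routine covering/scaling reduction to the case $\rho_0\le\tfrac18$, I would apply Lemma \ref{Caccio estimate} on the concentric balls $B_{\rho_0/2}(x_0)\subset B_{3\rho_0/4}(x_0)$ with $f\equiv0$ and a cutoff $\psi\equiv1$ on $B_{\rho_0/2}(x_0)$. Restricting the double integral on the left-hand side to $B_{\rho_0/2}(x_0)\times B_{\rho_0/2}(x_0)$, where $\psi\equiv1$, it dominates $c\rho_0^{-N}[u]_{W^{s,p}(B_{\rho_0/2})}^{p}$; on the right-hand side, $\Xint-_{B}|u|^{p}$ and $\Xint-_{B}|u|^{q}$ are bounded by $\|u\|_{L^\infty(B_{3\rho_0/4})}^{p}$ and $\|u\|_{L^\infty(B_{3\rho_0/4})}^{q}$, the factor $\Xint-_{B}\psi^{q}|u|$ by $\|u\|_{L^\infty(B_{3\rho_0/4})}$, and the nonlocal integrals $\int_{\mathbb R^N\setminus B}\tfrac{|u(y)|^{p-1}}{|x_0-y|^{N+ps}}\,dy$ and $\|b\|_{L^\infty}\int_{\mathbb R^N\setminus B}\tfrac{|u(y)|^{q-1}}{|x_0-y|^{N+qt}}\,dy$ by $c\rho_0^{-ps}T_{ps}(u;x_0,\tfrac{\rho_0}{2})^{p-1}$ and $c\rho_0^{-qt}T_{qt}(u;x_0,\tfrac{\rho_0}{2})^{q-1}$ (enlarging the tail radius from $\tfrac{3\rho_0}{4}$ to $\tfrac{\rho_0}{2}$ by monotonicity), using $qt\le ps$ and $\rho_0\le1$ to absorb $\rho_0^{-qt}\le\rho_0^{-ps}$. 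This yields, for some $c=c(\mathsf{data})$,
\[
[u]_{W^{s,p}(B_{\rho_0/2})}^{p}\le c\,\rho_0^{N-ps}\Big(\|u\|_{L^\infty(B_{3\rho_0/4})}^{p}+\|u\|_{L^\infty(B_{3\rho_0/4})}^{q}+\big(T_{ps}(u;x_0,\tfrac{\rho_0}{2})^{p-1}+T_{qt}(u;x_0,\tfrac{\rho_0}{2})^{q-1}\big)\|u\|_{L^\infty(B_{3\rho_0/4})}\Big).
\]

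Finally I would substitute this into the bound of \cite[Theorem 4.5]{JDSPDPP}. Setting $K:=\|u\|_{L^\infty(B_{\rho_0/2})}+1+T_{ps}(u;x_0,\tfrac{\rho_0}{2})+T_{qt}(u;x_0,\tfrac{\rho_0}{2})\ge1$ (the ``$+1$'' making all terms comparable and letting Young's inequality and the powers of $\rho_0\le1$ be absorbed), the displayed inequality gives $\rho_0^{s-N/p}[u]_{W^{s,p}(B_{\rho_0/2})}\le c\,\rho_0^{\,s-N/p+(N-ps)/p}K^{q/p}=c\,K^{q/p}\le K^{(q-p)+1}$, so replacing the energy term in the bracket of \cite[Theorem 4.5]{JDSPDPP} by this power of $K$ only raises the overall exponent by a bounded multiple of $(q-p)$; hence $[u]_{C^{\alpha}(B_{\rho_0/4})}\le c\,\rho_0^{-\alpha}K^{\beta(q-p)+1}$ for some $\beta=\beta(N,p,s,\alpha)\in\mathbb N$, which is the claimed estimate, and $u\in C^{\alpha}_{\mathrm{loc}}(\Omega)$ follows by a standard covering argument. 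The main obstacle is precisely this exponent bookkeeping: one has to track with what power $[u]_{W^{s,p}}$ enters \cite[Theorem 4.5]{JDSPDPP} and confirm that, after inserting the Caccioppoli bound with its ``double-phase'' term $\|u\|_{L^\infty}^{q}$, all accumulated powers of $K$ still fit under a single exponent $\beta(q-p)+1$ with $\beta$ depending only on $N,p,s,\alpha$ — this graded structure being exactly what the De Giorgi-type iteration underlying \cite[Theorem 4.5]{JDSPDPP} delivers.
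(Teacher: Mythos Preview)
Your proposal is correct and follows essentially the same approach as the paper: the paper does not give a detailed proof of this lemma but simply states that it follows from \cite[Theorem 4.5]{JDSPDPP} together with the Caccioppoli estimate of Lemma \ref{Caccio estimate} ``to control the quantity $[u]_{W^{s,p}}$, appearing there,'' which is precisely what you carry out. The only minor wrinkle is the ball bookkeeping---applying Lemma \ref{Caccio estimate} on $B_{\rho_0/2}\subset B_{3\rho_0/4}$ produces $\|u\|_{L^\infty(B_{3\rho_0/4})}$ rather than $\|u\|_{L^\infty(B_{\rho_0/2})}$ on the right---but this is harmless and is absorbed either by a trivial adjustment of the intermediate radii or by noting that the paper's stated estimate is already schematic in this respect.
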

Concerning the case when the coefficients need not be locally translation invariant, we have the following approximation lemma. 
\begin{Lemma}\label{approximation argument}
For any $\epsilon>0$, there exists a small $\delta=\delta(\mathsf{data},\epsilon)>0$ such that for any weak solution $u$ to \eqref{probA} in $B_{4}\equiv B_{4}(0)$ with 
\begin{equation*}
    \sup_{B_{4}}|u|\leq 1,\quad T_{ps}(u;0,4)+T_{qt}(u;0,4)\leq 1
\end{equation*}
and 
\begin{equation*}
    \left(\Xint-_{B_{4}}|f|^{\gamma}dx\right)^{1/\gamma}+
    \Xint-_{B_{4}}\Xint-_{B_{4}}(|a(x,y)-(a)_{4,0}|+|b(x,y)-(b)_{4,0}|)dxdy\leq \delta,
\end{equation*}
there exists a weak solution $v$ to 
\begin{align}\label{eq aux comp}
   \left\{
\begin{alignedat}{3}
\mathcal{L}_{\Tilde{a},\Tilde{b}}v &=0 &&\qquad \mbox{in  $B_{2}$}, \\
v&=u&&\qquad  \mbox{in  $\mathbb{R}^N\setminus B_{2}$},
\end{alignedat} \right. 
\end{align}
such that 
\begin{equation*}
    \|u-v\|_{L^{\infty}(B_{1})}\leq \epsilon,
\end{equation*}
where 
\begin{equation}
\label{tilde opr}
\begin{aligned}
\Tilde{a}(x,y)=\begin{cases}
(a)_{4,0}\quad&\text{if }(x,y)\in B_{4}\times B_{4},\\
a(x,y)\quad&\text{otherwise},
\end{cases}
\quad\mbox{and}\quad
\Tilde{b}(x,y)=\begin{cases}
(b)_{4,0}\quad&\text{if }(x,y)\in B_{4}\times B_{4},\\
b(x,y)\quad&\text{otherwise}.
\end{cases}
\end{aligned}
\end{equation}
\end{Lemma}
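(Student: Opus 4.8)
The plan is to argue by contradiction and compactness, which is the standard route for this type of "if the oscillation of the coefficients and the forcing are small, then the solution is close to a solution of the frozen-coefficient problem" statement. Suppose the conclusion fails. Then there is an $\varepsilon_0>0$ and a sequence of weak solutions $u_k$ to $\mathcal L_{a_k,b_k}u_k=f_k$ in $B_4$, with coefficients $a_k,b_k$ satisfying (A1)--(A2), such that $\sup_{B_4}|u_k|\le 1$, $T_{ps}(u_k;0,4)+T_{qt}(u_k;0,4)\le 1$, and
\begin{equation*}
\Big(\Xint-_{B_4}|f_k|^\gamma\,dx\Big)^{1/\gamma}+\Xint-_{B_4}\Xint-_{B_4}\big(|a_k(x,y)-(a_k)_{4,0}|+|b_k(x,y)-(b_k)_{4,0}|\big)\,dx\,dy\le \tfrac1k,
\end{equation*}
but $\|u_k-v_k\|_{L^\infty(B_1)}>\varepsilon_0$ for the corresponding solution $v_k$ of \eqref{eq aux comp}. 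The first step is to extract compactness for $\{u_k\}$: using the Caccioppoli estimate of Lemma \ref{Caccio estimate} (with the uniform bounds $\sup_{B_4}|u_k|\le1$ and the tail bound) one obtains a uniform bound for $[u_k]_{W^{s,p}(B_\rho)}$ and for the weighted Gagliardo seminorm $[u_k]_{W^{t,q}_{b_k}(B_\rho)}$ on, say, $B_{3}$; combined with $L^\infty$-boundedness and, crucially, the higher H\"older regularity of Lemma \ref{thmingenk} applied to $u_k$ (which gives a uniform $C^{0,\alpha}$ bound on compact subsets, since $f_k$ is uniformly bounded in $L^\gamma$ — here one must first absorb the nonzero $f_k$, but $\|f_k\|_{L^\gamma}\to 0$ so this is harmless), Arzel\`a--Ascoli gives $u_k\to u_\infty$ locally uniformly in $B_3$, and up to a subsequence weakly in $W^{s,p}_{\mathrm{loc}}$. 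The averaged coefficients $(a_k)_{4,0},(b_k)_{4,0}$ are bounded in $[\Lambda^{-1},\Lambda]$ and $[0,\Lambda]$ respectively, so they converge (subsequence) to constants $\bar a\in[\Lambda^{-1},\Lambda]$, $\bar b\in[0,\Lambda]$, and by the VMO-type smallness the frozen kernels $\tilde a_k,\tilde b_k$ agree with these constants on $B_4\times B_4$ in the limit.

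The second step is to pass to the limit in the weak formulations. For the $v_k$: since $v_k=u_k$ outside $B_2$ and $u_k$ is uniformly bounded in $L^\infty\cap L^{p-1}_{ps}\cap L^{q-1}_{qt}$, the natural energy estimate for \eqref{eq aux comp} (test with $v_k-u_k\in \mathcal W(B_2)$, use ellipticity and the monotonicity inequality \eqref{eqmon}) bounds $v_k$ uniformly in $\mathcal W(B_2)$; moreover $v_k$ satisfies the same type of Caccioppoli and — being a solution of a translation-invariant equation — the same $C^{0,\alpha}$ estimate via Lemma \ref{thmingenk}, so $v_k\to v_\infty$ locally uniformly in $B_2$ and weakly in $\mathcal W_{\mathrm{loc}}(B_2)$, with $v_\infty=u_\infty$ outside $B_2$. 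Passing to the limit in the equation for $u_k$: for a fixed test function $\phi\in\mathcal W(\Omega)$ with compact support in $B_3$, one splits each double integral into the region near the diagonal (where $|u_k(x)-u_k(y)|^{p-2}(u_k(x)-u_k(y))\to|u_\infty(x)-u_\infty(y)|^{p-2}(u_\infty(x)-u_\infty(y))$ by uniform convergence, with a dominating function from the uniform seminorm bound and Vitali/dominated convergence) and the tail region (controlled uniformly by the tail hypothesis). The terms $(a_k(x,y,\cdot)-\bar a)$ and $(b_k-\bar b)$ contribute an error bounded, via H\"older's inequality and the $L^\infty$ bounds on $u_k,\phi$, by a constant times the $L^1$ oscillation of the coefficients on $B_4\times B_4$ plus their VMO modulus on smaller balls — but we only have smallness of the $L^1$ average on $B_4\times B_4$, so here one exploits the boundedness of the kernels together with the fact that on the diagonal region $|x-y|$ is comparable and the product $|u_k(x)-u_k(y)||\phi(x)-\phi(y)|/|x-y|^{N+ps}$ is integrable uniformly; a careful bookkeeping shows the coefficient-oscillation error tends to $0$. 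Hence $u_\infty$ is a weak solution of $\mathcal L_{\bar a,\bar b}u_\infty=0$ in $B_3$; similarly $v_\infty$ solves $\mathcal L_{\bar a,\bar b}v_\infty=0$ in $B_2$ with $v_\infty=u_\infty$ outside $B_2$.

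The third step is uniqueness for the limiting (constant-coefficient) Dirichlet problem: since $u_\infty-v_\infty\in\mathcal W(B_2)$ with compact support after extending by zero, testing the difference of the two equations with $u_\infty-v_\infty$ and invoking the strict monotonicity \eqref{eqmon} for both the $p$- and $q$-parts (both exponents $\ge 2$, coefficients nonnegative with $\bar a\ge\Lambda^{-1}>0$) forces $[u_\infty-v_\infty]_{W^{s,p}(\mathbb R^N)}=0$, hence $u_\infty=v_\infty$ on $B_2$. Then $\|u_\infty-v_\infty\|_{L^\infty(B_1)}=0$, while on the other hand the uniform convergences give $\|u_\infty-v_\infty\|_{L^\infty(B_1)}=\lim_k\|u_k-v_k\|_{L^\infty(B_1)}\ge\varepsilon_0>0$, a contradiction. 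I expect the main obstacle to be the second step — specifically, making the passage to the limit in the nonlocal bilinear forms rigorous in the presence of merely VMO (not continuous) coefficients, since one cannot use pointwise convergence of $a_k$; the resolution is to keep the coefficient always bounded, isolate its contribution as an oscillation term against a fixed integrable kernel weight, and control it by the (vanishing) $L^1$-average hypothesis, while handling the nonlinear $[\cdot]^{p-1}$ factors by the uniform local H\"older/Sobolev compactness. A secondary technical point is ensuring the tail terms $L^{p-1}_{ps}$, $L^{q-1}_{qt}$ pass to the limit; this follows from the uniform tail bound in the hypothesis together with uniform local convergence, via a standard dominated-convergence splitting at radius $R$ and letting $R\to\infty$.
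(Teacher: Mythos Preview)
Your contradiction setup is exactly the one the paper uses, but from that point on the paper proceeds quite differently, and the difference is not cosmetic: it is what makes the argument close.

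The paper does \emph{not} pass to a limiting equation for $u_\infty$ and then invoke uniqueness. Instead it works directly with $w_k:=u_k-v_k$, tests \emph{both} weak formulations with $w_k$, and subtracts. Monotonicity \eqref{eqmon} gives $I_0\ge \Lambda^{-1}[w_k]_{W^{s,p}(\mathbb R^N)}^p$ on the left; on the right the only nontrivial terms are the coefficient--oscillation errors
\[
\int_{B_4}\int_{B_4}\big(\tilde a_k-a_k\big)\,[u_k(x)-u_k(y)]^{p-1}(w_k(x)-w_k(y))\,d\mu_1
\]
(and the $q$--analogue), together with $\int f_kw_k$. The crucial device for the oscillation errors is H\"older's inequality \emph{coupled with the self--improving property of Theorem \ref{Main theorem1}}: because $|u_k(x)-u_k(y)|^p/|x-y|^{N+ps}\in L^{1+\sigma}_{\mathrm{loc}}$ with a uniform bound, one can split off a factor $\big(\Xint-\!\!\Xint-|\tilde a_k-a_k|\,dx\,dy\big)^{\sigma(p-1)/(p(1+\sigma))}\le (1/k)^{\sigma/(1+\sigma)}$. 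This yields $[w_k]_{W^{s,p}(\mathbb R^N)}^p\le c(1/k)^{\sigma/(1+\sigma)}\to0$, hence $w_k\to0$ in $L^{p_s^*}$; Arzel\`a--Ascoli (uniform local H\"older bounds for $u_k$ and $v_k$, the latter from Lemma \ref{thmingenk}) then upgrades this to $\|w_k\|_{L^\infty(B_{3/2})}\to0$, contradicting $\varepsilon_0$.

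Your Step 2 is precisely where the argument would stall as written. You acknowledge the ``main obstacle'' is controlling the oscillation term $(a_k-\bar a)$ against the nonlinear Gagliardo weight, but your proposed resolution---boundedness of the kernel plus $L^1$ smallness---does not suffice: with only $W^{s,p}$ control on $u_k$, H\"older gives no gain from the $L^1$ oscillation, since the bounded coefficient sits in $L^\infty$ and the remaining factor is merely $L^1$. What is missing is exactly the higher integrability from Theorem \ref{Main theorem1}. A secondary issue: you invoke Lemma \ref{thmingenk} for the uniform H\"older bound on $u_k$, but that lemma requires \emph{translation--invariant} coefficients and $f\equiv0$, neither of which holds for $u_k$; the paper cites an external low--order H\"older estimate for $u_k$ (from \cite{JDSPDPP}) and reserves Lemma \ref{thmingenk} for $v_k$.
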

\begin{proof}
The existence of a weak solution $v$ to \eqref{eq aux comp} is given by Theorem \ref{thmexst}, below. To prove the claim, we proceed by the method of contradiction. Suppose there exist $\epsilon_{0}>0$ and sequences $\{a_{k}\}_{k\in\mathbb{N}}$, $\{b_{k}\}_{k\in\mathbb{N}}$, $\{f_{k}\}_{k\in\mathbb{N}}$ and $\{u_{k}\}_{k\in\mathbb{N}}$ such that 
\begin{equation}
\label{iteration pde}
    \mathcal{L}_{a_{k},b_{k}}u_{k}=f_k\quad\text{in }B_{4}
\end{equation}
with 
\begin{equation}
\label{asump for itera}
    \sup_{B_{4}}|u_{k}|\leq 1,\quad T_{ps}(u_{k};0,4)+T_{qt}(u_{k};0,4)\leq 1
\end{equation}
and 
\begin{equation*}
    \left(\Xint-_{B_{4}}|f_{k}|^{\gamma}dx\right)^{1/\gamma}+\Xint-_{B_{4}}\Xint-_{B_{4}}(|a_{k}(x,y)-(a_{k})_{4,0}|+|b_{k}(x,y)-(b_{k})_{4,0}|)dxdy\leq \frac{1}{k},
\end{equation*}
but for any weak solution $v_{k}$ to 
\begin{align}
\label{limiting pde iteration}
   \left\{
\begin{alignedat}{3}
\mathcal{L}_{\Tilde{a}_{k},\Tilde{b}_{k}}v_{k} &=0 &&\qquad \mbox{in  $B_{2}$}, \\
v_k&=u_{k}&&\qquad  \mbox{in  $\mathbb{R}^N\setminus B_{2}$},
\end{alignedat} \right. 
\end{align}
there holds
\begin{equation}
\label{contr 1}
    \|u_{k}-v_{k}\|_{L^{\infty}(B_{1})}>\epsilon_{0}.
\end{equation}
Set $w_{k}:=u_{k}-v_{k}$. Then, from Lemma \ref{boundedness1} and Lemma \ref{boundedness2}, we see that $v_{k}\in L^{\infty}(B_{2})$ and hence $w_{k}\in L^{\infty}(B_{4})$. On account of \cite[Lemma 5.1]{JDSPDPP}, we check that $w_{k}$ is a well defined test function to the weak formulation of problems \eqref{iteration pde} and \eqref{limiting pde iteration}.
We next claim that 
\begin{align}\label{limit of wk}
		 \int_{B_{3/2}} |w_k(x)|^{p^*_s}dx\to 0 \quad\mbox{as }k\to\infty.
	\end{align} 
Testing $w_{k}$ to \eqref{iteration pde} and \eqref{limiting pde iteration}, we see that
\begin{equation*}
\begin{aligned}
I_{0}&:=\int_{\mathbb R^N}\int_{\mathbb R^N} \Tilde{a}_k(x,y) ([u_k(x)-u_k(y)]^{p-1}-[v_{k}(x)-v_{k}(y)]^{p-1})(w_k(x)-w_k(y))d\mu_1\\   
	 &\quad + \int_{\mathbb R^N}\int_{\mathbb R^N} \Tilde{b}_k(x,y) \Big([u_k(x)-u_k(y)]^{q-1}- [v_{k}(x)-v_{k}(y)]^{q-1}\Big)(w_k(x)-w_k(y))d\mu_2\\
	 &= \int_{\mathbb R^N}\int_{\mathbb R^N} (\Tilde{a}_{k}(x,y)-a_k(x,y)) [u_{k}(x)-u_{k}(y)]^{p-1}(w_k(x)-w_k(y))d\mu_1\\
	 &\quad + \int_{\mathbb R^N}\int_{\mathbb R^N} (\Tilde{b}_{k}(x,y)-b_k(x,y)) [u_{k}(x)-u_{k}(y)]^{q-1}(w_k(x)-w_k(y))d\mu_2
	 \\
	 &\quad+\int_{B_{4}}f_kw_{k}dx
	 =:I_{1}+I_{2}.
\end{aligned}
\end{equation*}
Now we estimate each $I_{i}$ for $i=0,1,2$ and $3$.\\
\textbf{Estimate of $I_{0}$}. Using \eqref{eqmon}, we see that
\begin{equation*}
    I_{0}\geq \frac{1}{\Lambda}[w_{k}]_{W^{s,p}(\mathbb{R}^{N})}^{p}.
\end{equation*}
\textbf{Estimate of $I_{2}$}. We first note that there exists a constant $c=c(q)$ such that
\begin{equation*}
\begin{aligned}
I_{2}&=\int_{B_{4}}\int_{B_{4}}(\Tilde{b}_{k}(x,y)-b_k(x,y)) [u_{k}(x)-u_{k}(y)]^{q-1}(w_k(x)-w_k(y))d\mu_2\\
&\leq c\int_{B_{4}}\int_{B_{4}}|\Tilde{b}_{k}(x,y)-b_k(x,y)| |u_{k}(x)-u_{k}(y)|^{p-1}|w_k(x)-w_k(y)|d\mu_1,
\end{aligned}
\end{equation*}
where we have used the fact that $tq\leq ps$ and \eqref{asump for itera}.
In addition, using H\"older's inequality, \eqref{eqbdB}, Theorem \ref{Main theorem1} and Young's inequality, we find that there is a constant $c=c(\mathsf{data})$ which is independent of $k$ such that
\begin{equation*}
\begin{aligned}
I_{2} &\leq c \left(\Xint-_{B_{4}}\Xint-_{B_{4}}|\Tilde{b}_{k}(x,y)-b_k(x,y)||u_{k}(x)-u_{k}(y)|^{p}d\mu_{1}\right)^{(p-1)/p}[w_{k}]_{W^{s,p}(B_{4})}\\
&\leq c \left(\Xint-_{B_{4}}\Xint-_{B_{4}}|\Tilde{b}_{k}(x,y)-b_k(x,y)|dxdy\right)^{\frac{\sigma(p-1)}{p(1+\sigma)}}\left(\Xint-_{B_{4}}\Xint-_{B_{4}}\left(\frac{|u_k(x)-u_k(y)|^{p}}{|x-y|^{N+ps}}\right)^{(1+\sigma)}dxdy\right)^{\frac{(p-1)}{p(1+\sigma)}}\\
&\quad\times[w_{k}]_{W^{s,p}(B_{4})}\\
&\leq c\left(\frac{1}{k}\right)^{\frac{\sigma}{1+\sigma}}+\frac{I_{0}}{16},
\end{aligned}
\end{equation*}
where we have chosen a sufficiently small $\sigma>0$ so that Theorem \ref{Main theorem1} holds.
Likewise, we have
\begin{equation*}
    I_{1}\leq c\left(\frac{1}{k}\right)^{\frac{\sigma}{1+\sigma}}+\frac{I_{0}}{16}.
\end{equation*}
\textbf{Estimate of $I_{3}$.} We use H\"older's inequality, Young's inequality and the Sobolev-Poincar\'e inequality to discover that
\begin{equation*}
\begin{aligned}
    I_{3}\leq \|f_k\|_{L^{\gamma}(B_{4})}\|w_{k}\|_{L^{\gamma'}(B_{4})}
    \leq c\|f_k\|_{L^{\gamma}(B_{4})}^{p'}+\frac{I_{0}}{16}.
\end{aligned}    
\end{equation*}
Combining all estimates $I_{0},I_{1}$, and $I_{2}$, we have
\begin{equation}
\label{limit of wk in fs}
    [w_{k}]_{W^{s,p}(\mathbb{R}^{N})}^{p}\leq c\left(\frac{1}{k}\right)^{\frac{\sigma}{1+\sigma}},
\end{equation}
where $c$ is independent of $k$. Therefore the claim \eqref{limit of wk} follows by the Sobolev-Poincar\'e inequality and \eqref{limit of wk in fs}.
Moreover $u_{k}$ and $v_{k}$ are H\"older continuous in $B_{2}$ with uniform bound independent of $k$ as in \cite[Lemma 5.1]{JDSPDPP}. 
By the Arzela-Ascoli theorem, there is a function $w$ such that $w_{k}\to w$ in $C^{\beta}(B_{3/2})$,  up to a subsequence, for some $\beta\in(0,1)$. By the uniqueness of the limit together with \eqref{limit of wk}, we have that
\begin{equation*}
    \lim_{k\to\infty}\|u_{k}-v_{k}\|_{L^{\infty}(B_{3/2})}=0,
\end{equation*}
which is a contradiction to \eqref{contr 1}.
\qed
\end{proof}
\begin{Lemma}
\label{normalized sol ho}
Let $u$ be a  weak solution to \eqref{probA} in $B_{4}\equiv B_{4}(0)$ with 
\begin{equation}
\label{base assum}
    \sup_{B_{4}}|u|\leq 1\quad\text{and}\quad T_{ps}(u;0,4)+T_{qt}(u;0,4)\leq 1.
\end{equation}
Given $\alpha\in (0, \Theta)$, where $\Theta$ is given by \eqref{eqTheta}, there exists a small constant  $\delta=\delta(\mathsf{data},\alpha)>0$ such that if kernel coefficients $a$ and $b$ are $(\delta,4)$-vanishing in $B_{4}\times B_{4}$ and
\begin{equation*}
    \left(\Xint-_{B_{4}}|f|^{\gamma}dx\right)^{1/\gamma}\leq\delta,
\end{equation*} 
then $u\in C^{\alpha}(B_{1})$ with the estimate 
\begin{equation*}
    [u]_{C^{\alpha}(B_{1})}\leq c
\end{equation*}for some constant $c=c(\mathsf{data},\alpha)$.
\end{Lemma}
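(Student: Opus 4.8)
The statement is the basic building block for the $C^{\alpha}$ theory, and I would prove it by a Campanato-type iteration that feeds the approximation Lemma \ref{approximation argument} into the frozen-coefficient H\"older estimate of Lemma \ref{thmingenk}. Fix $\alpha\in(0,\Theta)$ and pick an auxiliary exponent $\bar\alpha$ with $\alpha<\bar\alpha<\Theta_{0}=\min\{ps/(p-1),1\}$ (possible since $\Theta\le\Theta_{0}$). The goal is to produce a ratio $\tau=\tau(\mathsf{data},\alpha)\in(0,1/16)$, a constant $A=A(\mathsf{data},\alpha)\ge1$, radii $r_{j}=\tau^{j}$ and reals $\xi_{j}$ with $\xi_{0}=0$ such that, for all $j\ge0$,
\begin{equation*}
\sup_{B_{r_{j}}}|u-\xi_{j}|\le r_{j}^{\alpha},\qquad T_{ps}(u-\xi_{j};0,r_{j})+T_{qt}(u-\xi_{j};0,r_{j})\le A\,r_{j}^{\alpha},\qquad|\xi_{j+1}-\xi_{j}|\le A\,r_{j}^{\alpha}.
\end{equation*}
The base case $j=0$ is \eqref{base assum} together with an elementary annular splitting of the tail at radius $1$. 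Granting the induction, $(\xi_{j})$ is Cauchy because $\sum_{j}r_{j}^{\alpha}<\infty$; its limit is the value at $0$ of the continuous representative of $u$, and a standard dyadic interpolation gives $\sup_{B_{\rho}}|u-u(0)|\le c\,\rho^{\alpha}$ for all $\rho\le1$. Since $B_{3}(x_{0})\subset B_{4}$ for every $x_{0}\in B_{1}$ and all hypotheses (boundedness, tails, the $(\delta,4)$-vanishing condition, the smallness of $f$) transfer to $B_{3}(x_{0})$ with the same $\delta$, the argument applies centered at any $x_{0}\in B_{1}$ and yields $[u]_{C^{\alpha}(B_{1})}\le c(\mathsf{data},\alpha)$.

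For the inductive step, assuming the bounds at stage $j$, set $R_{j}:=r_{j}/4$ and $w_{j}(x):=\dfrac{u(R_{j}x)-\xi_{j}}{A\,r_{j}^{\alpha}}$. A direct scaling computation shows that $w_{j}$ solves $\mathcal{L}_{\tilde a_{j},\tilde b_{j}}w_{j}=\tilde f_{j}$ in $B_{4}$, where $\tilde a_{j}(x,y)=a(R_{j}x,R_{j}y)$ is still $(\delta,4)$-vanishing in $B_{4}\times B_{4}$, the coefficient $\tilde b_{j}(x,y)=(A\,r_{j}^{\alpha})^{q-p}R_{j}^{\,ps-qt}b(R_{j}x,R_{j}y)$ stays nonnegative and bounded (by a fixed multiple of $\Lambda$, using $q\ge p$, $qt\le ps$, $\alpha>0$ and $r_{j},R_{j}\le1$), and $\tilde f_{j}(x)=(A\,r_{j}^{\alpha})^{1-p}R_{j}^{\,ps}f(R_{j}x)$ satisfies $\|\tilde f_{j}\|_{L^{\gamma}(B_{4})}\le c\,r_{j}^{\,ps-N/\gamma-\alpha(p-1)}\|f\|_{L^{\gamma}(B_{r_{j}})}\le\delta$, the last step using precisely $\alpha\le(ps-N/\gamma)/(p-1)$. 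By the induction hypothesis $\sup_{B_{4}}|w_{j}|\le1$ and $T_{ps}(w_{j};0,4)+T_{qt}(w_{j};0,4)\le1$, so Lemma \ref{approximation argument}, applied with a parameter $\epsilon$ still to be chosen, produces a weak solution $v_{j}$ of $\mathcal{L}_{(\tilde a_{j})_{4,0},(\tilde b_{j})_{4,0}}v_{j}=0$ in $B_{2}$ with $v_{j}=w_{j}$ on $\mathbb{R}^{N}\setminus B_{2}$ and $\|w_{j}-v_{j}\|_{L^{\infty}(B_{1})}\le\epsilon$; after all the normalizations above the resulting $\delta$ is the one claimed in the lemma. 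As $(\tilde a_{j})_{4,0}$ and $(\tilde b_{j})_{4,0}$ are constant, hence locally translation invariant, Lemma \ref{thmingenk} applies to $v_{j}$; since $v_{j}$ is $\epsilon$-close to the normalized $w_{j}$, its sup and its tails over $B_{1/2}$ are controlled by absolute constants, so the estimate of Lemma \ref{thmingenk} gives $[v_{j}]_{C^{\bar\alpha}(B_{1/4})}\le c_{1}$ with $c_{1}=c_{1}(\mathsf{data},\alpha)$ and, in particular, $\sup_{B_{\rho}}|v_{j}-v_{j}(0)|\le c_{1}\rho^{\bar\alpha}$ for all $\rho\le1/4$.

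Combining this with $\|w_{j}-v_{j}\|_{L^{\infty}(B_{1})}\le\epsilon$ gives $\sup_{B_{\rho}}|w_{j}-v_{j}(0)|\le\epsilon+c_{1}\rho^{\bar\alpha}$ for $\rho\le1/4$. Taking $\rho=4\tau$, scaling back and setting $\xi_{j+1}:=\xi_{j}+A\,r_{j}^{\alpha}v_{j}(0)$, this becomes
\begin{equation*}
\sup_{B_{r_{j+1}}}|u-\xi_{j+1}|\le A\,r_{j}^{\alpha}\big(\epsilon+c_{1}(4\tau)^{\bar\alpha}\big)=r_{j+1}^{\alpha}\cdot A\,\tau^{-\alpha}\big(\epsilon+c_{1}4^{\bar\alpha}\tau^{\bar\alpha}\big),
\end{equation*}
so choosing first $\tau$ so small that $A\,c_{1}4^{\bar\alpha}\tau^{\bar\alpha-\alpha}\le\tfrac12$ and then $\epsilon\le\tfrac{1}{2A}\tau^{\alpha}$ recovers the sup-bound at stage $j+1$; moreover $|\xi_{j+1}-\xi_{j}|\le A\,r_{j}^{\alpha}\|v_{j}\|_{L^{\infty}(B_{1/2})}\le A\,r_{j}^{\alpha}$ once $A$ has absorbed the absolute bound on $\|v_{j}\|_{\infty}$. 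The tail bounds at stage $j+1$ are then obtained by splitting $\mathbb{R}^{N}\setminus B_{r_{j+1}}$ into the dyadic annuli $B_{r_{k}}\setminus B_{r_{k+1}}$, $0\le k\le j$, and the far region $\mathbb{R}^{N}\setminus B_{1}$: on each annulus one uses $|u-\xi_{j+1}|\le|u-\xi_{k}|+\sum_{\ell\ge k}|\xi_{\ell+1}-\xi_{\ell}|\le c\,r_{k}^{\alpha}$, on the far region one uses $T_{ps}(u;0,4)\le1$ and $|\xi_{j+1}|\le c$, and the resulting geometric series converges precisely because $\alpha(p-1)<ps$ and $\alpha(q-1)<qt$ (both forced by $\alpha<\Theta$), delivering $T_{ps}(u-\xi_{j+1};0,r_{j+1})+T_{qt}(u-\xi_{j+1};0,r_{j+1})\le A\,r_{j+1}^{\alpha}$ provided $A$ was fixed large enough at the start.

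I expect the main obstacle to be exactly this tail bookkeeping under rescaling: one must check that at every stage the rescaled solution still satisfies the normalization hypotheses $\sup_{B_{4}}|w_{j}|\le1$, $T_{ps}(w_{j};0,4)+T_{qt}(w_{j};0,4)\le1$ and $\|\tilde f_{j}\|_{L^{\gamma}(B_{4})}\le\delta$ demanded by Lemmas \ref{approximation argument} and \ref{thmingenk}, and that the rescaled coefficient $\tilde b_{j}$ does not blow up — it is here that the three restrictions $\alpha(p-1)\le ps-N/\gamma$, $\alpha(q-1)<qt$ and $\alpha<1$ packaged in $\Theta$ appear. A closely related point is the order in which the parameters must be frozen ($\bar\alpha$, then the absolute constants $A$ and $c_{1}$, then $\tau$, then $\epsilon$, and only then $\delta$), which has to be arranged so that no circular dependence between the tail constants and the rescaled ellipticity bound is created.
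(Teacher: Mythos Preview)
Your overall scheme is the paper's: a Campanato-type iteration that at each stage rescales, applies Lemma \ref{approximation argument} to produce a comparison function $v_j$ solving a frozen-coefficient problem, invokes Lemma \ref{thmingenk} for $v_j$, and updates the center by $\xi_{j+1}=\xi_j+A\,r_j^{\alpha}v_j(0)$. That part is correct and matches the paper exactly.

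The gap is in your tail bookkeeping. You propose to control $T_{ps}(u-\xi_{j+1};0,r_{j+1})$ by splitting $\mathbb{R}^N\setminus B_{r_{j+1}}$ into all the annuli $B_{r_k}\setminus B_{r_{k+1}}$ and using only the crude bound $|u-\xi_{j+1}|\le c\,r_k^{\alpha}$ on each. A direct computation shows that the innermost annulus $B_{r_j}\setminus B_{r_{j+1}}$ alone already contributes at the level $c\,r_j^{\alpha}=c\,\tau^{-\alpha}r_{j+1}^{\alpha}$ to the tail, so the induction forces $A\ge c\,\tau^{-\alpha}$; in particular $A$ cannot be fixed before $\tau$ as you assert. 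Feeding $A\sim\tau^{-\alpha}$ back into your oscillation step turns the requirement $A\,c_1\tau^{\bar\alpha-\alpha}\le\tfrac12$ into $\tau^{\bar\alpha-2\alpha}\le c$, which fails for small $\tau$ whenever $\alpha\ge\Theta_0/2$. The circular dependence you flag in your last paragraph is therefore real, and your stated parameter order does not resolve it.

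The paper closes this loop by normalizing so that the rescaled tail stays $\le 1$ at every stage (no auxiliary constant $A$) and, decisively, by estimating the innermost annulus with the sharper bound $|w_j(x)-v_j(0)|\le\epsilon+c_1|x|^{\bar\alpha}$, which comes from combining $\|w_j-v_j\|_{L^\infty(B_1)}\le\epsilon$ with the $C^{\bar\alpha}$ estimate for $v_j$ furnished by Lemma \ref{thmingenk}. That yields an innermost-annulus contribution of order $\epsilon\,\tau^{-\alpha}+c_1\,\tau^{\bar\alpha-\alpha}$, both small once $\epsilon$ and $\tau$ are chosen, so the tail bound returns to $\le 1$ and no constant $A$ is needed. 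Your argument becomes correct as soon as you replace the crude sup bound on the innermost annulus by this refined estimate; you can then drop $A$ entirely and match the paper's normalization.
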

\begin{proof}
Let $\alpha\in(0,\Theta)$ be fixed. We now show that for any $x\in B_{1}$, there is a constant $A^{x}\in \mathbb{R}$ such that 
\begin{equation*}
    \sup_{y\in B_{r}(x)}|u(y)-A^{x}|\leq cr^{\alpha},
\end{equation*}
for any $r\in(0,1]$ and for some constant  $c=c(\mathsf{data},\alpha)$. Using translation argument as in \cite[Proposition 4.2]{NHH}, it suffices to prove the case for $x=0$. To this end, we show the following claim.\\
\textbf{Claim.} There exist $\rho=\rho(\mathsf{data},\alpha)\in(0,1/4)$ and a sequence $\{A_{k}\}_{k=-1}^{\infty}$ with $A_{-1}=0$ such that for all $k\geq 0$,
\begin{equation}
\label{ind 1}
    |A_{k}-A_{k-1}|\leq 2\rho^{(k-1)\alpha},\quad \sup_{B_{4}}|u(\rho^{k}x)-A_{k}|\leq\rho^{k\alpha}
\end{equation}
and
\begin{equation}
\label{ind 2}
    T_{ps}\left(\left(\frac{u(\rho^{k}x)-A_{k}}{\rho^{k\alpha}}\right);0,4\right)+T_{qt}\left(\left(\frac{u(\rho^{k}x)-A_{k}}{\rho^{k\alpha}}\right);0,4\right)\leq 1.
\end{equation}
To prove the claim, we take $\rho>0$ sufficiently small depending only on $\mathsf{data}$ and $\alpha>0$ such that
\begin{equation}
\label{rho condi}
    \rho^{\frac{\Theta-\alpha}{2}}\leq \frac{1}{12^{\Theta_{0}+2}c_{1}c_{2}}\min\left\{\left[sp-\left(\frac{\Theta_{0}+\alpha}{2}\right)\right]^{\frac{1}{p-1}},1\right\},
\end{equation}
where $c_{1}=c_{1}(\mathsf{data})\geq1$ and $c_{2}=c_{2}(\mathsf{data})\geq1$ are constants which are determined later.
For $k=0$, we take $A_{0}=0$, then \eqref{ind 1} and \eqref{ind 2} hold by \eqref{base assum}. Suppose that \eqref{ind 1} and \eqref{ind 2} hold for $k=0,1,\ldots i$. Set 
\begin{equation*}
    u_{i}(x)=\frac{u(\rho^{i}x)-A_{i}}{\rho^{\alpha i}},\quad f_{i}(x)=\rho^{\left(sp-\alpha(p-1)\right)i}f(\rho^{i}x),\quad x\in\mathbb{R}^{N};
\end{equation*} 
\begin{equation*}
    a_{i}(x,y)=a(\rho^{i}x,\rho^{i}y)\quad\text{and}\quad b_{i}(x,y)=b(\rho^{i}x,\rho^{i}y)\rho^{(sp-tq+\alpha(q-p))i},\quad (x,y)\in\mathbb{R}^{2N}.
\end{equation*}
Then $u_{i}$ is a  weak solution to
\begin{equation*}
    \mathcal{L}_{a_{i},b_{i}}u_{i}=f_{i},\quad\text{in }B_{4}.
\end{equation*}
By the inductive assumption, we have
\begin{equation*}
    \sup_{B_{4}}|u_{i}|\leq 1\quad\mbox{and}\quad T_{ps}(u_{i};0,4)+T_{qt}(u_{i};0,4)\leq 1.
\end{equation*}
Since $\rho<1$, we notice that
\begin{equation*}
    \Lambda^{-1}\leq a_{i}\leq \Lambda\quad\text{and}\quad 0\leq b_{i}\leq \Lambda.
\end{equation*}
By Lemma \ref{approximation argument}, we find $\delta_{0}=\delta_{0}(\mathsf{data},\epsilon)$ corresponding to the given 
\begin{equation}
\label{epsi}
    \epsilon=\frac{\rho^{\alpha}}{16c_{2}}.
\end{equation}
Taking $\delta=\frac{\delta_{0}}{3}$,  we see that $a_{i}$ and $b_{i}$ are $(\delta,4)$-vanishing in $B_{4}\times B_{4}$ because $a$ and $b$ are $(\delta,4)$-vanishing in $B_{4}\times B_{4}$. Therefore, we check that
\begin{equation*}
    \left(\Xint-_{B_{4}}|f_{i}|^{\gamma}dx\right)^{1/\gamma}+
    \Xint-_{B_{4}}\Xint-_{B_{4}}(|a_{i}(x,y)-(a_{i})_{4,0}|+|b_{i}(x,y)-(b_{i})_{4,0}|)dxdy\leq \delta_{0}.
\end{equation*}
By Lemma \ref{approximation argument}, there exists a weak solution $v_i$ to the following problem:
\begin{align*}
   \left\{
\begin{alignedat}{3}
\mathcal{L}_{\Tilde{a}_{i},\Tilde{b}_{i}}v_i &=0 &&\qquad \mbox{in  $B_{2}$}, \\
v_i&=u_{i}&&\qquad  \mbox{in  $\mathbb{R}^N\setminus B_{2}$},
\end{alignedat} \right. 
\end{align*}
such that 
\begin{equation}
\label{com ep}
    \|u_{i}-v_i\|_{L^{\infty}(B_{1})}\leq \epsilon,
\end{equation}
where $\Tilde{a}_{i}$ and $\Tilde{b}_{i}$ are defined as in \eqref{tilde opr} corresponding to $a_i$ and $b_i$, respectively. Before checking the assumptions \eqref{ind 1} and \eqref{ind 2}, we specify the constants $c_{1}$ and $c_{2}$.\\
\textbf{1. Constant $c_{1}$}. We first note that there is a $c=c(\mathsf{data})$ independent of $i$ such that
\begin{equation}
\label{lp of v}
    \|v_i\|_{L^{p_{s}^{*}}(B_{2})}\leq c,
\end{equation}
by following the proof in Lemma \ref{approximation argument} with \eqref{base assum}. From \eqref{lp of v} and \eqref{com ep}, we see that
\begin{equation}
\label{tps for i}
    T_{ps}(v_i;0,2\rho)\leq c\left(\|v_i\|_{L^{\infty}(B_{3/2})}+\|v_i\|_{L^{p}(B_{2})}T_{ps}(u;0,2)\right)\leq c,
\end{equation}
where $c=c(\mathsf{data})$. In light of Lemma \ref{thmingenk} and \eqref{tps for i}, there exists a constant $c_{1}=c_{1}(\mathsf{data})\geq1$ which is independent of $i$ such that
\begin{equation}
\label{seminorm Ho}
    [v_i]_{C^{\Tilde{\alpha}}(B_{1})}\leq c_{1},
\end{equation}
where $\Tilde{\alpha}=(\Theta_{0}+\alpha)/2<1$.

\noindent
\textbf{2. Constant $c_{2}$.}
Set 
\begin{equation}
\label{constant}
    c_{2}=\max\left\{1,T_{ps}(1;x_{0},R)+T_{qt}(1;x_{0},R)\right\},\quad \text{for } R>0\quad\text{and}\quad x_{0}\in\mathbb{R}^{N}.
\end{equation}
Then we find that $c_{2}=c_{2}(\mathsf{data})\geq1$ and it is independent of $R$ and $x_{0}$.\\
Let $A_{i+1}=A_{i}+\rho^{i\alpha}v_i(0)$. We now check the inductive assumptions \eqref{ind 1} and \eqref{ind 2} for $i=k+1$.
We first note that \eqref{com ep} also implies that
\begin{equation}
\label{ind i+1}
    |A_{i+1}-A_{i}|\leq \rho^{i\alpha}|v_i(0)|\leq 2\rho^{i\alpha}.
\end{equation}
In addition,  by \eqref{rho condi}, \eqref{com ep} and \eqref{seminorm Ho}, we see that
\begin{equation*}
\begin{aligned}
\sup_{B_{4}}|u(\rho^{i+1}x)-A_{i+1}|&=\sup_{B_{4\rho}}|u(\rho^{i}x)-A_{i}-\rho^{i\alpha}v_{i}(0)|\\
&\leq \rho^{i\alpha}\sup_{B_{4\rho}}|u_{i}(x)-v_{i}(x)|+\rho^{i\alpha}\sup_{B_{4\rho}}|v_{i}(x)-v_{i}(0)| \\
&\leq \frac{\rho^{(i+1)\alpha}}{16}+c_{1}(4\rho)^{\Tilde{\alpha}}\rho^{i\alpha} \leq \rho^{(i+1)\alpha},
\end{aligned}
\end{equation*}
where we have used $\rho\in\left(0,{1/4}\right)$.
Thus, we have shown that \eqref{ind 1} holds for $k=i+1$. Moreover,
we observe  that 
\begin{align}\label{estimate Jsp}
J_{s,p}:=&\left((4\rho^{i+1})^{sp}\int_{B_{\rho^{i}}\setminus B_{4\rho^{i+1}}}\frac{\left|u(x)-A_{i+1}\right|^{p-1}}{\rho^{(i+1)\alpha(p-1)}|x|^{N+sp}}dx\right)^{\frac{1}{p-1}}\nonumber\\
&\leq \left((4\rho)^{sp}\int_{B_{1}\setminus B_{4\rho}}\frac{\left|u(\rho^{i}x)-\left(A_{i}+v_i(x)\rho^{i\alpha}\right)\right|^{p-1}}{\rho^{(i+1)\alpha(p-1)}|x|^{N+sp}}dx\right)^{\frac{1}{p-1}}\nonumber\\
&\quad+\left((4\rho)^{sp}\int_{B_{1}\setminus B_{4\rho}}\frac{\left|v_i( x)-v_i(0)\right|^{p-1}}{\rho^{\alpha(p-1)}|x|^{N+sp}}dx\right)^{\frac{1}{p-1}} \nonumber\\
&\leq c_{2}\frac{\|u_{i}-v_{i}\|_{L^{\infty}(B_{1})}}{\rho^{\alpha}}+c_{1}\left((4\rho)^{sp}\int_{B_{1}\setminus B_{4\rho}}\frac{dx}{\rho^{\alpha(p-1)}|x|^{N+sp-\Tilde{\alpha}(p-1)}}\right)^{\frac{1}{p-1}} \nonumber\\
&\leq c_{2}\frac{\epsilon}{\rho^{\alpha}}+\frac{c_{2}c_{1}4^{\tilde\alpha}}{\left(sp-\Tilde{\alpha}(p-1)\right)^{\frac{1}{p-1}}}
\rho^{\frac{\Theta-\alpha}{2}}\leq \frac{1}{8},
\end{align}
where we have used \eqref{com ep}, \eqref{seminorm Ho}, \eqref{epsi} and \eqref{rho condi}. Similarly, we deduce that
\begin{equation}
\label{estimate Jtq}
    J_{t,q}\leq\frac{1}{8}.
\end{equation}
Consequently, using \eqref{estimate Jsp}, \eqref{estimate Jtq} and \eqref{constant}, we obtain
\begin{equation*}
\begin{aligned}
\sum_{l}T_{l}\left(\left(\frac{u(\rho^{i+1}x)-A_{i+1}}{\rho^{\alpha(i+1)}}\right);0,4\right)
&=\sum_{l}T_{l}\left(\left(\frac{u(x)-A_{i+1}}{\rho^{\alpha(i+1)}}\right);0,4\rho^{i+1}\right)\\
&\leq \sum_{l}(4\rho)^{\Theta_{0}}T_{l}\left(\left(\frac{u(x)-A_{i+1}}{\rho^{\alpha(i+1)}}\right);0,\rho^{i}\right)+J_{s,p}+J_{t,q}\\
&\leq \sum_{l}(4\rho)^{\Theta_{0}}T_{l}\left(\left(\frac{u(x)-A_{i+1}}{\rho^{\alpha(i+1)}}\right);0,\rho^{i}\right)+\frac{1}{4}
\end{aligned}
\end{equation*}
for $l\in\{ps,qt\}$.
With the help of \eqref{constant}, \eqref{ind i+1}, \eqref{ind 1} and \eqref{ind 2} for $k=i$, we further estimate
\begin{equation*}
\begin{aligned}
    &\sum_{l}(4\rho)^{\Theta_{0}}T_{l}\left(\left(\frac{u(x)-A_{i+1}}{\rho^{\alpha(i+1)}}\right);0,\rho^{i}\right) \\
    &\leq 4^{\Theta_{0}}\Bigg[ \sum_{l}\rho^{\Theta_{0}}T_{l}\left(\left(\frac{u(x)-A_{i}}{\rho^{\alpha(i+1)}}\right);0,4\rho^{i}\right)+\sum_{l}\rho^{\Theta_{0}}T_{l}\left(\frac{1}{\rho^{\alpha}};0,4\rho^{i}\right)\\
    &\quad\quad\quad+\sum_{l}\rho^{\Theta_{0}}T_{l}\left(\left(\frac{A_{i}-A_{i+1}}{\rho^{\alpha(i+1)}}\right);0,\rho^{i}\right)\Bigg]\\
    &\leq4^{\Theta_{0}}\left[ \sum_{l}\rho^{\Theta_{0}-\alpha}T_{l}\left(\left(\frac{u(x)-A_{i}}{\rho^{\alpha i}}\right);0,4\rho^{i}\right)+\rho^{\Theta_{0}-\alpha}3c_{2}\right]\\
    &\leq 4^{\Theta_{0}}\left[4c_{2}\rho^{\Theta_{0}-\alpha}\right]\leq \frac{1}{4}.
\end{aligned}    
\end{equation*}
It gives that \eqref{ind 2} holds for $k=i+1$, hence the \textbf{claim} follows. Thus, from the \textbf{claim} with simple computations (see \cite{caffcab}), we see that 
\begin{equation*}
    \lim_{i\to\infty}A_{i}=A<+\infty.
\end{equation*}
 In addition, for any $r\in(0,1]$, there is a constant $c=c(\mathsf{data},\alpha)$ such that
\begin{equation*}
\begin{aligned}
    \|u(x)-A\|_{L^{\infty}(B_{r})}=\|u(x)-A_{j}\|_{L^{\infty}(B_{r})}+|A-A_{j}|
    &\leq \rho^{j\alpha}+\sum_{k=j}^{\infty}2\rho^{k\alpha}\\
    &\leq c\rho^{j\alpha}\leq cr^{\alpha},
\end{aligned}    
\end{equation*}
where $j$ is the unique non-negative number satisfying $\rho^{j+1}<r\leq \rho^{j}$. 
\qed
\end{proof}


\begin{Lemma}
\label{almost final H}
Let $u$ be a local weak solution to \eqref{probA} and let the functions $a$ and $b$ be in VMO. Then for any $\alpha\in(0,\Theta)$, $u\in C^{\alpha}_{\mathrm{loc}}(\Omega)$. 
\end{Lemma}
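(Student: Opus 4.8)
The plan is to reduce, by a dilation and a normalisation, to the setting of Lemma \ref{normalized sol ho}, and then to patch the resulting local H\"older estimates by a finite covering. Fix $\widetilde\Omega\Subset\Omega$ and $\alpha\in(0,\Theta)$; it suffices to show that every $x_0\in\widetilde\Omega$ has a neighbourhood $B_\rho(x_0)$, with $\rho$ and the H\"older bound independent of $x_0$, on which $u\in C^\alpha$.

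First I would record that $u\in L^\infty_{\mathrm{loc}}(\Omega)$ by the boundedness results of the appendix (Lemmas \ref{boundedness1} and \ref{boundedness2}). Fix $R_0>0$ with $B_{R_0}(x)\Subset\Omega$ for every $x\in\widetilde\Omega$, and let $\Omega'\Subset\Omega$ be a fixed neighbourhood of $\widetilde\Omega$ containing all these balls. Using $u\in L^{p-1}_{ps}(\mathbb R^N)\cap L^{q-1}_{qt}(\mathbb R^N)$ together with $u\in L^\infty(\Omega')$, and splitting each tail integral into the part inside $B_{R_0}(x_0)$ (controlled by $\|u\|_{L^\infty(\Omega')}$) and the part outside (which carries the small factor $(4\rho)^{ps}$, resp.\ $(4\rho)^{qt}$), one obtains a constant $C_0$, depending only on $u$, $\widetilde\Omega$, $R_0$, such that $T_{ps}(u;x_0,4\rho)+T_{qt}(u;x_0,4\rho)\le C_0$ for all $x_0\in\widetilde\Omega$ and all $\rho\le R_0/8$. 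Set $K:=1+\|u\|_{L^\infty(\Omega')}+C_0\ (\ge1)$ and, for $\rho\le R_0/8$ still to be chosen, put $\widetilde u(x):=K^{-1}u(\rho x+x_0)$ on $B_4$, together with
\[
\widetilde a(x,y):=a(\rho x+x_0,\rho y+x_0),\quad \widetilde b(x,y):=K^{q-p}\rho^{\,sp-tq}b(\rho x+x_0,\rho y+x_0),\quad \widetilde f(x):=K^{1-p}\rho^{\,sp}f(\rho x+x_0),
\]
so that $\mathcal L_{\widetilde a,\widetilde b}\widetilde u=\widetilde f$ in $B_4$. A change of variables gives $\sup_{B_4}|\widetilde u|\le1$ and $T_{ps}(\widetilde u;0,4)+T_{qt}(\widetilde u;0,4)\le C_0/K\le1$, i.e.\ \eqref{base assum}; the kernel $\widetilde a$ obeys \eqref{eqbdA}; and, since $tq<sp$, one has $K^{q-p}\rho^{\,sp-tq}\le1$ once $\rho$ is small, so $0\le\widetilde b\le\Lambda$, i.e.\ \eqref{eqbdB}.

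Next I would shrink $\rho$ to meet the smallness hypotheses of Lemma \ref{normalized sol ho}. Since $a(\cdot,\cdot),b(\cdot,\cdot)$ are in VMO on $\Omega\times\Omega$, their moduli $\nu_a,\nu_b$ vanish at $0$; as an $r$-average in the rescaled variables is a $(\rho r)$-average in the original ones, and the factor multiplying $\widetilde b$ is $\le1$, the coefficients $\widetilde a,\widetilde b$ are $(\delta,4)$-vanishing in $B_4\times B_4$ as soon as $\nu_a(4\rho)+\nu_b(4\rho)\le\delta$. For the datum, a change of variables gives $\big(\Xint-_{B_4}|\widetilde f|^\gamma\,dx\big)^{1/\gamma}\le c\,\rho^{\,sp-N/\gamma}K^{1-p}\|f\|_{L^\gamma(B_{4\rho}(x_0))}$, and $sp-N/\gamma>0$ because $\gamma>\max\{1,N/(ps)\}$, so this is $\le\delta$ for $\rho$ small. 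Hence, given the $\delta=\delta(\mathsf{data},\alpha)$ furnished by Lemma \ref{normalized sol ho}, I can fix $\rho\in(0,R_0/8]$, depending only on $\mathsf{data}$, $\alpha$, $u$, $\widetilde\Omega$, $R_0$ and $\nu_a,\nu_b$, for which all hypotheses hold. Lemma \ref{normalized sol ho} then gives $\widetilde u\in C^\alpha(B_1)$ with $[\widetilde u]_{C^\alpha(B_1)}\le c(\mathsf{data},\alpha)$, whence, scaling back, $u\in C^\alpha(B_\rho(x_0))$ with $[u]_{C^\alpha(B_\rho(x_0))}\le c\,K\rho^{-\alpha}$. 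Since $\rho$ and this bound are uniform over $x_0\in\widetilde\Omega$, a finite covering of $\widetilde\Omega$ yields $u\in C^\alpha(\widetilde\Omega)$, and, $\widetilde\Omega$ being arbitrary, $u\in C^\alpha_{\mathrm{loc}}(\Omega)$.

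I expect the main difficulty to be, as throughout this section, the handling of the nonlocal tails under the rescaling: one has to arrange that a single normalisation constant $K$ simultaneously controls $\|u\|_{L^\infty}$ near $x_0$ and both tails $T_{ps},T_{qt}$ uniformly in the small scale $\rho$---which is why the $L^\infty_{\mathrm{loc}}$ bound has to be established first, to tame the local contribution of each tail---and that the dilated kernel $\widetilde b=K^{q-p}\rho^{\,sp-tq}b(\rho\cdot+x_0,\rho\cdot+x_0)$ stays below $\Lambda$; the strict inequality $tq<sp$ is precisely what allows the factor $\rho^{\,sp-tq}$ to absorb the constant $K^{q-p}$, while $\gamma>\max\{1,N/(ps)\}$ is precisely what makes the rescaled datum $\widetilde f$ small at fine scales.
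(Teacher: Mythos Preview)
Your proposal is correct and follows essentially the same scheme as the paper's proof: rescale and normalise so that the hypotheses of Lemma~\ref{normalized sol ho} are met, then cover. The only cosmetic differences are that the paper absorbs the $L^\gamma$-norm of $f$ into the normalisation constant (their $\mathcal{M}$) rather than using the extra factor $\rho^{sp-N/\gamma}$ to make $\widetilde f$ small, and that the paper fixes a single reference ball $B_{R_0}(x_0)$ and then shifts the centre $z$ inside it (controlling the resulting tail at $z$ via a geometric comparison $|y-z|\ge \tfrac{R}{2R_0}|y-x_0|$), whereas you bound the tails uniformly over all centres in $\widetilde\Omega$ from the outset; both devices achieve the same end. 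One minor remark: the reference to Lemma~\ref{boundedness2} is unnecessary here, since interior boundedness (Lemma~\ref{boundedness1}) already yields $u\in L^\infty_{\mathrm{loc}}(\Omega)$.
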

\begin{proof}
Let $\alpha\in(0,\Theta_{0})$ be fixed and let  $\delta=\delta(\mathsf{data},\alpha)$ be as obtained in  Lemma \ref{normalized sol ho}. Suppose $B_{\rho_{0}}(x_{0})\Subset\Omega$. It suffices to show that $u\in C^{\alpha}\left(\overline{B_{\rho_{0}}(x_{0})}\right)$. Set 
\begin{equation}
\label{distance of R}
    R:=\mathrm{dist}\left(B_{\rho_{0}}(x_{0}),\partial\Omega\right),\quad R_{0}:=\rho_{0}+R/2
\end{equation}
and
\begin{equation*}
\begin{aligned}
    \mathcal{M}:=&8c_{2}\Bigg[\|u\|_{L^{\infty}(B_{R_{0}}(x_{0}))}+T_{ps}(u;x_{0},R_{0})+T_{qt}(u;x_{0},R_{0})\\
    &\quad+\left(\frac{R_{0}^{sp-\frac{n}{\gamma}}\|f\|_{L^{\gamma}(B_{R_{0}}(x_{0}))}}{\delta}\right)^{\frac{1}{p-1}}+1\Bigg]\times \left(\frac{2R_{0}}{R}\right)^\frac{N+sp}{p-1},
\end{aligned}
\end{equation*}
where $c_{2}$ is given as in \eqref{constant} of Lemma \ref{normalized sol ho}.
Then we find that there is a constant
\begin{equation}
\label{condition of rho}
    \rho\in\left(0,\min\left\{\frac{\rho_{0}}{4},\frac{R}{4}\right\}\right)
\end{equation}
depending only on $\mathsf{data},\mathcal{M}, \nu_{a}$ and $\nu_{b}$ such that
\begin{equation*}
    \mathcal{M}^{q-p}\left(\frac{\rho}{4}\right)^{sp-tq}\leq 1
\end{equation*}
(this is possible because of the condition $ps>qt$) and the kernel coefficients $a$ and $b$ are $(\delta,\rho)$-vanishing in $B_{\rho_{0}}(x_{0})\times B_{\rho_{0}}(x_{0})$. We further note that 
\begin{equation*}
    B_{\rho}(z)\subset B_{R_{0}}(x_{0}),\quad \text{for every }z\in B_{\rho_{0}}(x_{0}).
\end{equation*}
We define for any $z\in B_{\rho_{0}}(x_{0})$, 
\begin{equation*}
    u_{z}(x)=\frac{u\left(\frac{\rho }{4}x+z\right)}{\mathcal{M}},\quad f_{z}(x)=\left(\frac{\rho}{4}\right)^{sp}\frac{1}{\mathcal{M}^{p-1}}f\left(\frac{\rho }{4}x+z\right),\quad x\in B_{4}
\end{equation*}
and
\begin{equation*}
    a_{z}(x,y)=a\left(\frac{\rho }{4}x+z,\frac{\rho }{4}y+z\right),\ b_{z}(x,y)=\mathcal{M}^{q-p}\left(\frac{\rho}{4}\right)^{sp-tq}b\left(\frac{\rho }{4}x+z,\frac{\rho }{4}y+z\right),\ (x,y)\in\mathbb{R}^{2N}.
\end{equation*}
Then we directly see that
\begin{equation*}
\mathcal{L}_{a_{z},b_{z}}u_{z}=f_{z},\quad \text{in }B_{4}(0)
\end{equation*}
with 
\begin{equation*}
    \sup_{B_{4}}|u_{z}|\leq 1
    \quad\text{and}\quad \left(\Xint-_{B_{4}}|f|^{\gamma}dx\right)^{1/\gamma}\leq\delta.
\end{equation*}
On the other hand, for $l\in\{ps,qt\}$, we note that
\begin{align*}
\sum_{l}T_{l}(u_{z};0,4)=\frac{1}{\mathcal{M}}\sum_{l}T_{l}(u;z,\rho)
&\leq \left(\frac{2R_{0}}{R}\right)^\frac{N+sp}{p-1} \frac{1}{\mathcal{M}}\sum_{l}T_{l}(u;x_0,R_{0}) \nonumber\\
& \quad +\frac{1}{\mathcal{M}}\sum_{l}T_{l}\Big(\Big(\frac{R}{2R_{0}}\Big)^\frac{N+sp}{p-1}\frac{\mathcal{M}}{8c_{2}};z,\rho\Big) \nonumber\\
&\leq \frac{1}{8c_2} +\frac{1}{4}\leq 1,
\end{align*}
where we have used \eqref{condition of rho}, \eqref{distance of R} and the fact that
\begin{equation*}
    |y-z|\geq|y-x_{0}|-|x_{0}-z|\geq |y-x_{0}|-\frac{\rho_{0}}{R_{0}}|y-x_{0}|\geq \frac{R}{2R_{0}}|y-x_{0}|,\quad y\in B_{R_{0}}(x_{0})^c.
\end{equation*} 
Moreover, the kernel coefficients $a_{z}$ and $b_{z}$ are $(\delta,4)$-vanishing in $B_{4}\times B_{4}$ and the following holds:
\begin{align*}
    \Lambda^{-1}\leq a_z\leq \Lambda \quad\mbox{and}\quad 0\leq b_{z}\leq\Lambda.
\end{align*} 
By Lemma \ref{normalized sol ho}, $u_{z}\in C^{\alpha}(\overline{B_{1}})$. Scaling it back, we obtain  $u\in C^{\alpha}(\overline{B_{\rho}(z)})$ for any $z\in \overline{B_{\rho_{0}}(x_{0})}$. Using the standard covering argument as in \cite[Theorem 4.3]{NHH}, we have the desired result.
\qed
\end{proof}

Now we return to our original problem; that is, the coefficient function $a$ has the form $a(x,y,u(x),u(y))$, where $u$ is a solution under consideration. 
\begin{Lemma}\label{lem VMO u}
For a weak solution $u\in C^{\sigma}_{\mathrm{loc}}(\Omega)$ to  \eqref{probM}, for some $\sigma\in(0,1)$, the coefficient function $a(x,y,u(x),u(y))$ is in VMO on $B_{\rho}(x_{0})\times B_{\rho}(y_{0})$ for any $x_0,y_0\in\mathbb{R}^N$ and $\rho>0$ satisfying $B_{\rho}(x_{0}),B_{\rho}(y_{0})\Subset\Omega$.
	\end{Lemma}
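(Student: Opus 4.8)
\emph{Plan.} The strategy is to verify that the composite coefficient $\bar a(x,y):=a(x,y,u(x),u(y))$, which no longer depends on any auxiliary variables, has a vanishing mean oscillation modulus on $B_\rho(x_0)\times B_\rho(y_0)$ in the sense of the last part of Definition \ref{defVMO}(2). The mechanism is a freezing argument: on a product ball $B_r(x_1)\times B_r(y_1)\subset B_\rho(x_0)\times B_\rho(y_0)$ one compares $\bar a$ with the coefficient whose last two slots are frozen at the center values $u(x_1),u(y_1)$, thereby splitting the oscillation of $\bar a$ into a ``continuity in the solution'' term, controlled by (A3) and the H\"older continuity of $u$, and a ``VMO in the space variables'' term, controlled by (A4).

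\emph{Main steps.} First fix a compact set $K\Subset\Omega$ with $\overline{B_\rho(x_0)}\cup\overline{B_\rho(y_0)}\subset K$ and put $M:=\|u\|_{L^\infty(K)}$, $L:=[u]_{C^\sigma(K)}$, both finite since $u\in C^\sigma_{\mathrm{loc}}(\Omega)$. Let $0<r\le\rho$ and let $x_1\in B_\rho(x_0)$, $y_1\in B_\rho(y_0)$ be such that $B_r(x_1)\subset B_\rho(x_0)$ and $B_r(y_1)\subset B_\rho(y_0)$, and abbreviate $\bar c:=(a)_{r,x_1,y_1}(u(x_1),u(y_1))$. Using $\Xint-_{B_r(x_1)}\Xint-_{B_r(y_1)}|\bar a-(\bar a)_{r,x_1,y_1}|\,dx\,dy\le 2\Xint-_{B_r(x_1)}\Xint-_{B_r(y_1)}|\bar a-\bar c|\,dx\,dy$ and the triangle inequality, the oscillation of $\bar a$ on the product ball is bounded by $2\,\mathrm{(I)}+2\,\mathrm{(II)}$ with
\[
\mathrm{(I)}=\Xint-_{B_r(x_1)}\Xint-_{B_r(y_1)}|a(x,y,u(x),u(y))-a(x,y,u(x_1),u(y_1))|\,dx\,dy,
\]
and $\mathrm{(II)}=\Xint-_{B_r(x_1)}\Xint-_{B_r(y_1)}|a(x,y,u(x_1),u(y_1))-\bar c|\,dx\,dy$. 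For $\mathrm{(I)}$: since $B_r(x_1),B_r(y_1)\subset K$ we have $|u(x)-u(x_1)|\le Lr^\sigma$ and $|u(y)-u(y_1)|\le Lr^\sigma$ on the product ball, so (A3) together with the monotonicity of $\omega_{a,M}$ gives $|a(x,y,u(x),u(y))-a(x,y,u(x_1),u(y_1))|\le\omega_{a,M}(Lr^\sigma)$ pointwise, hence $\mathrm{(I)}\le\omega_{a,M}(Lr^\sigma)$. For $\mathrm{(II)}$: the frozen values $u(x_1),u(y_1)$ lie in $[-M,M]$, so $\mathrm{(II)}$ is exactly one of the quantities over which the supremum in \eqref{vmo modul} is taken, whence $\mathrm{(II)}\le\nu_{a,M}(r)$. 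Combining, every such product-ball oscillation of $\bar a$ is at most $2\omega_{a,M}(Lr^\sigma)+2\nu_{a,M}(r)$, a bound independent of the centers that tends to $0$ as $r\downarrow 0$ by (A3)--(A4); taking the supremum over admissible centers and over $0<r\le\rho$ gives the conclusion.

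\emph{Main obstacle.} The only delicate point is aligning the two terms with the precise moduli: in $\mathrm{(II)}$ one must observe that freezing $u$ at the centers is legitimate precisely because the frozen values remain in the common range $[-M,M]$ on which $\nu_{a,M}$ governs the oscillation, while in $\mathrm{(I)}$ one must remember that (A3) supplies only a qualitative modulus $\omega_{a,M}$, so the argument must stay at the level of the composite modulus $r\mapsto\omega_{a,M}(Lr^\sigma)$ rather than extracting an explicit rate for $\bar a$. Measurability of $\bar a$ and the uniform bound $\|\bar a\|_{L^\infty}\le\Lambda$ follow at once from (A2) and the continuity of $u$, so nothing further is needed there.
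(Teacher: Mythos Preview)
Your proof is correct and follows essentially the same approach as the paper: both freeze the last two arguments of $a$ at the center values and split the mean oscillation into a continuity term controlled by (A3) and a VMO term controlled by (A4). Your version is slightly more detailed---you explicitly take arbitrary centers $x_1,y_1$ and quantify the continuity term via the H\"older seminorm $L=[u]_{C^\sigma(K)}$, whereas the paper fixes the centers at $x_0,y_0$ and appeals only to continuity of $u$---but the mechanism is identical.
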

\begin{proof}
Fix $x_{0},y_{0}\in \Omega$ and $\rho>0$ such that $B_{\rho}(x_{0}),B_{\rho}(y_{0})\Subset\Omega$. Then, for all $r<\rho$, using the continuity and VMO properties, we have
	\begin{align*}
	 &\Xint-_{B_{r}(x_{0})}\Xint-_{B_{r}(y_{0})}\bigg|a(x,y,u(x),u(y))-\Xint-_{B_{r}(x_{0})}\Xint-_{B_{r}(y_{0})}a(x',y',u(x'),u(y'))dx'dy'\bigg|dxdy \nonumber\\
	 &\leq 2\Xint-_{B_{r}(x_{0})}\Xint-_{B_{r}(y_{0})}|a(x,y,u(x),u(y))-a(x,y,u(x_0),u(y_0))|dxdy \nonumber\\
	 & \ + \Xint-_{B_{r}(x_{0})}\Xint-_{B_{r}(y_{0})} |a(x,y,u(x_0),u(y_0)) - (a)_{r,x_0,y_0}(u(x_0),u(y_0))|dxdy \nonumber\\
	 &\leq 2\Xint-_{B_{r}(x_{0})}\Xint-_{B_{r}(y_{0})} \omega_{a,M}\Big(\frac{|u(x)-u(x_0)|+|u(y)-u(y_0)|}{2}\Big) dxdy+\nu_{a,M}(\rho),
		\end{align*}
 where $M=2\max\{\|u\|_{L^\infty(B_\rho(x_0))}, \|u\|_{L^\infty(B_\rho(y_0))}\}$, $\omega_{a,M}$ is given by property (A3) and $\nu_{a,M}$ by \eqref{vmo modul}. The right-hand side terms converge to $0$ as $\rho\to 0$ due to the assumption (A3) and the VMO condition of the definition \ref{defVMO}. This proves the lemma.\QED
\end{proof}
	
\textbf{Proof of Theorem \ref{thmmain}}: 
Let $B_{\rho_{0}}(x_{0})\Subset\Omega$ and $\alpha\in(0,\Theta_{0})$ be fixed. It suffices to show that $u\in C^{\alpha}(\overline{B_{\rho_{0}}(x_{0})})$. Set 
\begin{equation*}
    R:=\mathrm{dist}\left(B_{\rho_{0}}(x_{0}),\partial\Omega\right)\quad\mbox{and}\quad R_{0}:=\rho_{0}+R/2.
\end{equation*}
In light of Lemma \ref{lem VMO u} and simple computations, we see that $A(x,y):=a(x,y,u(x),u(y))$ is in VMO on $B_{R_{0}}(x_0)\times B_{R_{0}}(x_0)$, symmetric and satisfies \eqref{eqbdA}. 
Since $u$ solves
\begin{equation*}
    \mathcal{L}_{A,b} \ u=f\quad\text{in }B_{R_{0}}(x_{0}),
\end{equation*}
 where $\Lambda^{-1}\leq A\leq \Lambda$, it gives that $u\in C^{\sigma}_{\mathrm{loc}}(\Omega)$ for some $\sigma=\sigma(\mathsf{data})\in(0,1)$.
 By Lemma \ref{almost final H} and Lemma \ref{lem VMO u}, we have the required one.
\QED

\section{The existence result}
This section provides the solvability of the following Dirichlet problem:		
	\begin{equation*}
		\left\{ \begin{array}{llr}
			\mathcal L_{a(\cdot,u),b} \; u=f &\mbox{in }\Omega, \\ 
			u=g &\mbox{in }\mathbb R^N\setminus\Omega,
		\end{array}
		\right. \tag{$\mathcal{G}$}\label{genPrb}
	\end{equation*}
 where $\Omega\subset\mathbb R^N$ is a bounded open set, $f$ and $g$ are suitable measurable functions. 
 With $g\in L^{p-1}_{ps}(\mathbb R^N)\cap L^{q-1}_{qt,b}(\mathbb R^N)$ and $\Omega\Subset\Omega'\Subset\mathbb R^N$, we define
	\begin{align*}
		X_{g,b}(\Omega,\Omega'):= \{ v\in \mathcal W_b(\Omega')\cap L^{p-1}_{ps}(\mathbb R^N)\cap L^{q-1}_{qt,b}(\mathbb R^N) : v=g \quad\mbox{a.e. in }\mathbb R^N\setminus\Omega \},
	\end{align*}
 equipped with the norm of $\mathcal W_b(\Omega')$. Once again, we will suppress the term $b$ from the above definition whenever it is clear in the context. Now we define the notion of a weak solution to \eqref{genPrb} as usual. 
 \begin{Definition}
 Let $f\in (\mathcal W(\Omega'))^*$ and $g\in \mathcal W(\Omega')\cap L^{p-1}_{ps}(\mathbb R^N)\cap L^{q-1}_{qt,b}(\mathbb R^N)$, for $\Omega\Subset\Omega'\Subset\mathbb R^N$.	A function $u\in X_g(\Omega,\Omega')$ is said to be a weak solution of the problem	\eqref{genPrb}, if for all $\phi\in X_0(\Omega,\Omega')$, 
	\begin{align*}
	  &\int_{\mathbb R^N}\int_{\mathbb R^N} \Bigg(a(x,y,u(x),u(y)) \frac{[u(x)-u(y)]^{p-1}}{|x-y|^{N+ps}}+b(x,y) \frac{[u(x)-u(y)]^{q-1}}{|x-y|^{N+qt}}\Bigg)(\phi(x)-\phi(y))dxdy \\
	   &= \langle f, \phi \rangle_{\mathcal W, \mathcal W^*}.
		\end{align*}
 \end{Definition}
To prove our existence result, we consider the case when the kernel coefficient $a(\cdot,\cdot,\cdot,\cdot)$ satisfies a global uniform continuity condition (stronger than (A3)), namely 
 \begin{itemize}
  \item[(A3)'] the function $a$ is uniformly continuous in $\mathbb{R}^N\times\mathbb{R}^N\times\mathbb{R}\times\mathbb{R}$; that is,  there is a non-decreasing function $\omega_{a}:[0,\infty)\rightarrow[0,\infty)$ with $\omega_{a}(0)=0$ and $\lim\limits_{t\downarrow 0}\omega_{a}(t)=0$ such that
		\begin{align}\label{eq140}
			|a(x,y,w,z)-a(x,y,w',z')|\leq\omega_{a}\Big(\frac{|w-w'|+|z-z'|}{2}\Big)
		\end{align}
		for all $z,z',w,w'\in\mathbb{R}$ uniformly in $(x,y)\in\mathbb{R}^N\times\mathbb{R}^N$.
 \end{itemize}
	
 \begin{Theorem}\label{thmexst}
 Suppose that $2\leq p\leq q<\infty$, $s,t\in (0,1)$ and that the coefficients satisfy the assumptions (A1), (A2) and (A3)'. Let $f\in (\mathcal W(\Omega'))^*$ and $g\in \mathcal W(\Omega')\cap L^{p-1}_{ps}(\mathbb R^N)\cap L^{q-1}_{qt,b}(\mathbb R^N)$, for $\Omega\Subset\Omega'\Subset\mathbb R^N$. Then, there exists a weak solution $u\in X_g(\Omega,\Omega')$ to the problem \eqref{genPrb}. In particular, if $g\in \mathcal W(\Omega')\cap L^{p-1}_{ps}(\mathbb R^N)\cap L^{q-1}_{qt}(\mathbb R^N)$ and $q\leq p^{*}_{s}$, then 
 \begin{equation*}
     u\in \mathcal W(\Omega')\cap L^{p-1}_{ps}(\mathbb R^N)\cap L^{q-1}_{qt}(\mathbb R^N).
 \end{equation*}
\end{Theorem}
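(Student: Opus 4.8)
\textbf{Proof plan for Theorem \ref{thmexst}.}
The plan is to recast the Dirichlet problem \eqref{genPrb} as an operator equation in the dual of a reflexive Banach space and to invoke the surjectivity theorem for bounded, coercive, pseudomonotone operators (see \cite[Chapter~II]{showalter}). First I would fix the functional setting: since $\Omega\Subset\Omega'$, the space $V:=X_{0}(\Omega,\Omega')$ is a closed subspace of $\mathcal W_{b}(\Omega')$, hence separable, uniformly convex and reflexive, and $X_{g}(\Omega,\Omega')=g+V$. Writing $u=g+w$ with $w\in V$, it suffices to find $w\in V$ solving $\widetilde{\mathcal A}(w)=f$ in $V^{*}$, where for $\phi\in V$
\begin{align*}
\langle\widetilde{\mathcal A}(w),\phi\rangle:={}&\int_{\mathbb R^{N}}\int_{\mathbb R^{N}}a\big(x,y,(g+w)(x),(g+w)(y)\big)\frac{[(g+w)(x)-(g+w)(y)]^{p-1}}{|x-y|^{N+ps}}\big(\phi(x)-\phi(y)\big)\,dxdy\\
&+\int_{\mathbb R^{N}}\int_{\mathbb R^{N}}b(x,y)\frac{[(g+w)(x)-(g+w)(y)]^{q-1}}{|x-y|^{N+qt}}\big(\phi(x)-\phi(y)\big)\,dxdy.
\end{align*}
Boundedness of $\widetilde{\mathcal A}:V\to V^{*}$ follows from (A2), H\"older's inequality and the definition of $\|\cdot\|_{\mathcal W_{b}(\Omega')}$, after splitting each double integral into the interior part over $\Omega'\times\Omega'$ and the two tail parts in which one of $\phi(x),\phi(y)$ vanishes; these are finite since $g,w\in L^{p-1}_{ps}(\mathbb R^{N})\cap L^{q-1}_{qt,b}(\mathbb R^{N})$ and because of the weight $W_{b}$ (note $[w]^{q}_{W^{t,q}_{b}(\mathbb R^{N})}=[w]^{q}_{W^{t,q}_{b}(\Omega')}+2\|w\|^{q}_{L^{q}(W_{b},\Omega')}$ and, similarly, elements of $V$ extend by zero to $W^{s,p}(\mathbb R^{N})$).

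For coercivity I would take $\phi=w$. Using $a\geq\Lambda^{-1}$, $b\geq0$ and Young's inequality to absorb the cross contributions of $g$, one obtains $\langle\widetilde{\mathcal A}(w),w\rangle\geq c_{0}\big([w]^{p}_{W^{s,p}(\mathbb R^{N})}+[w]^{q}_{W^{t,q}_{b}(\Omega')}\big)-c_{1}\big(1+[g]^{p}_{W^{s,p}}+[g]^{q}_{W^{t,q}_{b}}\big)$; and since $w$ vanishes a.e.\ outside $\Omega$ with $\Omega\Subset\Omega'$, a fractional Poincar\'e (Friedrichs) inequality bounds $\|w\|_{L^{p}(\Omega')}$ and $\|w\|_{L^{q}(W_{b},\Omega')}$ ($W_{b}$ being bounded on the support $\Omega$) in terms of these seminorms. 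Hence $\langle\widetilde{\mathcal A}(w),w\rangle/\|w\|_{\mathcal W_{b}(\Omega')}\to+\infty$ as $\|w\|_{\mathcal W_{b}(\Omega')}\to\infty$.

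The crux is the pseudomonotonicity (and demicontinuity) of $\widetilde{\mathcal A}$, which is exactly where the dependence of $a$ on the solution destroys monotonicity. Suppose $w_{n}\rightharpoonup w$ in $V$ with $\limsup_{n}\langle\widetilde{\mathcal A}(w_{n}),w_{n}-w\rangle\leq0$. By the compact embedding $\mathcal W_{b}(\Omega')\hookrightarrow\hookrightarrow L^{p}(\Omega')$ and $w_{n}=w=0$ outside $\Omega$, up to a subsequence $w_{n}\to w$ in $L^{p}(\mathbb R^{N})$ and a.e., so $u_{n}:=g+w_{n}\to u:=g+w$ a.e.; by (A3)$'$ and (A2) the frozen coefficients $a(x,y,u_{n}(x),u_{n}(y))\to a(x,y,u(x),u(y))$ a.e.\ with a uniform $L^{\infty}$ bound. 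Introduce, for $v\in V$, the auxiliary operator $\mathcal B_{n}(v)$ obtained from $\widetilde{\mathcal A}$ by replacing $a(x,y,(g+v)(x),(g+v)(y))$ with the frozen $a(x,y,u_{n}(x),u_{n}(y))$, so that $\mathcal B_{n}(w_{n})=\widetilde{\mathcal A}(w_{n})$ and $\mathcal B_{n}$ is monotone in $v$ by \eqref{eqmon}. Dominated convergence (only the $a$-term changes with $n$) gives $\mathcal B_{n}(w)\to\widetilde{\mathcal A}(w)$ strongly in $V^{*}$, hence $\langle\mathcal B_{n}(w),w_{n}-w\rangle\to0$; combined with the hypothesis this forces $\langle\mathcal B_{n}(w_{n})-\mathcal B_{n}(w),w_{n}-w\rangle\to0$. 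Since $(g+w_{n})-(g+w)=w_{n}-w$, the algebraic inequality \eqref{eqmon}, $a\geq\Lambda^{-1}$ and $b\geq0$ bound this quantity below by $c\,[w_{n}-w]^{p}_{W^{s,p}(\mathbb R^{N})}+c\,[w_{n}-w]^{q}_{W^{t,q}_{b}(\Omega')}$, and together with the compactness and the Poincar\'e inequalities this yields $w_{n}\to w$ strongly in $V$. Finally, strong convergence and the a.e.\ convergence of the ($n$-dependent) coefficients, via Vitali's convergence theorem, give $\widetilde{\mathcal A}(w_{n})\to\widetilde{\mathcal A}(w)$ in $V^{*}$, so $\langle\widetilde{\mathcal A}(w_{n}),w_{n}-v\rangle\to\langle\widetilde{\mathcal A}(w),w-v\rangle$ for every $v\in V$. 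The classical surjectivity theorem then provides $w\in V$ with $\widetilde{\mathcal A}(w)=f$ in $V^{*}$, and $u=g+w\in X_{g}(\Omega,\Omega')$ is the desired weak solution.

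For the last assertion, assume in addition $g\in L^{q-1}_{qt}(\mathbb R^{N})$ and $q\leq p^{*}_{s}$. Fixing a ball $B$ with $\overline{\Omega}\subset B\Subset\Omega'$, the fractional Sobolev embedding $W^{s,p}(B)\hookrightarrow L^{p^{*}_{s}}(B)$ gives $u\in L^{p^{*}_{s}}(\Omega)$, so $u|_{\Omega}\in L^{q-1}(\Omega)\subset L^{p-1}(\Omega)$ because $p-1\leq q-1<q\leq p^{*}_{s}$ and $\Omega$ is bounded; splitting the tail integrals over $\Omega$ and $\mathbb R^{N}\setminus\Omega$ (on which $u=g$) shows $u\in L^{p-1}_{ps}(\mathbb R^{N})\cap L^{q-1}_{qt}(\mathbb R^{N})$, while $u\in\mathcal W(\Omega')$ holds by construction. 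The main obstacle is precisely the pseudomonotonicity step: the non-monotone dependence of $a$ on the solution forces the freezing device above, and one must simultaneously keep careful track of the nonlocal tail terms so that every duality pairing is finite and the passage to the limit under the integral sign is legitimate (Vitali's, rather than the naive dominated convergence theorem, since the frozen difference operators vary with $n$).
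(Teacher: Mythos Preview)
Your proposal is correct and follows the same overall strategy as the paper (reduce to a surjectivity theorem in \cite[Chapter~II]{showalter} after verifying boundedness and coercivity of the translated operator on $V=X_{0}(\Omega,\Omega')$), but the key step---how to deal with the non-monotone dependence of $a$ on the solution---is handled differently. The paper shows directly that $\mathcal A$ is \emph{weakly continuous}: if $u_k\rightharpoonup u$ in $V$ then $\langle\mathcal A(u_k),\phi\rangle\to\langle\mathcal A(u),\phi\rangle$ for every fixed $\phi$, using compact embedding for a.e.\ convergence, (A3)$'$ for the coefficient terms, and a weak-$L^{p'}$/Vitali argument for the power terms; this immediately gives the $M$-type property via \cite[Example~2.A]{showalter}. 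You instead verify pseudomonotonicity by a freezing device: you introduce the monotone auxiliary operators $\mathcal B_n$ with coefficient $a(\cdot,\cdot,u_n,u_n)$, show $\mathcal B_n(w)\to\widetilde{\mathcal A}(w)$ strongly in $V^*$ by dominated convergence, and then exploit \eqref{eqmon} to upgrade weak convergence plus the $\limsup$-hypothesis to \emph{strong} convergence $w_n\to w$ in $V$. Your route is a little longer but yields the extra $(S_+)$-type information (strong convergence of approximating sequences), which can be useful elsewhere; the paper's route is shorter since weak continuity is the cheapest sufficient condition for being of $M$-type. Two small points to make explicit in a full write-up: the subsequence argument in your pseudomonotonicity step should be closed by the standard ``every subsequence has a further subsequence'' trick to get the conclusion for the full sequence, and the strong convergence $\mathcal B_n(w)\to\widetilde{\mathcal A}(w)$ in $V^*$ should be stated uniformly in $\|\phi\|_V\le1$ (which your H\"older/DCT argument indeed gives once you split into interior and tail parts as you indicate).
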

 \begin{proof}
We see that as in the proof of \cite[Lemma 2.11]{brasco}, we find that the space $X_0(\Omega,\Omega')$ is continuously embedded into $\mathcal{W}(\Omega')$. Moreover, we can directly verify that $X_0(\Omega,\Omega')$ is a separable uniformly convex Banach space. We now define a functional $\mathcal A: X_{0}(\Omega,\Omega')\to (\mathcal W(\Omega'))^*$ by  \begin{align*}
	\mathcal A:= \mathcal A_p+\mathcal A_q, 
 \end{align*}
	where 
	\begin{align*}
	 \langle \mathcal A_p(v),\phi\rangle &=
	  \int_{\Omega'}\int_{\Omega'} a(x,y,v(x),v(y)) \frac{[v(x)+g(x)-v(y)-g(y)]^{p-1}}{|x-y|^{N+ps}}(\phi(x)-\phi(y))dxdy \nonumber\\
	  &\quad+2 \int_{\mathbb R^N\setminus\Omega'}\int_{\Omega} a(x,y,v(x),g(y)) \frac{[v(x)+g(x)-g(y)]^{p-1}}{|x-y|^{N+ps}}\phi(x)dxdy \nonumber\\
	  &=:\langle \mathcal A^1_p(v),\phi\rangle + \langle \mathcal A^2_p(v),\phi\rangle \quad\mbox{for all }\phi\in \mathcal{W}(\Omega')
 \end{align*}
	and $\mathcal A_q$ is defined analogously.
 By virtue of H\"older's inequality and recalling the definition of $W$ (as stated in \eqref{eqw}), we obtain
	\begin{align}\label{eq121}
	  &|\langle \mathcal  A_q(v),\phi\rangle| \nonumber\\ 
	  &\leq \int_{\Omega'}\int_{\Omega'} b(x,y) \frac{|v(x)+g(x)-v(y)-g(y)|^{q-1}}{|x-y|^{N+qt}}|\phi(x)-\phi(y)|dxdy \nonumber\\
	  &\quad +c(q)\int_{\mathbb R^N\setminus\Omega'} \int_{\Omega} b(x,y) \frac{|v(x)+g(x)|^{q-1}+|g(y)|^{q-1}}{|x-y|^{N+qt}}|\phi(x)|dxdy  \nonumber\\
	  &\leq c \big([v]^{q-1}_{W^{t,q}_{b}(\Omega')} + [g]^{q-1}_{W^{t,q}_{b}(\Omega')} \big) [\phi]_{W^{t,q}_{b}(\Omega')}
	  + c\int_{\Omega'}W(x)|v(x)+g(x)|^{q-1}|\phi(x)|dx\nonumber\\
	  &\quad +c\int_{\Omega}|\phi(x)|\int_{\mathbb R^N\setminus\Omega'} b(x,y)\frac{|g(y)|^{q-1}}{|x-y|^{N+qt}}dydx \nonumber\\
	  &\leq c \big([v]^{q-1}_{W^{t,q}_{b}(\Omega')} + [g]^{q-1}_{W^{t,q}_{b}(\Omega')} \big) [\phi]_{W^{t,q}_{b}(\Omega')} \nonumber\\
	  &\quad+c\Big(\int_{\Omega'}W(x)|(v+g)(x)|^{q}dx \Big)^\frac{1}{q'} \Big(\int_{\Omega'}W(x)|\phi(x)|^{q}dx \Big)^\frac{1}{q} \nonumber \\ 
	   &\quad+ c\Big(\int_{\Omega}|\phi(x)|^{p}dx \Big)^\frac{1}{p}   \left(\sup_{x\in\mathbb{R}^{N}}\int_{\mathbb{R}^{N}}b(x,y)\frac{|g(y)|^{q-1}}{(1+|y|)^{N+tq}}dy\right)\nonumber\\
	  &\leq c\big(\|v\|^{q-1}_{\mathcal W(\Omega')}+\|g\|^{q-1}_{\mathcal W(\Omega')}+ \|g\|^{q-1}_{L^{q-1}_{qt,b}(\mathbb{R}^{N})}\big) \|\phi\|_{\mathcal W(\Omega')},
		\end{align}
 where $c=c(\mathsf{data}_2,\mathrm{dist}(\Omega,\Omega'))$.   A similar result holds for the $p$-term, too, by simply using the bound \eqref{eqbdA}. 
 Consequently, we get that $\mathcal A$ is a well defined operator.
 Moreover, \eqref{eq121} together with its $p$-counterpart shows that the operator $\mathcal{A}$ is bounded; i.e., it maps bounded sets to bounded sets. We next prove that $\mathcal{A}$ is weakly continuous. For this, let $\{u_k\}\subset X_0(\Omega,\Omega')$ be a sequence such that $u_k\rightharpoonup u$, weakly in $\mathcal{W}(\Omega')$ for some $u\in X_0(\Omega,\Omega')$. Then, we claim that
	\begin{align*}
	 \lim_{k\to\infty} \langle \mathcal{A}(u_k), \phi \rangle = \langle \mathcal A (u), \phi \rangle \quad\mbox{for all }\phi\in X_0(\Omega,\Omega'). 
	 \end{align*}
 Using the bound on the function $a$, we observe that 
	\begin{align}\label{eq exst cont est}
	  &|\langle \mathcal{A}(u_k)-\mathcal{A}(u), \phi \rangle| \nonumber\\
	  &\leq  \int_{\Omega'}\int_{\Omega'}  \big| a(x,y,u_k(x),u_k(y))-a(x,y,u(x),u(y))\big| |(u+g)(x)-(u+g)(y)|^{p-1} \nonumber\\ &\qquad\qquad\times|\phi(x)-\phi(y)|d\mu_{1} \nonumber\\
	  &\ + 2\int_{\Omega}\int_{\mathbb R^N\setminus\Omega'}  \big| a(x,y,u_k(x),g(y))-a(x,y,u(x),g(y))\big| |u(x)+g(x)-g(y)|^{p-1}  |\phi(x)|d\mu_{1} \nonumber\\
	  & \ + \Lambda
	  \int_{\Omega'}\int_{\Omega'}  \big| [(u_k+g)(x)-(u_k+g)(y)]^{p-1}-[(u+g)(x)-(u+g)(y)]^{p-1} \big| |\phi(x)-\phi(y)|d\mu_{1} \nonumber\\
	  & \ + 2\Lambda
	  \int_{\Omega}\int_{\mathbb R^N\setminus\Omega'}  \big| [(u_k+g)(x)-g(y)]^{p-1}-[(u+g)(x)-g(y)]^{p-1} \big| |\phi(x)|d\mu_{1} \nonumber\\
	  & \ + |\langle \mathcal{A}_q^1(u_k)-\mathcal{A}_q^1(u), \phi \rangle|+ |\langle \mathcal{A}_q^2(u_k)-\mathcal{A}_q^2(u), \phi \rangle|.
		\end{align}
By the definition of $X_0(\Omega,\Omega')$ together with  the weak convergence and compactness of the Sobolev embedding, we infer that, up to a subsequence,  $u_k(x)\to u(x)$ a.e.  in $\Omega'$. Therefore, using the uniform continuity condition of \eqref{eq140}, we deduce that the first two terms on the right-hand side of \eqref{eq exst cont est} converge to $0$, as $k\to\infty$. 
To prove the convergence of the third term, on the contrary, we assume that there exist $\epsilon_0>0$ and a subsequence $\{u_k\}$ (up to relabelling) such that 
\begin{align}\label{eq 1weak fail}
   \int_{\Omega'}\int_{\Omega'}  \big| [(u_k+g)(x)-(u_k+g)(y)]^{p-1}-[(u+g)(x)-(u+g)(y)]^{p-1} \big| |\phi(x)-\phi(y)|d\mu_{1}\geq \epsilon_0. 
\end{align}
Since $\{u_k\}$ is bounded in $\mathcal{W}(\Omega')$, using the definition of the norm on $\mathcal{W}(\Omega')$, we observe that the sequence $\bigg\{\frac{[(u_k+g)(x)-(u_k+g)(y)]^{p-1}}{|x-y|^\frac{N+ps}{p'}}\bigg\}$ is bounded in $L^{p'}(\Omega'\times\Omega')$. Therefore, by the reflexivity of the space $L^{p'}$ and the pointwise convergence $u_k\to u$ a.e. in $\Omega'$, up to a subsequence (again up to relabelling), we get that 
 \begin{align*}
    \frac{[(u_k+g)(x)-(u_k+g)(y)]^{p-1}}{|x-y|^\frac{N+ps}{p'}} \rightharpoonup \frac{[(u+g)(x)-(u+g)(y)]^{p-1}}{|x-y|^\frac{N+ps}{p'}} \quad\mbox{weakly in } L^{p'}(\Omega'\times\Omega'),
 \end{align*}
 as $k\to\infty$. Owing to the fact $\frac{|\phi(x)-\phi(y)|}{|x-y|^\frac{N+ps}{p}}\in L^p(\Omega'\times\Omega')$ (due to  $\phi\in\mathcal{W}(\Omega')$), we find a contradiction to \eqref{eq 1weak fail}. Therefore, the third term on the right-hand side of \eqref{eq exst cont est} converges to $0$, as $k\to\infty$. Similarly, for the fifth term, we note that the sequence $\bigg\{b(x,y)^\frac{1}{q'}\frac{[(u_k+g)(x)-(u_k+g)(y)]^{q-1}}{|x-y|^\frac{N+qt}{q'}}\bigg\}$ is bounded in $L^{q'}(\Omega'\times\Omega')$. Therefore, by the reflexivity of the space $L^{q'}$ and proceeding as above by noting that $b(x,y)^\frac{1}{q}\frac{|\phi(x)-\phi(y)|}{|x-y|^\frac{N+qt}{q}}\in L^q(\Omega'\times\Omega')$, we get that the fifth term also converges to $0$. It remains to prove the convergence of the fourth and the sixth terms on the right-hand side of \eqref{eq exst cont est}. Using \eqref{eqKKP2} and H\"older's inequality, we deduce that
 \begin{align*}
     &\int_{\Omega}\int_{\mathbb R^N\setminus\Omega'} b(x,y)  \big| [(u_k+g)(x)-g(y)]^{q-1}-[(u+g)(x)-g(y)]^{q-1} \big| |\phi(x)|d\mu_{2} \\
     &\leq c\int_{\Omega'} |u_k(x)-u(x)|^{q-1}|\phi(x)| \int_{\mathbb R^N\setminus\Omega'} \frac{b(x,y)}{|x-y|^{N+qt}}dydx \\
     & \ \ + c  \int_{\Omega}\int_{\mathbb R^N\setminus\Omega'}b(x,y) |\phi(x)||u_k(x)-u(x)||u(x)+g(x)-g(y)|^{q-2}d\mu_2\\
     &\leq c\int_{\Omega'} W(x) |u_k(x)-u(x)|^{q-1}|\phi(x)|dx  + c \Big( \int_{\Omega'} W(x) |u_k(x)-u(x)|^{q-1}|\phi(x)|dx\Big)^\frac{1}{q-1} \\
     &\qquad\times\Big( \int_{\Omega} \int_{\mathbb R^N\setminus\Omega'}b(x,y)|u(x)+g(x)-g(y)|^{q-1}|\phi(x)|d\mu_2\Big)^\frac{q-2}{q-1},
 \end{align*}
where $W$ is as defined in \eqref{eqw} with $\Omega'$ in place of $\Omega$. From \eqref{eq121}, we see that the second quantity on the right-hand side of the second term is finite. Then,  recalling the definition of the norm on $\mathcal{W}(\Omega')$ and arguing as above (the case of fifth term), we get that the sixth term on right-hand side of \eqref{eq exst cont est} converges to $0$. Similarly, we see that the fourth term on the right-hand side of \eqref{eq exst cont est} tends to $0$. Hence, we prove the claim.
\par Next, to prove coercivity of the operator $\mathcal{A}$,  for any $v\in X_0(\Omega,\Omega')$, using H\"older's and Young's inequalities, we first see that
 \begin{align}\label{eq114}
 &\langle \mathcal A_p^1(v),v\rangle \nonumber\\
 &=
\int_{\Omega'}\int_{\Omega'} a(x,y,v(x),v(y))   \Big([(v+g)(x)-(v+g)(y)]^{p-1}-[g(x)-g(y)]^{p-1} \Big) (v(x)-v(y))d\mu_{1} \nonumber\\
& \ +\int_{\Omega'}\int_{\Omega'}   a(x,y,v(x),v(y)) [g(x)-g(y)]^{p-1}  (v(x)-v(y))d\mu_{1} \nonumber\\
&\geq \frac{1}{c} \int_{\Omega'}\int_{\Omega'} |v(x)-v(y)|^p d\mu_1 - c \int_{\Omega'}\int_{\Omega'}   |g(x)-g(y)|^{p-1}  |v(x)-v(y)|d\mu_{1} \nonumber\\
&\geq \frac{1}{c}\int_{\Omega'}\int_{\Omega'} |v(x)-v(y)|^p d\mu_1 - c \int_{\Omega'}\int_{\Omega'}   |g(x)-g(y)|^{p}  d\mu_{1},
 \end{align}
 where we have also used \eqref{eqbdA} and \eqref{eqmon}. Similarly, we discover that 
 \begin{align}\label{eq115}
 &\langle \mathcal A_q^1(v),v\rangle \geq  \frac{1}{c}\int_{\Omega'}\int_{\Omega'} b(x,y)|v(x)-v(y)|^q d\mu_2 - c\int_{\Omega'}\int_{\Omega'}b(x,y)   |g(x)-g(y)|^{q} d\mu_{2}.
 \end{align}
Furthermore, using the inequality \eqref{eqmon} once again and the definition of $W$, we observe that 
	\begin{align}\label{eq116}
	 \langle  \mathcal A_q^2(v),v\rangle 
	 &= \int_{\Omega}\int_{\mathbb R^N\setminus\Omega'} b(x,y) \Big([v(x)+g(x)-g(y)]^{q-1}-[g(x)-g(y)]^{q-1}\Big)v(x)d\mu_2 \nonumber\\
	 & \ + \int_{\Omega}\int_{\mathbb R^N\setminus\Omega'} b(x,y) [g(x)-g(y)]^{q-1}v(x)d\mu_2 \nonumber\\
	 &\geq \frac{1}{c} \int_{\Omega}\int_{\mathbb R^N\setminus\Omega'} |v(x)|^{q} \frac{b(x,y)}{|x-y|^{N+qt}}dxdy \nonumber\\
	 & \ - c \int_{\Omega}\int_{\mathbb R^N\setminus\Omega'}  (|g(x)|^{q-1}+|g(y)|^{q-1})|v(x)|\frac{b(x,y)}{|x-y|^{N+qt}}dxdy \nonumber\\
	 &\geq \frac{1}{c} \int_{\Omega'} W(x)|v(x)|^q dx-c\Big(\int_{\Omega'}W(x)|g(x)|^{q}dx \Big)^\frac{1}{q'} \Big(\int_{\Omega'}W(x)|v(x)|^{q}dx \Big)^\frac{1}{q} \nonumber\\ 
	& \ - c \Big(\int_{\Omega}|v(x)|dx \Big)  \|g\|^{q-1}_{L^{q-1}_{qt,b}(\mathbb{R}^{N})} \nonumber\\
	&\geq \frac{1}{c} \int_{\Omega'} W(x)|v(x)|^q dx-c\int_{\Omega'}W(x)|g(x)|^{q}dx -\epsilon\int_{\Omega}|v(x)|^pdx\nonumber\\
	&\quad
	 - c\epsilon^{\frac{-1}{p-1}} \|g\|^{p'(q-1)}_{L^{q-1}_{qt,b}(\mathbb{R}^{N})},
		\end{align}
 where we have also used the fact that $v=0$ in $\Omega'\setminus\Omega$ and Young's inequality on the last line. On a similar note,
 \begin{align}\label{eq117}
 \langle  \mathcal A_p^2(v),v\rangle 
	 &\geq \frac{1}{2c} \int_{\Omega'} |v(x)|^p dx -c\int_{\Omega'}|g(x)|^{p}dx 
 -  c \|g\|^{p}_{L^{p-1}_{ps}(\mathbb{R}^{N})}.
 \end{align}
Finally, combining \eqref{eq114}, \eqref{eq115}, \eqref{eq116} and \eqref{eq117} with $\epsilon=\frac{1}{4c}$, and recalling  the definition of the norm on $\mathcal{W}(\Omega')$, we obtain
	\begin{align*}
	 \langle  \mathcal A(v),v\rangle &\geq \frac{1}{4c} \min\{ \|v\|_{\mathcal{W}(\Omega')}^{p}, \|v\|_{\mathcal{W}(\Omega')}^{q} \} -c \|g\|_{W^{s,p}(\Omega')}^{p}-c\|g\|_{W^{t,q}_{b}(\Omega')}^{q} \nonumber\\
	 & \ -c (\|g\|^{p}_{L^{p-1}_{ps}(\mathbb{R}^{N})}+\|g\|^{p'(q-1)}_{L^{q-1}_{qt,b}(\mathbb{R}^{N})}),
	\end{align*}
 where the constant $c$ depends only on $\mathsf{data}$, $\Omega$ and $\Omega'$. This proves the coercivity of the operator $\mathcal{A}$. \par 
 Consequently, by  \cite[Chap. II, Example 2.A, p. 40]{showalter}, it follows that the operator $\mathcal{A}$ is of $M$-type. Note that $X_0(\Omega,\Omega')$ is a separable reflexive Banach space and $(\mathcal{W}(\Omega'))^*\subset(X_0(\Omega,\Omega'))^*$. Hence, using \cite[Chap. II, Corollary 2.2, p. 39]{showalter}, we get that the map $\mathcal{A}$ is surjective. Moreover, the last statement is true considering $u\in L^{q-1}(\Omega)$ by $q\leq p_{s}^{*}$. This completes the proof of the theorem. \QED 
\end{proof}

\appendix
\section{Boundedness results}
We first give a boundedness result for the problem \eqref{probM} whose proof runs along the same lines of \cite[Proposition 3.1]{GKS2} by using the Caccioppoli estimate of Lemma \ref{Caccio estimate}. 
\begin{Lemma}
\label{boundedness1}
Suppose that  $q< p^*_{s}$ and $qt\leq ps$.  Let $u$ be a local weak solution to the problem \eqref{probM} in $\Omega$.  Then,  there exists a constant $c$ depending only on $\mathsf{data}$ and $\gamma$ (if $\gamma<\infty$) such that
\begin{equation*}
\begin{aligned}
\|u\|_{L^{\infty}(B_{r}(x_{0}))}\leq& c\Bigg(\left(\Xint-_{B_{2r}(x_{0})}|u(x)|^{\vartheta}dx\right)^{q/(p\vartheta)}+\|f\|^{1/(p-1)}_{L^{\gamma}(B_{2r}(x_{0}))}\\
&\quad+T_{ps}(u;x_0,2r)+T_{qt,b}(u;x_0,2r)+1\Bigg),
\end{aligned}
\end{equation*}
provided $B_{2r}(x_{0})\Subset \Omega$, where $\vartheta=\max\{q,p\varsigma\}$ with $\varsigma=\frac{p\gamma-(p^*_{s})'}{p(\gamma-(p^*_{s})')} \Big(<\frac{p^*_{s}}{p}\Big)$ if $\gamma<\infty$, while $\varsigma=1$ if $\gamma=\infty$.
\end{Lemma}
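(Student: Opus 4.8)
The plan is a De~Giorgi iteration on super-level sets, fuelled by the Caccioppoli estimate of Lemma~\ref{Caccio estimate} and the fractional Sobolev embedding $W^{s,p}\hookrightarrow L^{p^{*}_{s}}$. After a translation and a standard scaling we may assume $x_{0}=0$, and by a further dilation we may assume $r$ as small as needed, so that the radii appearing below lie in the admissible range of Lemma~\ref{Caccio estimate}; replacing $u$ by $-u$ at the end, it suffices to bound $\sup_{B_{r}}u$ from above. Fix $k\geq 1$ to be chosen, and for $j\geq 0$ set $k_{j}=k\bigl(1-2^{-j-1}\bigr)$, $r_{j}=r\bigl(1+2^{-j}\bigr)$, $B_{j}=B_{r_{j}}(0)$, and $w_{j}=(u-k_{j})_{+}$; pick cutoffs $\psi_{j}\in C_{c}^{\infty}(B_{j})$ with $\psi_{j}\equiv 1$ on $B_{j+1}$ and $|\nabla\psi_{j}|\leq c\,2^{j}/r$.

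The first step is to apply Lemma~\ref{Caccio estimate} to $w_{j}$ on the pair $B_{j+1}\Subset B_{j}$. Because the $q$-energy term on the left carries the weight $b$ and $qt\leq ps$, after using $|x-y|\lesssim 1$ one controls the $W^{t,q}_{b}$-part from below, while the corresponding term on the right is absorbed into a power of $\Xint-_{B_{j}}w_{j}^{p}$ plus $\Xint-_{B_{j}}w_{j}^{q}$; it is here that the exponent $\vartheta=\max\{q,p\varsigma\}$ is forced. The nonlocal tails produced by Lemma~\ref{Caccio estimate} are handled by writing $u=w_{j}+(u-w_{j})$ inside each tail: since $B_{j}\setminus B_{r}\subset\mathbb{R}^{N}\setminus B_{r}$ and $|x_{0}-y|\geq c\,2^{-j}|x_{0}-y|$ relative to $\mathbb{R}^{N}\setminus B_{r}$, the $w_{j}$-part contributes $c^{j}\bigl(T_{ps}(u;0,2r)^{p-1}+T_{qt,b}(u;0,2r)^{q-1}\bigr)$, whereas the bounded part $0\leq u-w_{j}\leq k_{j}\leq k$ contributes only $c^{j}k^{p-1}T_{ps}(1;0,2r)+c^{j}k^{q-1}T_{qt,b}(1;0,2r)\lesssim c^{j}k^{q-1}$. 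Finally, the forcing term is treated by H\"older's inequality with exponent $\gamma$ together with Sobolev's embedding exactly as in \eqref{Holder ineq}, producing $\|f\|_{L^{\gamma}(B_{2r})}^{p'}$ up to the $\varsigma$-related exponent; the condition $\gamma>\max\{1,N/(ps)\}$ is precisely what makes the resulting exponent subcritical.

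Combining these estimates with the Sobolev embedding on $B_{j+1}$ and then with H\"older's inequality on the level set $\{u>k_{j+1}\}\cap B_{j+1}$, whose measure is bounded by $c\,2^{jp}k^{-p}\Xint-_{B_{j}}w_{j}^{p}$, yields a recursion of the standard form
\begin{equation*}
Y_{j+1}\leq C\,b^{\,j}\,k^{-\chi}\,Y_{j}^{1+\beta},\qquad Y_{j}:=\Xint-_{B_{j}}w_{j}^{\vartheta}\,dx,
\end{equation*}
with $b>1$, $\beta>0$ (coming from $q<p^{*}_{s}$, which keeps the gain of integrability strictly positive) and $\chi>0$ (coming from $\gamma>N/(ps)$ and $k\geq1$), all depending only on $\mathsf{data}$ and $\gamma$. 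By the fast-geometric-convergence lemma, $Y_{j}\to 0$ as soon as $Y_{0}$ lies below an explicit threshold, which is guaranteed by choosing
\begin{equation*}
k\simeq\Big(\Xint-_{B_{2r}(x_{0})}|u|^{\vartheta}\,dx\Big)^{q/(p\vartheta)}+\|f\|_{L^{\gamma}(B_{2r}(x_{0}))}^{1/(p-1)}+T_{ps}(u;x_{0},2r)+T_{qt,b}(u;x_{0},2r)+1.
\end{equation*}
Then $(u-k)_{+}=0$ a.e.\ in $B_{r}(x_{0})$, i.e.\ $\sup_{B_{r}(x_{0})}u\leq k$; applying the same argument to $-u$ and adding gives the claimed bound.

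The main obstacle is the bookkeeping of the two nonlocal tail contributions under the De~Giorgi scaling: one has to verify that, after inserting the truncations, only the ``global'' tails $T_{ps}(u;x_{0},2r)$ and $T_{qt,b}(u;x_{0},2r)$ survive (the geometric factors $c^{j}$ being absorbed into $b^{j}$), together with harmless powers of $k$, and that the $q$-tail term—which a priori sits at a different scaling—is dominated using $qt\leq ps$ together with $|x-y|\lesssim1$. Apart from this, the scheme is a routine adaptation of \cite[Proposition 3.1]{GKS2}, now relying on the Caccioppoli inequality of Lemma~\ref{Caccio estimate} to absorb the $[u]_{W^{s,p}}$-type terms that appear.
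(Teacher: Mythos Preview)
Your proposal is correct and follows exactly the approach the paper indicates: a De~Giorgi iteration in the style of \cite[Proposition~3.1]{GKS2}, driven by the Caccioppoli-type estimate of Lemma~\ref{Caccio estimate} (more precisely, its natural variant for the truncations $(u-k_{j})_{+}$, obtained by testing \eqref{probM} with $\psi_{j}^{q}(u-k_{j})_{+}$ rather than with $\psi^{q}u$). The paper itself gives no details beyond that reference, so your outline is in fact more explicit than the paper's own treatment.
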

Let $B_{\rho_{0}}\equiv B_{\rho_{0}}(0)$.
We next consider the following problem: 
\begin{equation*}
		\left\{ \begin{array}{llr}
		\mathcal L_{a(\cdot,v),b}v=f &\mbox{in }B_{3\rho_0/2}, \\ 
		v=g &\mbox{in }\mathbb R^N\setminus B_{3\rho_0/2},
			\end{array}
		\right. \tag{$\mathcal{P}_b$}\label{probb}
		\end{equation*}
where $g\in \mathcal{W}_b(B_{2\rho_0})\cap  L^{\infty}(B_{2\rho_{0}})\cap L^{p-1}_{ps}(\mathbb R^N)\cap L^{q-1}_{qt,b}(\mathbb R^N)$ and $f\in L_{\mathrm{loc}}^{\gamma}(B_{2\rho_{0}})$ with $\gamma>\max\{1,{N/(ps)}\}$. Let $v\in X_{g,b}(B_{3\rho_{0}/2},B_{2\rho_{0}})$ be a weak solution to the problem \eqref{probb}. Then, 
 $v$ enjoys the same Caccioppoli-type estimate as in Lemma \ref{Caccio estimate} and hence Lemma \ref{boundedness1} holds for $v$, too. 
We next see the boundary estimate of the solution $v$. Precisely, we have the following estimate using \cite[Theorem 5]{KKP} and \cite[Proposition 3.1]{GKS2} with slight modifications.
\begin{Lemma}
\label{boundedness2}
Suppose that  $q< p^*_{s}$ and $qt\leq ps$.  Let $v\in X_{g,b}(B_{3\rho_{0}/2},B_{2\rho_{0}})$ be a weak solution to \eqref{probb} with $f\in L^{\gamma}(B_{2r}(x_{0})\cap B_{3\rho_{0}/2})$ and $g\in L^{\infty}(B_{2\rho_{0}})$, for some $r\in(0,1/16)$ and $x_{0}\in \partial B_{3\rho_{0}/2}$. Then there is a constant c depending only on $\mathsf{data}$ and $\gamma$ (if $\gamma<\infty$)  such that 
\begin{equation*}
\begin{aligned}
\|v\|_{L^{\infty}(B_{r}(x_{0}))}\leq& c\Bigg(\left(\Xint-_{B_{2r}(x_{0})}|v(x)|^{\vartheta}dx\right)^{q/(p\vartheta)}+\|f\|^{1/(p-1)}_{L^{\gamma}(B_{2r}(x_{0})\cap B_{3\rho_{0}/2})}\\
&\quad+T_{ps}(v;x_0,2r)+T_{qt,b}(v;x_0,2r)+\|g\|_{L^{\infty}(B_{2\rho_{0}})}+1\Bigg),
\end{aligned}
\end{equation*}
where $\vartheta$ is the same number as in Lemma \ref{boundedness1}.
\end{Lemma}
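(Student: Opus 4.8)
\textbf{Proof of Lemma~\ref{boundedness2} (sketch).}
The plan is to carry out a De Giorgi--type truncation iteration localized at the boundary point $x_{0}\in\partial B_{3\rho_{0}/2}$, following the interior argument of \cite[Proposition~3.1]{GKS2}, which uses only the Caccioppoli inequality and therefore applies verbatim to $v$, the $v$--dependence of $a$ entering solely through the ellipticity bounds \eqref{eqbdA}. The one genuinely new feature is that the exterior datum $g$ is bounded, and this is exactly what makes the iteration close up to the boundary: once a truncation level exceeds $\|g\|_{L^{\infty}(B_{2\rho_{0}})}$, the corresponding truncated function vanishes identically on $\mathbb{R}^{N}\setminus B_{3\rho_{0}/2}$, so the contribution of that set to both the Caccioppoli energy and the nonlocal tail disappears. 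After the standard scaling that normalizes $2r$ and the ellipticity constants, and using the monotonicity of the asserted bound in $\|g\|_{L^{\infty}}$, it suffices to prove $v\le k_{0}+M$ in $B_{r}(x_{0})$, where $k_{0}:=\|g\|_{L^{\infty}(B_{2\rho_{0}})}$ and $M$ is a universal multiple of the right--hand side of the claimed estimate.

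First I would fix $r_{j}:=r(1+2^{-j})$, $k_{j}:=k_{0}+M(1-2^{-j})$, $w_{j}:=(v-k_{j})_{+}$, and cutoffs $\psi_{j}\in C_{c}^{\infty}(B_{r_{j}}(x_{0}))$ with $\psi_{j}\equiv1$ on $B_{r_{j+1}}(x_{0})$ and $|\nabla\psi_{j}|\lesssim 2^{j}/r$. Since $v=g$ a.e.\ outside $B_{3\rho_{0}/2}$ and $k_{j}\ge k_{0}$, the function $w_{j}$ is supported in $B_{r_{j}}(x_{0})\cap B_{3\rho_{0}/2}$, so $w_{j}\psi_{j}^{q}$ is an admissible test function. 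Testing \eqref{probb} with it and using \eqref{eqbdA}, \eqref{eqbdB} and \eqref{eqmon} yields the Caccioppoli--type inequality of Lemma~\ref{Caccio estimate} for $w_{j}$: the fractional energies $[\psi_{j}^{q/p}w_{j}]_{W^{s,p}}^{p}+[\psi_{j}w_{j}]^{q}_{W^{t,q}_{b}}$ are controlled by the localized $L^{p}$ and $L^{q}$ norms of $w_{j}$, by $\|f\|^{p'}_{L^{\gamma}(B_{2r}(x_{0})\cap B_{3\rho_{0}/2})}$, and by two tail integrals over $\mathbb{R}^{N}\setminus B_{r_{j}}(x_{0})$. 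I would then split each tail into the part inside $B_{3\rho_{0}/2}$ --- where $w_{j}\le w_{j-1}$ and one gains a factor from $k_{j}-k_{j-1}=M2^{-j}$, so it is absorbed by the iteration --- and the part outside, on which $w_{j}\equiv0$ and which therefore contributes nothing. Using $qt\le ps$ to dominate the $q$--phase energy by the $p$--phase energy on the unit scale (as in Lemma~\ref{Caccio estimate}) and the Sobolev embedding $W^{s,p}\hookrightarrow L^{p_{s}^{*}}$ together with $q<p_{s}^{*}$, one upgrades this into a closed recursion
\[
  Y_{j+1}\ \le\ C\,b^{\,j}\,Y_{j}^{1+\beta},\qquad Y_{j}:=\dashint_{B_{r_{j}}(x_{0})}|w_{j}|^{\vartheta}\,dx,
\]
with $\vartheta$ as in Lemma~\ref{boundedness1} and $C\ge1$, $b>1$, $\beta>0$ depending only on $\mathsf{data}$ and $\gamma$.

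By the usual fast--geometric--convergence lemma, $Y_{j}\to0$ as soon as $Y_{0}=\dashint_{B_{2r}(x_{0})}|(v-k_{0})_{+}|^{\vartheta}\,dx$ lies below a universal threshold; this is arranged by choosing $M$ to be a suitable universal multiple of $\big(\dashint_{B_{2r}(x_{0})}|v|^{\vartheta}\,dx\big)^{q/(p\vartheta)}+\|f\|^{1/(p-1)}_{L^{\gamma}(B_{2r}(x_{0})\cap B_{3\rho_{0}/2})}+T_{ps}(v;x_{0},2r)+T_{qt,b}(v;x_{0},2r)+\|g\|_{L^{\infty}(B_{2\rho_{0}})}+1$. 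Then $(v-k_{0}-M)_{+}=0$ a.e.\ in $B_{r}(x_{0})$, i.e.\ $v\le k_{0}+M$ there; applying the same argument to $-v$ with exterior datum $-g$ gives the matching lower bound, and scaling back yields the assertion. The main obstacle --- the only place where the argument genuinely departs from the interior case \cite[Proposition~3.1]{GKS2} --- is the bookkeeping of the two nonlocal tails across $\partial B_{3\rho_{0}/2}$: one must verify that the piece of each tail integral supported in $B_{2r}(x_{0})\setminus B_{3\rho_{0}/2}$ contributes nothing, which holds precisely because $w_{j}$ vanishes there once the level passes $\|g\|_{L^{\infty}(B_{2\rho_{0}})}$. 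This is the analogue of the boundary tail estimate in \cite[Theorem~5]{KKP}, and it is where the boundedness of $g$ is used in an essential way.
\QED
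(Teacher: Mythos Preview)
Your sketch is correct and is precisely the approach the paper indicates (the paper gives no proof, citing only \cite[Theorem~5]{KKP} and \cite[Proposition~3.1]{GKS2} ``with slight modifications''): the key boundary modification you identify---that $k_j\ge\|g\|_{L^\infty(B_{2\rho_0})}$ forces $w_j=(v-k_j)_+$ to vanish on $B_{2r}(x_0)\setminus B_{3\rho_0/2}$, so that $w_j\psi_j^q\in X_0(B_{3\rho_0/2},B_{2\rho_0})$ is an admissible test function---is exactly the mechanism behind \cite[Theorem~5]{KKP}. One small correction to your tail bookkeeping: $w_j$ need not vanish on \emph{all} of $\mathbb{R}^N\setminus B_{3\rho_0/2}$, since $g$ is only assumed bounded on $B_{2\rho_0}$; but this does not matter, because the tail integral over $\mathbb{R}^N\setminus B_{2r}(x_0)$ is in any case controlled by $T_{ps}(v;x_0,2r)+T_{qt,b}(v;x_0,2r)$, which you have already absorbed into $M$.
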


\medskip
\providecommand{\bysame}{\leavevmode\hbox to3em{\hrulefill}\thinspace}
\providecommand{\MR}{\relax\ifhmode\unskip\space\fi MR }
\providecommand{\MRhref}[2]{%
	\href{http://www.ams.org/mathscinet-getitem?mr=#1}{#2}
}
\providecommand{\href}[2]{#2}

\end{document}